\newcommand*{\mailto}[1]{\href{mailto:#1}{\nolinkurl{#1}}}
\newcommand{\C}{{\mathbb C}}
\newcommand{\bbR}{{\mathbb{R}}}
\DeclareMathOperator{\ran}{ran}
\DeclareMathOperator{\dom}{dom}
\renewcommand{\Re}{\text{\rm Re}}
\renewcommand{\Im}{\text{\rm Im}}
\renewcommand{\ln}{\text{\rm ln}}
\renewcommand{\dot}{\overset{\textbf{\Large.}}}
\newcommand{\bi}{\bibitem}
\def\theequation{\@arabic\c@equation}
\numberwithin{equation}{section}
\newtheorem{theorem}{Theorem}[section]
\newtheorem{proposition}[theorem]{Proposition}
\newtheorem{lemma}[theorem]{Lemma}
\newtheorem{corollary}[theorem]{Corollary}
\newtheorem{definition}[theorem]{Definition}
\newtheorem{hypothesis}[theorem]{Hypothesis}
\theoremstyle{remark}
\newenvironment{remark}[1][]{\refstepcounter{theorem}\par\medskip\noindent\textit{Remark~$\theexample. #1$} \rmfamily}{{\ }\hfill $\diamond$ \vspace{6pt}}%Define remark environment with diamond at the end
\begin{document}
	
	\title[Non-local point interactions]{An analysis of non-selfadjoint first-order differential operators with non-local point interactions}
	
	\author[C.\ Fischbacher]{Christoph Fischbacher} 
	\address[C. Fischbacher]{Department of Mathematics, 
		Baylor University, Sid Richardson Bldg, 1410 S.\,4th Street, Waco, TX 76706, USA}
	\email{\mailto{C\_Fischbacher@baylor.edu}}
	\urladdr{\url{https://math.artsandsciences.baylor.edu/person/christoph-fischbacher-phd}}

	\author[D.\ Paraiso]{Danie Paraiso}
	\address[D.\ Paraiso]{}
	\email{\mailto{Danie\_Paraiso1@bayor.edu}}
	
	\author[C.\ Povey-Rowe]{Chloe Povey-Rowe}
	\address[C.\ Povey-Rowe]{}
	\email{\mailto{Chloe\_Poveyrowe1@baylor.edu}}
	
	\author[B.\ Zimmerman]{Brady Zimmerman}
	\address[B.\ Zimmerman]{}
	\email{\mailto{Brady\_Zimmerman1@baylor.edu}}

	%%%%%%%%%%%%%%%%%%%%%%%%%%%%%%%%%%%%%%%%% 
	\date{\today}
	\@namedef{subjclassname@2020}{\textup{2020} Mathematics Subject Classification}
	\subjclass[2020]{Primary: 47B28, 34B10, 34L05, 34L10; Secondary: 30D99, 47B44,
		47E05.}
	\keywords{Non-selfadjoint operators, non-local point interaction, Riesz basis, dissipative operators}
	
	%%%%%%%%%%%%%%%%%%%%%%%%%%%%%%%%%%%%%%%%
	\begin{abstract} We study the spectra of non-selfadjoint first-order operators on the interval with non-local point interactions, formally given by ${i\partial_x+V+k\langle \delta,\cdot\rangle}$. We give precise estimates on the location of the eigenvalues on the complex plane and prove that the root vectors of these operators form Riesz bases of $L^2(0,2\pi)$. Under the additional assumption that the operator is maximally dissipative, we prove that it can have at most one real eigenvalue, and given any $\lambda\in\mathbb{R}$, we explicitly construct the unique operator realization such that $\lambda$ is in its spectrum. We also investigate the time-evolution generated by these maximally dissipative operators.
		
	\end{abstract}
	%%%%%%%%%%%%%%%%%%%%%%%%%%%%%%%%%%%%%%%%
	
	\maketitle
	
	%{\scriptsize{\tableofcontents}}
	%\normalsize

	%%%%%%%%%%%%%%%%%%%%%%%%
	%%%%%%%%%%%%%%%%%%%%%%%% 
	\section{Introduction}\label{sec:1}
	%%%%%%%%%%%%%%%%%%%%%%%%
	%%%%%%%%%%%%%%%%%%%%%%%%
	The purpose of this paper is to study the spectra and root functions of first-order differential operators of the form
	\begin{equation} \label{eq:1.1}
		A: f\mapsto if'+Vf+f(2\pi)k\:,
	\end{equation}
	on the interval $(0,2\pi)$, where $V$ is a complex-valued potential. The additional term $``+f(2\pi)k"$ is referred to as a ``non-local point interaction" (cf.\ \cite{AN07}) formally corresponding to a singular rank-one perturbation of the form $``k\langle \delta_{2\pi},\cdot\rangle"$, where $\delta_{2\pi}$ is a Dirac point interaction at the right endpoint $2\pi$. There has been a significant amount of research dedicated to operators with non-local point interactions in recent years \cite{AHN07, AN07, AN13, CN19, DN23, DN19, N09, N10, N11, N12} ranging from first-order differential operators to Dirac and Schr\"odinger operators. However, to the best of our knowledge, all these realizations of differential operators with non-local point interactions were studied in the selfadjoint context, where in addition to the non-local point interaction of the form $``k\langle \delta,\cdot\rangle"$ occurring in the action of the operator, in order to ensure selfadjointness, one has to add another non-local term of the form $``\delta\langle k,\cdot\rangle"$, which corresponds to a non-local boundary condition occurring in the domain of the operator. By studying operators of the form \eqref{eq:1.1} while imposing local boundary conditions of the form $f(0)=\rho f(2\pi)$ for some $\rho\in\mathbb{C}$, we go beyond the selfadjoint setup and make a first contribution to the non-selfadjoint setting. 
	
	The study of these particular operators has a twofold motivation: $(i)$ they arise very naturally when considering maximally dissipative extensions of typical minimal realizations of dissipative differential operators \cite{F17, F18, F19, F21, FNW16, FNW24} and $(ii)$ the first-order case is more accessible to direct calculation thus laying foundational work for future investigations of higher-order non-selfadjoint differential operators with non-local point interactions. 
	
	The two main theorems of this paper are Theorem \ref{thm:3.14}, where we give a very precise estimate of the location of the eigenvalues and Theorem \ref{thm:3.20}, where we prove that if $\rho\neq 0$, the root functions of $A_{\rho,k}$ and $A_{\rho,k}^*$ form a Riesz basis of $L^2(0,2\pi)$ (see also Corollary \ref{thm:4.5}).
	
	We will proceed as follows:
	
	In Section \ref{sec:2}, we introduce some notation, recall basic definitions and results necessary for the results presented later. 
	
	In Section \ref{sec:3}, we introduce the minimal operator $A_{min}$ and its extensions $A_{\rho,k}$, where the scalar $\rho\in\mathbb{C}$ is a parameter describing the boundary condition and $k\in L^2(0,2\pi)$ is a Hilbert space-valued parameter describing the non-local point interaction. Since the operator under consideration is first-order, we derive an explicit formula for the resolvent of $A_{\rho,k}$ (Theorem \ref{thm:3.1}) and derive an entire function $\Phi$ (cf.\ Equation \eqref{eq:3.8}) whose zeros correspond to the eigenvalues of $A_{\rho,k}$. We also show that the order of these zeros is equal to the algebraic multiplicity of the corresponding eigenvalue (Theorem \ref{thm:3.6}). From the explicit formula for the resolvent, one can immediately tell that it is Hilbert-Schmidt (Corollary \ref{coro:3.3}). It is also noteworthy that for different choices of the parameters, the corresponding resolvents differ by at most a rank-one operator. Unlike in the selfadjoint setup, it is possible to go from the resolvent of an operator whose root functions form a Riesz basis to the resolvent of an operator with empty spectrum just by adding a rank-one perturbation (cf.\ Remark \ref{rem:3.2}), which shows that standard perturbation theory will not be applicable to the operators under consideration. 
	In Lemma \ref{lemma:3.4}, we then apply a similarity transformation (integrating factor) which transforms the operators $A_{\rho,k}$, where $\rho\in\mathbb{C}\setminus\{0\}$, to the much simpler operator $(P_{1,K}+\eta)$, where $\eta\in\mathbb{C}$, $\dom(P_{1,K})=\{f\in H^1(0,2\pi)\:|\:f(0)=f(2\pi)\}$ and $(P_{1,K}f)(x)=if'(x)+f(2\pi)K(x)$. Restricting ourselves to this particular operator, we now investigate the zeros of the corresponding entire function $\Phi$ (Theorems \ref{thm:3.7}, \ref{thm:3.11a}, and \ref{thm:3.13}). For our first result, we take advantage of the fact that the zeros of similar functions have been studied in different context by Anselone/Boas and Cartwright in the 1960s \cite{AB62, C64} and use it to show that all but finitely many eigenvalues of $P_{1,K}$ have algebraic multiplicity equal to one and can be found inside small disks around integers. We provide a refinement of this result in the latter two theorems. In Theorem \ref{thm:3.11a}, we show that the radii of these disks are not larger than the absolute value of the Fourier coefficients of $K$ and in Theorem \ref{thm:3.13}, we provide a quantitative statement about the location of the finitely many eigenvalues that may not be found inside small disks around the integers. We believe these results to be of additional independent interest since it adds to the work of Anselone/Boas and Cartwright.
	
	This analysis results in the first of two main results of this paper, Theorem \ref{thm:3.14} summarizing the above results.
	For the second main result of this paper, Theorem \ref{thm:3.20}, we then show that the root functions of $P_{1,K}$ and $P_{1,K}^*$ are quadratically close to the standard orthonormal Fourier basis, thus implying that they form Riesz bases of $L^2(0,2\pi)$.
	
	In Section \ref{sec:4}, we focus on the special situation when $A_{\rho,k}$ is maximally dissipative. We begin by giving necessary and sufficient conditions on $\rho$ and $k$ to be maximally dissipative (Proposition \ref{thm:4.2}). We then address the question of whether it is possible for these operators to have real eigenvalues. We prove in Theorem \ref{thm:5.2} that the spectrum of $A_{\rho,k}$ may contain at most one real eigenvalue and moreover, for every $\lambda\in\mathbb{R}$, we show by an explicit construction that there is a unique specific choice $\rho_\lambda$ and $k_\lambda$ such that $\lambda$ is in the spectrum of $A_{\rho_\lambda,k_\lambda}$ (cf.\ Theorem \ref{thm:4.9}). We finish by showing that if $A_{\rho,k}$ has no real eigenvalue, then the semigroup $e^{itA_{\rho,k}}$ generated by $iA_{\rho,k}$ converges to zero in norm as $t\rightarrow\infty$, and if $A_{\rho,k}$ has a real eigenvalue $\lambda$, then $e^{i(A_{\rho,k}-\lambda)t}$ converges in norm to the projection onto the corresponding eigenspace (Theorem \ref{thm:4.10}).
	%%%%%%%%%%%%%%%%%%%%%%%%
	%%%%%%%%%%%%%%%%%%%%%%%%
	\section{Notation and some preliminaries} \label{sec:2}
	Let $A$ be a closed and densely defined linear operator in a separable Hilbert space $\mathcal{H}$ with inner product $\langle \cdot,\cdot\rangle$, which we assume to be linear in the second component and antilinear in the first. We denote its domain, range, and kernel by $\dom(A), \ran(A)$, and $\ker(A)$, respectively. The spectrum of $A$ is denoted $\sigma(A)$, the point spectrum of $A$ by $\sigma_p(A)$, and its resolvent set by $\varrho(A)$. Since we are dealing with non-selfadjoint operators, we need to distinguish between the \emph{geometric multiplicity} of an eigenvalue $\lambda\in\sigma_p(A)$, given by $\dim\ker(A-\lambda)$, and its \emph{algebraic multiplicity} $m_a(\lambda,A)$. To define the latter, we first introduce the \emph{root space} of $A$ corresponding to $\lambda$. Denoted by $\mathcal{R}(\lambda,A)$, it is given by 
	\begin{equation}
		\mathcal{R}(\lambda,A)=\{f\in\mathcal{H}\:|\: \exists k\in\mathbb{N} \mbox{ such that }(A-\lambda)^kf=0\}\:.
	\end{equation}
	Its elements are referred to as \emph{root vectors} or \emph{root functions}. Then, the algebraic multiplicity  $m_a(\lambda,A)$ is given by
	\begin{equation}
		m_a(\lambda,A):=\dim\mathcal{R}(\lambda,A)\:.
	\end{equation}
	Next, we recall the concept of a \emph{Riesz basis}:
	\begin{definition} A system of vectors $\{f_n\}_{n\in\mathbb{Z}}\subseteq\mathcal{H}$ is called a \emph{Riesz basis} if there exists a bounded and boundedly invertible operator $S:\mathcal{H}\rightarrow\mathcal{H}$ and an orthonormal basis $\{\psi_n\}_{n\in\mathbb{Z}}$ of $\mathcal{H}$ such that $f_n=S\psi_n$ for each $n\in\mathbb{Z}$.
	\end{definition}
	\begin{remark}
		It is well-known, cf.\ e.\ g.\ {\cite[\S 2, Chapter VI, Result 1 on page 309]{GK69}}, that every Riesz basis $\{f_n\}_{n\in\mathbb{Z}}$ is in particular a \emph{Schauder basis} of $\mathcal{H}$, which means that for every $g\in\mathcal{H}$, there is a unique expansion of $g$ of the form
		\begin{equation}
			g=\sum_{n\in\mathbb{Z}}c_nf_n\:.
		\end{equation}
	\end{remark}
	
	\section{First-order differential operators with non-local point-interactions} \label{sec:3}
	From now on, let $\mathcal{H}=L^2(0,2\pi)$. The minimal operator $A_{min}$ is given by
	\begin{align}
		A_{min}:\quad\dom(A_{min})&=\{f\in H^1(0,2\pi)\:|\: f(0)=f(2\pi)=0\}\\ (A_{min}f)(x)&= if'(x)+V(x)f(x)\:,
	\end{align}
	where $V\in L^\infty(0,2\pi)$. We decompose $V$ into its real part $V_R$ and its imaginary part $V_I$, i.e.\ $V=V_R+iV_I$. By a slight abuse of notation, we also use the same symbols $V, V_R$, and $V_I$ to denote the associated maximal operators of multiplication by these functions (the latter two being bounded selfadjoint operators in $\mathcal{H}$). 
	Also, let us introduce the maximal operator $A_{max}=A_{min}^*$, which is given by
	\begin{align}
		A_{max}:\quad\dom(A_{max})&=H^1(0,2\pi)\:,\quad (A_{max}f)(x)&= if'(x)+\overline{V(x)}f(x)\:.
	\end{align}
	
	The purpose of this paper is a spectral analysis of extensions of $A_{min}$ that are of the form
	\begin{align}
		A_{\rho,k}: \quad\dom(A_{\rho,k})&=\{f\in H^1(0,2\pi)\:|\: f(0)=\rho f(2\pi)\} \\(A_{\rho,k}f)(x)&=if'(x)+V(x)f(x)+f(2\pi)k(x)\:,
	\end{align}
	where $\rho\in\mathbb{C}\setminus\{0\}$.
	Note that while the operators $A_{\rho,k}$ are extensions of $A_{\min}$, they preserve the differential expression $``i\partial_x+V"$ only if $k(x)=0$ almost everywhere. The additional term $``+f(2\pi)k"$ is referred to as non-local point interaction at $2\pi$. 
	
	Next, let us also determine the adjoint $A_{\rho,k}^*$
	of $A_{\rho,k}$. Since $A_{min}\subseteq A_{\rho,k}$, this immediately implies that $A_{\rho,k}^*\subseteq A_{max}$, so the effect of the non-local point interaction will occur as a non-local boundary condition in the domain of $A_{\rho,k}^*$ while its action will be of the form $``i\partial_x+\overline{V}"$. Indeed, a calculation shows that
	\begin{align} \label{eq:3.6}
		A_{\rho,k}^*:\quad\dom( A_{\rho,k}^*)&=\{f\in H^1(0,2\pi)\:| f(2\pi)=\overline{\rho}f(0)+\langle ik,f\rangle\:\}\\ f&\mapsto if'+\overline{V}f\:.
	\end{align}
	\medskip
	
	\subsection{The spectrum of $A_{\rho,k}$}
	
	We will now give a characterization of the eigenvalues of $A_{\rho,k}$. As we will see, they are solutions to the following transcendental equation, which we will refer to as the ``eigenvalue equation":
	\begin{equation}\label{eq:3.4}
		e^{i \int_{0}^{2\pi}(V(t) - \lambda)\, \mathrm{d}t} \left(i \int_{0}^{2\pi} e^{-i\int_{0}^{t}(V(s) - \lambda) \, \mathrm{d}s}k(t) \, \mathrm{d}t + \rho \right) = 1\:.
	\end{equation}
	
	\begin{theorem} \label{thm:3.1}
		
		The spectrum of $A_{\rho,k}$ consists only of eigenvalues and is given by
		\begin{equation}
			\sigma(A_{\rho,k})=\{\lambda\in \mathbb{C} \:|\: \lambda \mbox{ satisfies \eqref{eq:3.4}}\}\:,
		\end{equation}
		where the geometric multiplicity of each eigenvalue is $\dim(\ker(A_{\rho,k}-\lambda))=1$. The corresponding eigenfunctions are given by
		\begin{equation}  \label{eq:3.10}
			\phi_{\lambda}(x)=\frac{1}{\sqrt{2\pi}}e^{i\int_0^x V(t)dt-i\lambda x}\left(i\int_0^x e^{-i\int_0^t V(s)ds+i\lambda t}k(t)dt+\rho\right)\:.
		\end{equation}
		Moreover, for $\lambda\in \varrho(A_{\rho,k})=\mathbb{C}\setminus\sigma(A_{\rho,k})$, the resolvent operator $(A_{\rho,k}-\lambda)^{-1}$ is given by
		\begin{align}
			&\left((A_{\rho,k}-\lambda)^{-1}g\right)(x)\\=&\frac{-i}{I_{V,\lambda}(x)}\left[\frac{\int_0^{2\pi}I_{V,\lambda}(t)g(t)dt}{I_{V,\lambda}(2\pi)-i\int_0^{2\pi}I_{V,\lambda}(t)k(t)dt-\rho}\left(\int_0^x iI_{V,\lambda}(t)k(t)dt+\rho\right)+\int_0^x I_{V,\lambda}(t)g(t)dt\right]\:, \label{eq:3.27}
		\end{align}
		where 
		\begin{equation} \label{eq:3.15a}
			I_{V,\lambda}(x):=\exp\left(-i\int_0^x V(t)dt+i\lambda x\right)\:. 
		\end{equation}
		
	\end{theorem}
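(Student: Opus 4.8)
The plan is to solve the first-order ODE $(A_{\rho,k}-\lambda)f = g$ directly and read off everything — the eigenvalue condition, the eigenfunctions, the resolvent formula — from the explicit solution. Since the operator acts as $if'(x) + V(x)f(x) + f(2\pi)k(x)$, write $c := f(2\pi) \in \mathbb{C}$; then the equation $(A_{\rho,k}-\lambda)f = g$ becomes the inhomogeneous linear ODE $if' + (V-\lambda)f = g - c\,k$, whose solution is obtained by the integrating factor. Concretely, with $I_{V,\lambda}(x) = \exp(-i\int_0^x V(t)\,dt + i\lambda x)$ one has $\frac{d}{dx}\big(I_{V,\lambda}(x)^{-1} f(x)\big) = -i\, I_{V,\lambda}(x)\,(g(x) - c\,k(x))$ (up to the sign bookkeeping coming from the $i$ in front of $f'$), so that
\begin{equation}
  f(x) = I_{V,\lambda}(x)\Big(f(0) - i\int_0^x I_{V,\lambda}(t)\big(g(t)-c\,k(t)\big)\,dt\Big),
\end{equation}
using $I_{V,\lambda}(0)=1$. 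This already expresses $f$ in terms of the two scalars $f(0)$ and $c=f(2\pi)$.

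Next I would impose the two scalar constraints that pin down $f(0)$ and $c$. The boundary condition $f(0) = \rho f(2\pi) = \rho c$ handles one of them. For the other, evaluate the displayed formula at $x = 2\pi$ and substitute $f(0) = \rho c$; this yields
\begin{equation}
  c = I_{V,\lambda}(2\pi)\Big(\rho c - i\int_0^{2\pi} I_{V,\lambda}(t)\big(g(t)-c\,k(t)\big)\,dt\Big),
\end{equation}
a single linear equation for $c$. Collecting the coefficient of $c$, this is $c\cdot D_\lambda = (\text{term in }g)$, where
\begin{equation}
  D_\lambda := 1 - I_{V,\lambda}(2\pi)\Big(\rho + i\int_0^{2\pi} I_{V,\lambda}(t)\,k(t)\,dt\Big).
\end{equation}
Now the dichotomy is transparent. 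If $D_\lambda \neq 0$ we can solve for $c$, then for $f(0)=\rho c$, substitute back into the formula for $f$, and — after collecting the $\int_0^x$ and $\int_0^{2\pi}$ pieces — obtain exactly the claimed resolvent formula \eqref{eq:3.27}; boundedness on $L^2(0,2\pi)$ is then immediate from Cauchy–Schwarz since $I_{V,\lambda}$ and $k$ are bounded/$L^2$ on the finite interval, so $\lambda \in \varrho(A_{\rho,k})$. If $D_\lambda = 0$, then for $g = 0$ the equation for $c$ is satisfied by every $c$, and choosing $c = 1$, $f(0) = \rho$ produces the nonzero solution $\phi_\lambda$ of \eqref{eq:3.10} (up to the normalization $1/\sqrt{2\pi}$), so $\lambda$ is an eigenvalue; conversely any eigenfunction arises this way, and since the solution is completely determined by the single scalar $c$, the kernel is one-dimensional. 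Finally, $D_\lambda = 0$ is exactly the eigenvalue equation \eqref{eq:3.4}: multiply $D_\lambda = 0$ through, note $I_{V,\lambda}(2\pi) = e^{-i\int_0^{2\pi}(V(t)-\lambda)\,dt}$ has modulus-one-times-exponential form, and rearrange; the reciprocal $e^{i\int_0^{2\pi}(V-\lambda)}$ appears because one divides by $I_{V,\lambda}(2\pi)$, and the sign inside the inner integral flips because $I_{V,\lambda}(t) = e^{-i\int_0^t(V-\lambda)}$ whereas \eqref{eq:3.4} is written with $e^{-i\int_0^t(V-s)\,ds}$ — these match after checking $-i\int_0^t V + i\lambda t = -i\int_0^t(V(s)-\lambda)\,ds$. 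This also shows $\sigma(A_{\rho,k}) = \sigma_p(A_{\rho,k})$, since for $\lambda \notin \sigma_p$ we exhibited a bounded inverse.

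The only genuinely nonroutine point is that the resolvent formula as written has the denominator $I_{V,\lambda}(2\pi) - i\int_0^{2\pi}I_{V,\lambda}(t)k(t)\,dt - \rho$ rather than $D_\lambda$ itself; so I should double-check the algebra when collecting terms, because after solving $c = (\text{$g$-term})/D_\lambda$ and plugging into $f = I_{V,\lambda}(x)(\rho c - i\int_0^x I_{V,\lambda}(g - ck))$, the factor $I_{V,\lambda}(2\pi)$ gets absorbed and what survives in the denominator is $D_\lambda / I_{V,\lambda}(2\pi) = I_{V,\lambda}(2\pi)^{-1} - \rho - i\int_0^{2\pi} I_{V,\lambda}(t)k(t)\,dt$; one then uses $I_{V,\lambda}(2\pi)^{-1} = I_{V,\lambda}(2\pi)\cdot(\text{something})$? — no, more carefully, $I_{V,\lambda}(2\pi)^{-1}$ is just the reciprocal, and the paper writes $I_{V,\lambda}(2\pi)$ in the numerator of that fraction, meaning a further multiplication of numerator and denominator by $I_{V,\lambda}(2\pi)$ has been performed. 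This is pure bookkeeping; the conceptual content is entirely in the integrating-factor solve and the linear-algebra reduction to the scalar $c$. I would carry out: (1) the integrating-factor solution; (2) elimination of $f(0)$ via the boundary condition; (3) the scalar equation for $c$ and the case split on $D_\lambda$; (4) back-substitution to get \eqref{eq:3.27} and \eqref{eq:3.10}; (5) the rearrangement $D_\lambda = 0 \Leftrightarrow$ \eqref{eq:3.4} and the dimension count.
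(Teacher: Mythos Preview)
Your approach is essentially the same as the paper's: solve the inhomogeneous first-order ODE via the integrating factor $I_{V,\lambda}$, impose the boundary condition $f(0)=\rho f(2\pi)$, reduce to a scalar equation for $c=f(2\pi)$, and split into the cases where this scalar equation is (un)solvable to obtain the resolvent and the eigenvalue condition respectively. One bookkeeping fix resolves the confusion you flag about the denominator: the correct identity is $(I_{V,\lambda}f)' = -iI_{V,\lambda}(g-ck)$ (not $(I_{V,\lambda}^{-1}f)'$), so $f(x)=I_{V,\lambda}(x)^{-1}\big(\rho c + ic\int_0^x I_{V,\lambda}k - i\int_0^x I_{V,\lambda}g\big)$, and evaluating at $x=2\pi$ gives directly $c\big(I_{V,\lambda}(2\pi)-i\int_0^{2\pi}I_{V,\lambda}k-\rho\big)=-i\int_0^{2\pi}I_{V,\lambda}g$, matching the paper's denominator without any further manipulation.
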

	\begin{proof}
		Consider the eigenvalue equation:
		\begin{equation} \label{eq:3.8}
			A_{\rho,k}f=if'+Vf+f(2\pi)k=\lambda f
		\end{equation}
		subject to the boundary condition $f(0)=\rho f(2\pi)$. We now argue that such $f$ must satisfy $f(2\pi)\neq 0$, since otherwise
		\begin{equation}
			if'+(V-\lambda) f=0
		\end{equation}
		subject to the initial condition $f(0)=f(2\pi)=0$ and consequently $f(x)\equiv 0$, which is a contradiction.
		Hence, without loss of generality, we may assume that $f(2\pi)=1$. Thus, we compute the solution to \eqref{eq:3.8} satisfying $f(2\pi)=1$ and $f(0)=\rho$. In this case, \eqref{eq:3.8} is equivalent to 
		\begin{equation}
			f'-iVf+i\lambda f=ik\:.
		\end{equation}
		Multiplying both sides by the integrating factor $I_{V,\lambda}(x)$ given in \eqref{eq:3.15a}, this is equivalent to
		\begin{equation}
			(I_{V,\lambda}f)'=iI_{V,\lambda}k\:,
		\end{equation}
		which yields
		\begin{equation} \label{eq:3.11}
			f(x)=\frac{1}{I_{V,\lambda}(x)}\left(i\int_0^x I_{V,\lambda}(t)k(t)dt+C\right)\:,
		\end{equation}
		where $C\in\mathbb{C}$. Using that $I_{V,\lambda}(0)=1$, evaluating \eqref{eq:3.11} at $0$ yields $f(0)=C$ and consequently $C=\rho$. Then, evaluating \eqref{eq:3.11} at $2\pi$ yields the eigenvalue equation
		\begin{equation}
			f(2\pi)=\frac{1}{I_{V,\lambda}(2\pi)}\left(i\int_0^{2\pi}I_{V,\lambda}(t)k(t)dt+\rho\right)=1\:,
		\end{equation}
		which is the same as \eqref{eq:3.4}. Thus, if $\lambda$ satisfies \eqref{eq:3.4}, then it is an eigenvalue of $A_{\rho,k}$. Moreover, for such $\lambda$, the corresponding eigenfunction $\phi_\lambda$ is given by 
		\begin{align}
			\phi_{\lambda}(x)&=\frac{1}{I_{V,\lambda}(x)}\left(i\int_0^x I_{V,\lambda}(t)k(t)dt+\rho\right)\\=&e^{i\int_0^x V(t)dt-i\lambda x}\left(i\int_0^x e^{-i\int_0^t V(s)ds+i\lambda t}k(t)dt+\rho\right)\:.
		\end{align}
		%%%%%%
		To show that $\dim(\ker(A_{\rho,k}-\lambda))=1$, suppose that for $\lambda\in\sigma(A_{\rho,k})$, there exists two linearly independent functions $f_1, f_2\in \dom(A_{\rho,k})$ with $f_1(2\pi)=f_2(2\pi)=1$ such that 
		\begin{equation}
			A_{\rho,k}f_1=if_1'+Vf_1+k=\lambda f_1\mbox{ and } A_{\rho,k}f_2=if_2'+Vf_2+k=\lambda f_2\:.
		\end{equation}
		Then $h:=f_1-f_2$ is also an eigenfunction of $A_{\rho,k}$ with the same eigenvalue $\lambda$, i.e.\ $A_{\rho,k}h=\lambda h$. Since $h(2\pi)=0=h(0)$, it satisfies 
		\begin{equation}
			(A_{\rho,k}-\lambda)h=ih'+(V-\lambda)h=0
		\end{equation}
		subject to the initial condition $h(0)=0=h(2\pi)$. Consequently $h(x)\equiv 0$, implying $f_1=f_2$, which is a contradiction to $f_1$ and $f_2$ being linearly independent. Hence, $\dim(\ker(A_{\rho,k}-\lambda))=1$.
		%%%%%%
		To determine the resolvent operator, given $g\in L^2(0,2\pi)$, we explicitly construct the solution $f\in\dom(A_{\rho,k})$ to the equation
		\begin{equation}
			(A_{\rho,k}-\lambda)f=if'+(V-\lambda)f+f(2\pi)k=g\:.
		\end{equation}
		Rearranging this equation and again using the integrating factor $I_{V,\lambda}$, this is equivalent to
		%%%%%%%%%%%%%%%%%%%%%%%%%%%%%%%%%%%%%%%%%%%%%%%%%%%%%%
		\begin{equation}
			(I_{V,\lambda}f)'=iI_{V,\lambda}f(2\pi)k-iI_{V,\lambda}g\:,
		\end{equation}
		which implies
		\begin{equation} \label{eq:3.17}
			f(x)=\frac{f(2\pi)}{I_{V,\lambda}(x)}\int_0^x iI_{V,\lambda}(t)k(t)dt-\frac{i}{I_{V,\lambda}(x)}\int_0^xI_{V,\lambda}(t)g(t)dt +\frac{C}{I_{V,\lambda}(x)}\:,
		\end{equation}
		where $C\in\mathbb{C}$ is again an integration constant. Evaluating \eqref{eq:3.17} at $0$ -- again using that $I_{V,\lambda}(0)=1$ -- implies 
		\begin{equation} \label{eq:3.18}
			f(0)=C=\rho f(2\pi).
		\end{equation}
		Hence, evaluating \eqref{eq:3.17} at $2\pi$ then yields
		\begin{equation}
			f(2\pi)=\frac{1}{I_{V,\lambda}(2\pi)}\left(\int_0^{2\pi}iI_{V,\lambda}(t)k(t)dt+\rho\right)f(2\pi)-\frac{i}{I_{V,k}(2\pi)}\int_0^{2\pi}I_{V,\lambda}(t)g(t)dt\:,
		\end{equation}
		which can be solved for $f(2\pi)$:
		\begin{align} \label{eq:3.20}
			f(2\pi)&=-\frac{i}{I_{V,\lambda}(2\pi)}\left[1-\frac{1}{I_{V,\lambda}(2\pi)}\left(\int_0^{2\pi}iI_{V,\lambda}(t)k(t)dt+\rho\right)\right]^{-1}\int_0^{2\pi} I_{V,\lambda}(t)g(t)dt\\
			&=\frac{-i\int_0^{2\pi} I_{V,\lambda}(t)g(t)dt}{I_{V,\lambda}(2\pi)-i\int_0^{2\pi} I_{V,\lambda}(t)k(t)dt-\rho}\:,
			\label{eq:3.21}
		\end{align}
		where we are using that since $\lambda\notin \sigma(A_{\rho,k})$, we have
		\begin{equation}
			1-\frac{1}{I_{V,\lambda}(2\pi)}\left(\int_0^{2\pi}iI_{V,\lambda}(t)k(t)dt+\rho\right)\neq 0 
		\end{equation}
		and thus, the expression on the right-hand side of \eqref{eq:3.20} is well-defined. Now, plugging \eqref{eq:3.21} and \eqref{eq:3.18} back into \eqref{eq:3.17} gives
		\eqref{eq:3.27}.
	\end{proof}
	
	\begin{remark} \label{rem:3.2}
		Note that for $\rho=0$ and $k\equiv 0$, the eigenvalue equation \eqref{eq:3.4}
		has no solution. Indeed, the spectrum of the corresponding operator $A_{0,0}$ is empty. On the other hand, observe that for $\lambda\in\varrho(A_{\rho_1,k_1})\cap\varrho(A_{\rho_2,k_2})$ the resolvent difference $(A_{\rho_1,k_1}-\lambda)^{-1}-(A_{\rho_2,k_2}-\lambda)^{-1}$ is at most rank-one. This can be verified directly by considering \eqref{eq:3.27}, but also follows from abstract reasons since $(A_{\rho_1,k_1}-\lambda)^{-1}$ and $(A_{\rho_2,k_2}-\lambda)^{-1}$ coincide on $\ran(A_{min}-\lambda)$, which has codimension one. This is an example of a rank-one perturbation of the resolvents of  unbounded operators whose root functions form Schauder bases (cf.\ Theorem \ref{thm:3.20} below) which yields the resolvent of an operator with empty spectrum. The fact that such a phenomenon can occur just by adding a rank-one perturbation indicates that techniques from the standard perturbation theory of selfadjoint operators will not be applicable to this operator. We also point to \cite{BY15, BY16}, where questions relating to this phenomenon are addressed. 
	\end{remark}
	
	Next, note that the integral kernel of the resolvent $(A_{\rho,k}-\lambda)^{-1}$, which can be read off in \eqref{eq:3.27}, is a bounded function in $L^2((0,2\pi)\times(0,2\pi))$. This implies the following 
	\begin{corollary} \label{coro:3.3}
		For $\lambda\notin\sigma(A_{\rho,k})$, the resolvents $(A_{\rho,k}-\lambda)^{-1}$ are Hilbert-Schmidt.
	\end{corollary}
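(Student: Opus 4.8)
The plan is simply to read off the integral kernel of the resolvent from the explicit formula \eqref{eq:3.27} and check that it is square-integrable on $(0,2\pi)\times(0,2\pi)$; the Hilbert--Schmidt property is then immediate from the standard fact that an integral operator on $L^2$ of a $\sigma$-finite measure space is Hilbert--Schmidt if and only if its kernel is square-integrable (see, e.g., \cite{GK69}), in which case the Hilbert--Schmidt norm equals the $L^2$-norm of the kernel.

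Concretely, using $\int_0^x I_{V,\lambda}(t)g(t)\,dt=\int_0^{2\pi}\chi_{[0,x]}(t)I_{V,\lambda}(t)g(t)\,dt$, formula \eqref{eq:3.27} exhibits $(A_{\rho,k}-\lambda)^{-1}$ as the integral operator $g\mapsto\int_0^{2\pi}G_\lambda(x,t)g(t)\,dt$ with
\begin{equation*}
 G_\lambda(x,t)=\frac{-i}{I_{V,\lambda}(x)}\left[\frac{I_{V,\lambda}(t)}{D_\lambda}\left(i\int_0^x I_{V,\lambda}(s)k(s)\,ds+\rho\right)+\chi_{[0,x]}(t)\,I_{V,\lambda}(t)\right],
\end{equation*}
where $D_\lambda:=I_{V,\lambda}(2\pi)-i\int_0^{2\pi}I_{V,\lambda}(t)k(t)\,dt-\rho$, which is a nonzero constant precisely because $\lambda\notin\sigma(A_{\rho,k})$ (cf.\ the proof of Theorem \ref{thm:3.1}).

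It then remains to bound $G_\lambda$. From \eqref{eq:3.15a} one has $|I_{V,\lambda}(x)|=\exp\!\big(\int_0^x V_I(t)\,dt-x\,\Im(\lambda)\big)$, so, since $V\in L^\infty(0,2\pi)$, the functions $I_{V,\lambda}$ and $1/I_{V,\lambda}$ are both bounded above and below by positive constants on $[0,2\pi]$; in particular $I_{V,\lambda}\in L^2(0,2\pi)$, so by Cauchy--Schwarz $\big|i\int_0^x I_{V,\lambda}(s)k(s)\,ds+\rho\big|\le\|I_{V,\lambda}\|_{L^2(0,2\pi)}\|k\|_{L^2(0,2\pi)}+|\rho|<\infty$, and $0\le\chi_{[0,x]}(t)\le 1$. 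Multiplying these estimates shows $G_\lambda\in L^\infty((0,2\pi)^2)$, hence $G_\lambda\in L^2((0,2\pi)^2)$ since the square has finite Lebesgue measure, which is exactly what we need.

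There is essentially no obstacle here; the only steps requiring a word of care are the uniform two-sided bound on $|I_{V,\lambda}|$, which is precisely where the standing hypothesis $V\in L^\infty(0,2\pi)$ is used, and the fact that the scalar $D_\lambda$ in the denominator does not vanish off the spectrum, which was already established when deriving \eqref{eq:3.27}. If desired, one can also record the explicit identity $\|(A_{\rho,k}-\lambda)^{-1}\|_{\mathrm{HS}}=\|G_\lambda\|_{L^2((0,2\pi)^2)}$, which in turn admits an upper bound in terms of $\|V\|_\infty$, $\|k\|_{L^2(0,2\pi)}$, $|\rho|$, $|\Im(\lambda)|$, and $|D_\lambda|^{-1}$.
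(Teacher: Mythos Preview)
Your argument is correct and follows exactly the approach indicated in the paper: read off the integral kernel from \eqref{eq:3.27}, observe it is bounded (hence square-integrable on the bounded square $(0,2\pi)^2$), and conclude the Hilbert--Schmidt property. The paper states this in a single sentence just before the corollary, whereas you have written out the explicit kernel and the boundedness verification in detail; nothing further is needed.
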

	
	We now want to gain a better understanding of the nature of the spectrum. To this end, we will apply a similarity transformation to simplify $A_{\rho,k}$.
	
	Thus, we introduce the quantity $\eta=\eta(\rho,V)$:
	\begin{equation} \label{eq:3.7}
		\eta:=\frac{1}{2\pi}\left(\int_0^{2\pi}V(t)dt-i\ln(\rho)\right)\:,
	\end{equation}
	where $\ln(\rho)=\ln|\rho|+i\varphi$ with $\varphi\in[0,2\pi)$ such that $\rho=|\rho|e^{i\varphi}$. Moreover, we introduce the function $W\in H^1(0,2\pi)$ given by
	\begin{equation}  \label{eq:3.32}
		W(x):=\exp\left(-i\int_0^xV(t)dt+i\eta x\right)\:,
	\end{equation}
	as the integrating factor and let $W$ also denote the maximal operator of multiplication by $W(x)$. Note that $W$ is boundedly invertible with its inverse $W^{-1}$ being the maximal operator of multiplication by $1/W(x)$.
	
	It will also be convenient to introduce the operators $P_{1,K}$ given by
	\begin{align}
		P_{1,K}:\quad\dom(P_{1,K})&=\{f\in H^1(0,2\pi)\:|\: f(0)=f(2\pi)\}\\ (P_{1,K}f)(x)&=if'(x)+f(2\pi)K(x)\:.
	\end{align}
	
	For $K\equiv 0$, the corresponding operator $P_{1,0}$ is the selfadjoint periodic momentum operator $P_{1,0}$ given by
	\begin{equation}
		P_{1,0}:\quad\dom(P_{1,0})=\{f\in H^1(0,2\pi)\:|\: f(0)=f(2\pi)\},\quad f\mapsto if'\:.
	\end{equation}
	Its spectrum $\sigma(P_{1,0})=\mathbb{Z}$ is purely discrete and simple. Moreover, for $n\in\mathbb{Z}$, the corresponding eigenfunction $\psi_n$ is given by
	\begin{equation} \label{eq:3.36}
		\psi_n(x)=\frac{1}{\sqrt{2\pi}}e^{-inx}\:,
	\end{equation}
	where it is -- of course -- well-known that $\{\psi_n\}_{n\in\mathbb{Z}}$ forms an orthonormal basis of $\mathcal{H}$.
	
	By direct calculation, one verifies the following
	\begin{lemma}
		Let $k\in L^2(0,2\pi)$ and $K(x):=W(x)k(x)/W(2\pi)$. If $\rho\neq 0$, then the operators $A_{\rho,k}$ and $(P_{1,K}+\eta)$ are similar:
		\begin{equation}
			W^{-1}(P_{1,K}+\eta)W=A_{\rho,k}\:,
		\end{equation}
		where $\eta$ and $W$ are given in \eqref{eq:3.7} and \eqref{eq:3.32}, respectively. 
		\label{lemma:3.4}
	\end{lemma}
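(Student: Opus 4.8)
The plan is to verify directly that the bounded, boundedly invertible multiplication operator $W$ intertwines the two operators. This amounts to two checks: that $W$ maps $\dom(A_{\rho,k})$ bijectively onto $\dom(P_{1,K})$, and that the two differential expressions agree after the conjugation.

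First I would record the two endpoint values of the integrating factor. Since $W(x)=\exp\big(-i\int_0^x V(t)\,dt+i\eta x\big)$, we have $W(0)=1$, and using the definition \eqref{eq:3.7} of $\eta$ one computes
\begin{equation*}
-i\int_0^{2\pi}V(t)\,dt+2\pi i\eta=-i\int_0^{2\pi}V(t)\,dt+i\int_0^{2\pi}V(t)\,dt+\ln(\rho)=\ln(\rho),
\end{equation*}
so that $W(2\pi)=e^{\ln(\rho)}=\rho$ with the branch of $\ln$ fixed as in the statement. Since $H^1(0,2\pi)\hookrightarrow C[0,2\pi]$, a product of two $H^1$ functions on $(0,2\pi)$ is again in $H^1(0,2\pi)$; hence $Wf\in H^1(0,2\pi)$ whenever $f\in H^1(0,2\pi)$, with $(Wf)(0)=f(0)$ and $(Wf)(2\pi)=\rho f(2\pi)$. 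Consequently $f(0)=\rho f(2\pi)$ holds if and only if $(Wf)(0)=(Wf)(2\pi)$; since $W^{-1}$ is multiplication by $1/W$ (also an integrating factor of the same type), the same argument applied to $W^{-1}$ shows that $W$ restricts to a bijection of $\dom(A_{\rho,k})$ onto $\dom(P_{1,K})$.

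Next I would match the actions. Differentiating gives $W'=i(\eta-V)W$, so for $f\in\dom(A_{\rho,k})$ and $g:=Wf$,
\begin{equation*}
ig'=i\big(W'f+Wf'\big)=W\big(if'+(V-\eta)f\big)=W(if'+Vf)-\eta g,
\end{equation*}
that is, $ig'+\eta g=W(if'+Vf)$. For the non-local term, using $g(2\pi)=\rho f(2\pi)$ and $K=Wk/W(2\pi)=Wk/\rho$,
\begin{equation*}
g(2\pi)K(x)=\rho f(2\pi)\cdot\frac{W(x)k(x)}{\rho}=W(x)\,f(2\pi)k(x).
\end{equation*}
Adding these two identities yields $(P_{1,K}+\eta)g=ig'+g(2\pi)K+\eta g=W\big(if'+Vf+f(2\pi)k\big)=W(A_{\rho,k}f)$, i.e. $(P_{1,K}+\eta)Wf=WA_{\rho,k}f$ on $\dom(A_{\rho,k})$. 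Combined with the domain bijection, this gives $W^{-1}(P_{1,K}+\eta)W=A_{\rho,k}$.

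There is no deep obstacle here; the only points that require care are the correct identification $W(2\pi)=\rho$ via the chosen branch of the logarithm in \eqref{eq:3.7}, and the (standard) fact that multiplication by the $H^1$ function $W$ preserves $H^1(0,2\pi)$, so that the domains are genuinely carried onto one another rather than merely included in one direction.
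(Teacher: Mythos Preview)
Your proof is correct and is exactly the direct calculation the paper alludes to; the paper itself gives no details beyond ``By direct calculation, one verifies the following,'' and your verification of the endpoint values $W(0)=1$, $W(2\pi)=\rho$, the domain bijection, and the intertwining of the differential expressions is precisely what is needed.
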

	
	Since this implies that $\sigma(A_{\rho,k})=\sigma(P_{1,K}+\eta)$ as well as that $P_{1,K}$ and $A_{\rho,k}$ have the same root functions, we focus on the operators $P_{1,K}$ from now on. In this case, the eigenvalue equation \eqref{eq:3.4} simplifies to
	
	\begin{equation} \label{eq:3.38}
		\Phi(\lambda):=1-e^{2\pi i\lambda}-\int_0^{2\pi}e^{i\lambda t}(-iK(t))dt=0\:.
	\end{equation}
	Since $\int_0^{2\pi}e^{i\lambda t}(-iK(t))dt$ can be thought of as the Fourier transform (in $L^2(\bbR)$) of the compactly supported function $-\sqrt{2\pi}i\chi_{[0,2\pi]}(t)K(t)$, this shows that $\Phi(\lambda)$ is entire. Because of this, the following result is an immediate consequence of the Identity Theorem:
	\begin{theorem} \label{thm:3.5}
		The spectrum of $P_{1,K}$ is purely discrete, i.e. it has no accumulation points.
	\end{theorem}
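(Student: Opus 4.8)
The plan is to identify $\sigma(P_{1,K})$ with the zero set of the entire function $\Phi$ and then invoke the Identity Theorem. First I would observe that $P_{1,K}$ is itself an operator of the form $A_{\rho,k}$ — namely the one obtained by taking $V\equiv 0$, $\rho=1$, and $k=K$ — so that Theorem~\ref{thm:3.1} applies directly: the spectrum of $P_{1,K}$ consists only of eigenvalues, and $\lambda\in\sigma(P_{1,K})$ if and only if $\lambda$ satisfies \eqref{eq:3.4} with these parameters. Multiplying that equation through by $e^{2\pi i\lambda}$ turns it into $1-e^{2\pi i\lambda}+i\int_0^{2\pi}e^{i\lambda t}K(t)\,dt=0$, which is exactly $\Phi(\lambda)=0$ in the notation of \eqref{eq:3.38}. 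Hence $\sigma(P_{1,K})=\Phi^{-1}(\{0\})$, the zero set of a function already noted in the excerpt to be entire.

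The only remaining point is to check that $\Phi$ is not identically zero; once that is known, the Identity Theorem gives that the zeros of $\Phi$ are isolated, hence have no finite accumulation point, which is precisely the claimed pure discreteness of $\sigma(P_{1,K})$. To verify $\Phi\not\equiv 0$, I would evaluate $\Phi$ along the positive imaginary axis. For $\lambda=is$ with $s>0$ one has $e^{2\pi i\lambda}=e^{-2\pi s}\to 0$ as $s\to\infty$, while, since $K\in L^2(0,2\pi)\subseteq L^1(0,2\pi)$ and $e^{-st}|K(t)|\le|K(t)|$ for $t\in(0,2\pi)$,
\[
\left|\int_0^{2\pi}e^{i\lambda t}\bigl(-iK(t)\bigr)\,dt\right|\le\int_0^{2\pi}e^{-st}\,|K(t)|\,dt\xrightarrow[\,s\to\infty\,]{}0
\]
by dominated convergence. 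Therefore $\Phi(is)\to 1$ as $s\to+\infty$, so $\Phi$ is a nonconstant entire function and in particular $\Phi\not\equiv 0$.

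I do not expect a genuine obstacle in this argument: the substance is entirely carried by Theorem~\ref{thm:3.1} (which delivers the spectrum as the zero set of $\Phi$) together with the already-recorded entireness of $\Phi$, and the only non-formal step is the elementary limit above showing $\Phi\not\equiv 0$. If one preferred to bypass Theorem~\ref{thm:3.1}, an alternative would be to read off from the resolvent formula \eqref{eq:3.27} — specialized to $P_{1,K}$ via Lemma~\ref{lemma:3.4}, or recomputed directly — that $(P_{1,K}-\lambda)^{-1}$ exists and is holomorphic in $\lambda$ exactly off $\Phi^{-1}(\{0\})$, but the route through Theorem~\ref{thm:3.1} is shorter and is the one I would write up.
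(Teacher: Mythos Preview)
Your proposal is correct and follows essentially the same route as the paper: identify $\sigma(P_{1,K})$ with the zero set of the entire function $\Phi$ and invoke the Identity Theorem. The paper states the result as an immediate consequence of the Identity Theorem without explicitly checking that $\Phi\not\equiv 0$; your verification via $\Phi(is)\to 1$ as $s\to+\infty$ is a welcome addition that makes the argument fully self-contained.
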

	While the geometric multiplicity of each eigenvalue of $P_{1,K}$ is equal to one, this is not necessarily the case for the algebraic multiplicity. Indeed, the algebraic multiplicity of an eigenvalue $\lambda\in\sigma(P_{1,K})$ is equal to its order as a zero of $\Phi(\lambda)$:
	\begin{theorem} Let $\lambda\in\sigma(P_{1,K})$. Then the algebraic multiplicity $m_a(\lambda,P_{1,K})$ of $\lambda$ is equal to its multiplicity as a root of $\Phi(\lambda)$. \label{thm:3.6}
	\end{theorem}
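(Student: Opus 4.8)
The plan is to pass from the algebraic multiplicity to the order of the pole of the resolvent, and then read that pole order off the explicit resolvent formula. First I would note that $P_{1,K}=A_{1,K}$ (put $\rho=1$ and $V\equiv 0$ in the definition of $A_{\rho,k}$), so that Theorem~\ref{thm:3.1} applies verbatim to $P_{1,K}$: in particular every eigenvalue has geometric multiplicity one, and by \eqref{eq:3.27} (with $I_{0,\lambda}(x)=e^{i\lambda x}$, in which case the denominator appearing in \eqref{eq:3.27} equals $-\Phi(\lambda)$) the resolvent has the form
\[
	(P_{1,K}-\lambda)^{-1}g=\frac{F_\lambda(g)}{\Phi(\lambda)}\,v_\lambda+S(\lambda)g,
\]
where $F_\lambda(g):=\int_0^{2\pi}e^{i\lambda t}g(t)\,dt$ is a bounded functional, $v_\lambda(x):=ie^{-i\lambda x}\bigl(i\int_0^x e^{i\lambda t}K(t)\,dt+1\bigr)=i\sqrt{2\pi}\,\phi_\lambda(x)$ is a constant multiple of the eigenfunction from \eqref{eq:3.10}, and $S(\lambda)g(x):=-ie^{-i\lambda x}\int_0^x e^{i\lambda t}g(t)\,dt$. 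The key observation is that $\lambda\mapsto v_\lambda$, $\lambda\mapsto F_\lambda$ and $\lambda\mapsto S(\lambda)$ are all entire (norm-holomorphic, in fact Hilbert--Schmidt-valued), so $(P_{1,K}-\lambda)^{-1}$ is meromorphic with its only possible singularities at the zeros of the scalar entire function $\Phi$, and at such a zero $\lambda_0$ its pole order is at most $\ord_{\lambda_0}\Phi$.

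Second, I would show the pole order is \emph{exactly} $m:=\ord_{\lambda_0}\Phi$. Writing $\Phi(\lambda)=(\lambda-\lambda_0)^m\psi(\lambda)$ with $\psi$ holomorphic near $\lambda_0$ and $\psi(\lambda_0)\neq 0$, the most singular Laurent coefficient of $(P_{1,K}-\lambda)^{-1}$ at $\lambda_0$ is the rank-one operator $g\mapsto \psi(\lambda_0)^{-1}F_{\lambda_0}(g)\,v_{\lambda_0}$. This operator is nonzero: $v_{\lambda_0}=i\sqrt{2\pi}\,\phi_{\lambda_0}\neq 0$ because it is an eigenfunction, and $F_{\lambda_0}$ is not the zero functional because $F_{\lambda_0}(g)=\|e^{i\lambda_0\,\cdot}\|_{L^2(0,2\pi)}^2>0$ for $g(t)=\overline{e^{i\lambda_0 t}}$. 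Hence $(P_{1,K}-\lambda)^{-1}$ has a genuine pole of order $m$ at $\lambda_0$. This non-cancellation is the step I expect to require the most care, since a priori one must rule out the numerator $F_\lambda v_\lambda$ vanishing at $\lambda_0$ to an order that would lower the pole order.

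Finally, I would invoke the standard spectral theory of operators with compact resolvent. By Corollary~\ref{coro:3.3} (or directly, since the kernel in \eqref{eq:3.27} specialized to $P_{1,K}$ is bounded on $(0,2\pi)\times(0,2\pi)$), $P_{1,K}$ has Hilbert--Schmidt resolvent; thus each $\lambda_0\in\sigma(P_{1,K})$ is an eigenvalue of finite algebraic multiplicity, and the order of the pole of the resolvent at $\lambda_0$ equals the ascent $p=\min\{k\in\mathbb{N}:\ker(P_{1,K}-\lambda_0)^k=\ker(P_{1,K}-\lambda_0)^{k+1}\}$, with $\ker(P_{1,K}-\lambda_0)^p=\mathcal{R}(\lambda_0,P_{1,K})$. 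Since $\dim\ker(P_{1,K}-\lambda_0)=1$, the root space carries a single Jordan chain, so $\dim\ker(P_{1,K}-\lambda_0)^k=\min\{k,m_a(\lambda_0,P_{1,K})\}$ and hence $p=m_a(\lambda_0,P_{1,K})$. Combining the three steps gives $m_a(\lambda_0,P_{1,K})=p=m=\ord_{\lambda_0}\Phi$, as claimed. (An alternative, more hands-on route avoids the resolvent altogether: one constructs a Jordan chain $\phi_0,\phi_1,\dots$ by successively solving $(P_{1,K}-\lambda_0)\phi_j=\phi_{j-1}$ with an integrating factor and imposing the periodic boundary condition; the solvability condition at stage $j$ works out to $\Phi^{(j)}(\lambda_0)=0$, giving the same conclusion at the cost of heavier bookkeeping.)
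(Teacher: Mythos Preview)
Your proof is correct, but it follows a genuinely different route from the paper's. The paper passes to the adjoint $P_{1,K}^*$, observes that $m_a(\lambda,P_{1,K})=m_a(\overline{\lambda},P_{1,K}^*)$, and exploits the fact that the root space $\mathcal{R}(\overline{\lambda},P_{1,K}^*)$ is spanned by the explicit functions $e^{-i\overline{\lambda}x},\,xe^{-i\overline{\lambda}x},\dots,x^m e^{-i\overline{\lambda}x}$ (since the action of $P_{1,K}^*$ is just $i\partial_x$). Checking which of these actually lie in $\dom(P_{1,K}^*)$ reduces to the nonlocal boundary condition $f(2\pi)=f(0)+\langle iK,f\rangle$, and a direct computation shows that $x^k e^{-i\overline{\lambda}x}$ satisfies it if and only if $\Phi^{(k)}(\lambda)=0$.

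Your argument instead reads off the pole order of the resolvent from the explicit formula \eqref{eq:3.27}, shows that the leading Laurent coefficient is a nonzero rank-one operator, and then invokes the standard identification \emph{pole order $=$ index of nilpotency $=$ algebraic multiplicity} (the last equality using $\dim\ker(P_{1,K}-\lambda_0)=1$). This is conceptually clean and avoids building any Jordan chain by hand; the price is that you must cite the Laurent expansion of the resolvent near an isolated eigenvalue of finite type (e.g.\ Kato or Gohberg--Kre\u{\i}n) rather than computing everything from scratch. The paper's approach, by contrast, is entirely self-contained and has the side benefit of explicitly identifying the root vectors of $P_{1,K}^*$, which are reused later (see Remark~\ref{rem:3.21}$(ii)$ and \eqref{eq:3.115}). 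Your parenthetical alternative --- solving $(P_{1,K}-\lambda_0)\phi_j=\phi_{j-1}$ directly --- is closer in spirit to the paper's method, except that the paper carries it out on the adjoint side where the root vectors have the particularly simple form $x^k e^{-i\overline{\lambda}x}$; doing it on $P_{1,K}$ itself would work but produces bulkier formulas.
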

	\begin{proof}
		Let $\lambda\in\sigma(P_{1,K})$ with algebraic multiplicity $m_a(\lambda,P_{1,K})=m+1$, where $m\in\mathbb{N}_0$. This implies that $\overline{\lambda}$ is in $\sigma(P_{1,K}^*)$ and moreover, by \cite[\S 4, Chapter I, Result 2.1 on page 10]{GK69} that $m_a(\overline{\lambda},P_{1,K}^*)=m+1$ as well. An explicit calculation shows that the root space $\mathcal{R}(\overline{\lambda},P_{1,K}^*)$ is spanned by the functions $\{e^{-i\overline{\lambda}x},xe^{-i\overline{\lambda}x},\dots,x^me^{-i\overline{\lambda}x}\}$, which all have to be elements of $\dom(P_{1,K}^*)$, i.e. they have to satisfy the boundary condition 
		\begin{equation} \label{eq:3.39}
			f(2\pi)=f(0)+\langle iK,f\rangle\:.
		\end{equation}
		It can be directly verified that the actual eigenfunction $\tfrac{1}{\sqrt{2\pi}}e^{-i\overline{\lambda}x}$ satisfying this condition is equivalent to $\Phi(\lambda)=0$. Now, let $\phi_k(x)=\tfrac{1}{\sqrt{2\pi}}x^ke^{-i\overline{\lambda}x}$ be a root function. Then, Condition \eqref{eq:3.39} being satisfied by $\phi_k$ is equivalent to
		\begin{equation} \label{eq:3.40}
			(2\pi)^ke^{-i2\pi\overline{\lambda}}=\int_0^{2\pi}\overline{iK(t)}t^ke^{-i\overline{\lambda}t}dt\:.
		\end{equation}
		On the other hand, computing the $k$-th derivative of $\Phi$ and evaluating it at $\lambda$ yields
		\begin{equation} \label{eq:3.41}
			\Phi^{(k)}(\lambda)=-(2\pi i)^ke^{2\pi i\lambda}-\int_0^{2\pi}(it)^ke^{i\lambda t}(-iK(t))dt\:.
		\end{equation}
		By taking the complex conjugate of \eqref{eq:3.40} and comparing it to \eqref{eq:3.41}, it is easily seen that $\phi_k\in\dom(P_{1,K}^*)$ if and only if $\Phi^{(k)}(\lambda)=0$, i.e. if $\lambda$ is a root of $\Phi$ with multiplicity $k$. This shows the theorem.
	\end{proof}
	Since we have shown that the algebraic multiplicity of any eigenvalue is given by the multiplicity of a root of an entire function, we conclude
	\begin{corollary}
		All root spaces of $P_{1,K}$ and $P_{1,K}^*$ are finite-dimensional.
	\end{corollary}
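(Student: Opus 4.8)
The plan is to obtain this as an immediate consequence of Theorem~\ref{thm:3.6} together with the observation that $\Phi$ is a \emph{nonzero} entire function. As already noted just before Theorem~\ref{thm:3.5}, the function $\Phi(\lambda)=1-e^{2\pi i\lambda}-\int_0^{2\pi}e^{i\lambda t}(-iK(t))\,dt$ is entire. The point I would verify is that $\Phi\not\equiv 0$: restricting to the imaginary axis and letting $\lambda=i\tau$ with $\tau\to+\infty$, one has $e^{2\pi i\lambda}=e^{-2\pi\tau}\to 0$, while $\int_0^{2\pi}e^{-\tau t}(-iK(t))\,dt\to 0$ by dominated convergence (the integrand is dominated by $|K|\in L^1(0,2\pi)$ and tends to $0$ pointwise a.e.\ on $(0,2\pi)$), so $\Phi(i\tau)\to 1$. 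Consequently $\Phi$ is not identically zero, and hence each of its zeros is isolated and of finite order.

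Now, given $\lambda\in\sigma(P_{1,K})$, Theorem~\ref{thm:3.6} identifies $m_a(\lambda,P_{1,K})$ with the order of $\lambda$ as a zero of $\Phi$, which is finite by the preceding paragraph. Therefore every root space $\mathcal{R}(\lambda,P_{1,K})$ is finite-dimensional.

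For the adjoint, I would invoke the same relation between the algebraic multiplicities of an operator and its adjoint that was already used in the proof of Theorem~\ref{thm:3.6}: if $\mu\in\sigma(P_{1,K}^*)$, then $\overline{\mu}\in\sigma(P_{1,K})$, and by \cite[\S 4, Chapter I, Result 2.1 on page 10]{GK69} one has $m_a(\mu,P_{1,K}^*)=m_a(\overline{\mu},P_{1,K})$, which is finite by the above. Hence all root spaces of $P_{1,K}^*$ are finite-dimensional as well. I do not expect any real obstacle in this argument; the only step that needs a brief justification is that $\Phi$ does not vanish identically, and this is settled by the asymptotics of $\Phi$ along the imaginary axis.
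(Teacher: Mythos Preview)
Your proposal is correct and matches the paper's approach exactly: the paper states this corollary as an immediate consequence of Theorem~\ref{thm:3.6}, noting in one sentence that the algebraic multiplicity equals the order of a zero of an entire function. You supply more detail than the paper does---in particular the explicit verification that $\Phi\not\equiv 0$ and the separate handling of $P_{1,K}^*$ via \cite[\S 4, Chapter I, Result 2.1]{GK69}---but these are precisely the implicit ingredients behind the paper's one-line justification.
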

	It turns out that the zeros of functions of the form $\Phi(\lambda)$ have been studied in different contexts before in the 1960's \cite{AB62, C64}. Most useful for our purposes will be the following result by Anselone/Boas and Cartwright:
	\begin{proposition} \label{prop:2.4} Let $\Psi(z)$ be given by
		\begin{equation} \label{eq:2.39}
			\Psi(z)=1-e^z-\int_{-1}^1 e^{-zs}\hat{K}(s)ds
		\end{equation}
		for some $\hat{K}\in L^1(-1,1)$. Then, for each $\varepsilon>0$ and each $\nu=0,\pm 1,\pm 2, \dots$ let $\Gamma_\varepsilon(2\nu \pi i)$ denote the circle with center $2\nu\pi i$ and radius $\varepsilon$. For each $\varepsilon, 0 < \varepsilon <1/2$, there exists $\nu_\varepsilon$ such that if $|\nu|\geq \nu_\varepsilon$, then $\Psi(z)$ has just one zero inside $\Gamma_\varepsilon(2\nu\pi i)$. Furthermore, each of these zeros is simple, i.e.\ of order one. Moreover, $\Psi(z)$ has only finitely many additional zeros if and only if $\hat{K}(s)=0$ for almost every $s\in(0,1)$. 
	\end{proposition}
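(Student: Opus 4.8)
The plan is to regard $\Psi$ as a perturbation of $1-e^z$, whose zeros are precisely the simple zeros $\{2\nu\pi i\}_{\nu\in\mathbb{Z}}$, and to control the integral term through a \emph{uniform} Riemann--Lebesgue estimate. On the circle $\Gamma_\varepsilon(2\nu\pi i)$, writing $z=2\nu\pi i+\varepsilon e^{i\theta}$ we have $1-e^z=1-e^{\varepsilon e^{i\theta}}$ independently of $\nu$, so
\begin{equation*}
c(\varepsilon):=\min_{0\le\theta\le 2\pi}\bigl|1-e^{\varepsilon e^{i\theta}}\bigr|>0\qquad(0<\varepsilon<1/2),
\end{equation*}
uniformly in $\nu$. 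For the integral term put $h_\theta(s):=e^{-\varepsilon e^{i\theta}s}\hat K(s)$, so that $\int_{-1}^1 e^{-zs}\hat K(s)\,ds=\int_{-1}^1 e^{-2\nu\pi i s}h_\theta(s)\,ds$ is, up to a constant, the $\nu$-th Fourier coefficient of $h_\theta\in L^1(-1,1)$. Since $\theta\mapsto h_\theta$ is continuous from $[0,2\pi]$ into $L^1(-1,1)$, its range is compact there, and Riemann--Lebesgue is uniform on compact subsets of $L^1$; hence $\sup_\theta\bigl|\int_{-1}^1 e^{-zs}\hat K(s)\,ds\bigr|\to 0$ as $|\nu|\to\infty$. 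Choosing $\nu_\varepsilon$ so that this supremum is $<c(\varepsilon)$ for $|\nu|\ge\nu_\varepsilon$, Rouch\'e's theorem applied to $\Psi$ and $1-e^z$ on $\Gamma_\varepsilon(2\nu\pi i)$ yields exactly one zero of $\Psi$ inside, counted with multiplicity --- hence that zero is simple. The same two bounds, used instead on the part of a strip $\{|\Re z|\le M\}$ lying outside all the $\Gamma_\varepsilon(2\nu\pi i)$ (where $|1-e^z|$ is bounded below by a constant depending only on $\varepsilon,M$ by $2\pi i$-periodicity, and $\int_{-1}^1 e^{-zs}\hat K(s)\,ds\to 0$ as $|\Im z|\to\infty$ by the same compactness argument applied to $\{e^{-\xi s}\hat K(s):|\xi|\le M\}$), show that $\Psi$ has no zeros with $|\Re z|\le M$ and $|\Im z|$ large outside $\bigcup_\nu\Gamma_\varepsilon(2\nu\pi i)$.

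For the dichotomy, split $\hat K=\hat K\chi_{(-1,0)}+\hat K\chi_{(0,1)}$ and write $\int_{-1}^1 e^{-zs}\hat K(s)\,ds=g_-(z)+g_+(z)$ with $g_-(z)=\int_0^1 e^{zu}\hat K(-u)\,du$ and $g_+(z)=\int_0^1 e^{-zu}\hat K(u)\,du$; each of $1,e^z,g_-,g_+$ is bounded on $i\mathbb{R}$. If $\hat K=0$ a.e.\ on $(0,1)$, then $g_+\equiv 0$ and $\Psi(z)=1-\int_{[0,1]}e^{zu}\,d\mu(u)$ with $\mu=\delta_1+\hat K(-\,\cdot)\,du$ a complex measure on $[0,1]$ without atom at $0$. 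Dominated convergence gives $\int_{[0,1]}e^{zu}\,d\mu(u)\to\mu(\{0\})=0$ uniformly in $\Im z$ as $\Re z\to-\infty$, so $\Psi(z)\to 1$ there, while $e^{-z}\int_{[0,1]}e^{zu}\,d\mu(u)\to\mu(\{1\})=1$ uniformly in $\Im z$ as $\Re z\to+\infty$, so $|\Psi(z)|\to\infty$ there. Thus every zero of $\Psi$ has $|\Re z|\le M$ for some $M$, and by the last part of the previous paragraph only finitely many of them lie outside $\bigcup_\nu\Gamma_\varepsilon(2\nu\pi i)$.

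Conversely, suppose $\hat K\not\equiv 0$ on $(0,1)$, i.e.\ $g_+\not\equiv 0$, and assume for contradiction that $\Psi$ has only finitely many zeros outside $\bigcup_\nu\Gamma_\varepsilon(2\nu\pi i)$. Since $\Psi$ is bounded on $i\mathbb{R}$ and of exponential type, the rotated function $z\mapsto\Psi(iz)$ belongs to the Cartwright class, so by the density theorem for that class the zero-counting function of $\Psi$ satisfies $n(r)/r\to\bigl(h_\Psi(0)+h_\Psi(\pi)\bigr)/\pi$, where $h_\Psi$ is the indicator of $\Psi$. Here $h_\Psi(0)=1$ because $\Psi(x)=-e^x(1+o(1))$ as $x\to+\infty$ (the terms $1$ and $g_\pm(x)$ being $o(e^x)$ there), while $h_\Psi(\pi)=h_{g_+}(\pi)>0$: indeed $h_{1-e^z}(\pi)=0$ and $h_{g_-}(\pi)\le 0$, whereas $h_{g_+}(\pi)=\operatorname{ess\,sup}\operatorname{supp}\bigl(\hat K|_{(0,1)}\bigr)>0$ by Titchmarsh's theorem on the support of a Laplace transform (alternatively, a nonconstant entire function of exponential type bounded on $i\mathbb{R}$ cannot have nonpositive indicator everywhere, by Phragm\'en--Lindel\"of and Liouville, and $h_{g_+}\le 0$ in every direction of the right half-plane, so $h_{g_+}(\pi)>0$ by trigonometric convexity). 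On the other hand the zeros sitting inside the $\Gamma_\varepsilon(2\nu\pi i)$ contribute density exactly $1/\pi$ and the finitely many remaining zeros contribute density $0$, so $n(r)/r\to 1/\pi$; this forces $h_\Psi(\pi)=0$, a contradiction. (One can dispense with the density theorem by instead using Hadamard factorisation: matching the canonical products of $\Psi$ and $1-e^z$, the hypothesis of finitely many extra zeros would give $\Psi(z)=e^{az+b}P(z)Q(z)^{-1}(1-e^z)$ with $P,Q$ polynomials, whence $h_\Psi(0)=\Re a+1$ and $h_\Psi(\pi)=-\Re a$, so $h_\Psi(0)=1$ again forces $h_\Psi(\pi)=0$, contradicting $h_\Psi(\pi)=h_{g_+}(\pi)>0$.)

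\medskip
\noindent\emph{On the main obstacle.} The Rouch\'e step, the simplicity, and the direction ``$\hat K=0$ on $(0,1)\Rightarrow$ finitely many extra zeros'' are routine once the uniform Riemann--Lebesgue bound is set up. The delicate point is the converse of the dichotomy: promoting the qualitative hypothesis $\hat K\not\equiv 0$ on $(0,1)$ first to strict positivity of $h_{g_+}(\pi)$ and then to an \emph{infinitude} of off-lattice zeros. This is precisely where the theory of entire functions of exponential type really enters (indicator calculus, Titchmarsh's support theorem, Phragm\'en--Lindel\"of, and the density theorem for the Cartwright class), and it is the core of the arguments in \cite{AB62,C64}.
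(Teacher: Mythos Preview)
The paper does not supply its own proof of this proposition; it is quoted as a known result from \cite{AB62} and \cite{C64} and then applied as a black box in the proof of Theorem~\ref{thm:3.7}. So there is no ``paper's proof'' to compare against directly. Your write-up is essentially a reconstruction of the arguments in those references, and the overall architecture --- Rouch\'e on the circles $\Gamma_\varepsilon(2\nu\pi i)$ using a uniform Riemann--Lebesgue bound, then growth estimates in vertical strips for the ``$\hat K=0$ on $(0,1)$'' direction, then indicator/Cartwright-class arguments for the converse --- is correct and is indeed how \cite{AB62,C64} proceed.

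One genuine gap: your parenthetical Hadamard alternative is not right as stated. You write that under the finitely-many-extra-zeros hypothesis one would get $\Psi(z)=e^{az+b}P(z)Q(z)^{-1}(1-e^z)$ with $P,Q$ polynomials. But the ``regular'' zeros of $\Psi$ inside the circles $\Gamma_\varepsilon(2\nu\pi i)$ are only \emph{near} $2\nu\pi i$, not \emph{at} $2\nu\pi i$, so the zero sets of $\Psi$ and $1-e^z$ do not differ by a finite set, and the ratio of their canonical products is not a rational function times an exponential. The density-theorem route you give first is the correct one and does not suffer from this; I would simply delete the parenthetical.

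A smaller point: your trigonometric-convexity argument that $h_{g_+}(\pi)>0$ is a bit compressed. Knowing $h_{g_+}\le 0$ on $[-\pi/2,\pi/2]$ and that $h_{g_+}$ is not everywhere $\le 0$ does not by itself single out $\theta=\pi$; one also needs that $h_{g_+}(\pi)\le 0$ together with $h_{g_+}(\pm\pi/2)\le 0$ forces $h_{g_+}\le 0$ on each of $[\pi/2,\pi]$ and $[\pi,3\pi/2]$ (apply trigonometric convexity on these half-intervals of length $\pi/2$, where the interpolating sinusoid through two nonpositive endpoint values stays nonpositive), hence everywhere, and then invoke boundedness on $i\bbR$ plus Phragm\'en--Lindel\"of to get a contradiction. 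Your primary citation of Titchmarsh's support theorem already gives $h_{g_+}(\pi)=\esssup\supp(\hat K|_{(0,1)})>0$ directly and is the cleaner way to close this step.
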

	Applying this now to the function $\Phi(\lambda)$ implies the following result about the spectrum of $P_{1,K}$:
	\begin{theorem} \label{thm:3.7}
		For every $\varepsilon>0$, there exists $N_\varepsilon\in\mathbb{N}$ such that if $n\in\mathbb{Z}$, $|n|\geq N_\varepsilon$, there is exactly one eigenvalue $\lambda\in\sigma(P_{1,K})$ with $|\lambda-n|<\varepsilon$. Moreover, $P_{1,K}$ has only finitely many additional eigenvalues.
	\end{theorem}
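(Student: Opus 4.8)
The plan is to recognize $\Phi$ as an instance of the function $\Psi$ in Proposition~\ref{prop:2.4} after an affine change of the spectral variable, and then to transport the conclusion back. Concretely, I would substitute $z = 2\pi i\lambda$, so that $e^{2\pi i\lambda} = e^{z}$, and in the integral term of \eqref{eq:3.38} change variables by $t = -2\pi s$, which sends $t\in(0,2\pi)$ to $s\in(-1,0)$ with $dt = -2\pi\,ds$ and $e^{i\lambda t} = e^{-zs}$. Bookkeeping the signs and the limits of integration yields
\begin{equation*}
	\Phi(\lambda) = 1 - e^{z} - \int_{-1}^{1} e^{-zs}\,\hat{K}(s)\,ds = \Psi(z),\qquad z = 2\pi i\lambda,
\end{equation*}
where $\hat{K}(s) := -2\pi i\,K(-2\pi s)$ for $s\in(-1,0)$ and $\hat{K}(s) := 0$ for $s\in(0,1)$. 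Since $K\in L^2(0,2\pi)\subseteq L^1(0,2\pi)$, the same substitution gives $\|\hat{K}\|_{L^1(-1,1)} = \|K\|_{L^1(0,2\pi)}<\infty$, so $\hat{K}\in L^1(-1,1)$ and Proposition~\ref{prop:2.4} applies to $\Psi$.

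Next I would transport the localization statement through the affine bijection $\lambda\mapsto z = 2\pi i\lambda$, which maps $\{|\lambda - n|<\delta\}$ onto $\{|z - 2n\pi i|<2\pi\delta\}$ and the integers onto the points $2n\pi i$; thus $\lambda$ is a zero of $\Phi$ of order $m$ precisely when $z=2\pi i\lambda$ is a zero of $\Psi$ of order $m$. Given $\varepsilon>0$ (it suffices to consider $\varepsilon$ with $2\pi\varepsilon<1/2$), put $\varepsilon_0 := 2\pi\varepsilon$ and let $N_\varepsilon := \nu_{\varepsilon_0}$ be the index furnished by Proposition~\ref{prop:2.4}. Then for $n\in\mathbb{Z}$ with $|n|\geq N_\varepsilon$, $\Psi$ has exactly one zero inside $\Gamma_{\varepsilon_0}(2n\pi i)$, hence $\Phi$ has exactly one zero in $\{\lambda\in\mathbb{C}:|\lambda - n|<\varepsilon\}$. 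Since by Theorem~\ref{thm:3.1} together with Lemma~\ref{lemma:3.4} (which identifies $\sigma(P_{1,K})$ with the zero set of $\Phi$ via \eqref{eq:3.38}) the spectrum of $P_{1,K}$ consists exactly of the zeros of $\Phi$, this is the asserted ``exactly one eigenvalue'' statement.

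For the remaining claim that $P_{1,K}$ has only finitely many eigenvalues outside these small disks, I would invoke the final assertion of Proposition~\ref{prop:2.4}: $\Psi$ has only finitely many additional zeros if and only if $\hat{K}$ vanishes almost everywhere on $(0,1)$, and by construction $\hat{K}\equiv 0$ on $(0,1)$. Transporting back through $\lambda\mapsto 2\pi i\lambda$ once more shows that $\sigma(P_{1,K})$ has only finitely many points outside $\bigcup_{|n|\geq N_\varepsilon}\{\lambda\in\mathbb{C}:|\lambda - n|<\varepsilon\}$. There is no real conceptual obstacle in this argument; the only points demanding care are getting the orientation and endpoints right in the substitution $t=-2\pi s$, checking $\hat{K}\in L^1(-1,1)$, and tracking the factor $2\pi$ relating the radii on the $z$-side to those on the $\lambda$-side so that $N_\varepsilon$ is chosen consistently.
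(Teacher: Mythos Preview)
Your proof is correct and follows essentially the same route as the paper's: both identify $\Phi$ with $\Psi$ from Proposition~\ref{prop:2.4} via the substitution $z=2\pi i\lambda$ and the change of variable $t=-2\pi s$, obtaining the same $\hat{K}$ supported on $(-1,0)$, and then read off the conclusion. If anything, you are slightly more explicit than the paper about the $2\pi$ scaling between radii on the $z$- and $\lambda$-sides and about the restriction $2\pi\varepsilon<1/2$ needed to invoke the proposition.
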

	\begin{proof} By Theorem \ref{thm:3.1}, the spectrum of $P_{1,K}$ consists only of eigenvalues and is given by the zeros of $\Phi(\lambda)$. For $z\in\mathbb{C}$, consider the function
		\begin{align}
			\Phi\left(\frac{z}{2\pi i}\right)=&1-e^z-\int_0^{2\pi}e^{\frac{z}{2\pi}t}(-iK(t))dt\\=&1- e^z-\int_{-1}^0e^{-zs}\left(-2\pi i K(-2\pi s)\right) ds   \:,
			\label{eq:2.41}
		\end{align}
		where the last equality follows from a simple substitution. Since $K(x)=W(x)k(x)/W(2\pi)$, where $k\in L^2(0,2\pi)$, this also implies that $K\in L^2(0,2\pi)$ and consequently, the function 
		\begin{equation} \label{eq:2.42}
			\hat{K}(s)=\begin{cases}-2\pi iK(-2\pi s)\quad&\mbox{if}\quad s\in(-1,0)\\ 0 \quad&\mbox{if}\quad s\in(0,1)\end{cases}
		\end{equation}
		is in $L^1(-1,1)$ and satisfies $\hat{K}(s)=0$ for almost every $s\in(0,1)$. Hence, we can apply Proposition \ref{prop:2.4} to $\Psi(z)=\Phi(\frac{z}{2\pi i})$ with $\hat{K}$ in \eqref{eq:2.39} being the function given by \eqref{eq:2.42}. The statement about the zeros of $\Phi(\lambda)$, which are the eigenvalues of $P_{1,K}$, and thus the theorem follows.
	\end{proof}
	
	We further refine this result in two directions: $(i)$ in Theorem \ref{thm:3.11a}, we will give a more quantitative estimate about the radii of the circles around $n\in\mathbb{Z}$ such that we can find a unique simple eigenvalue of $P_{1,K}$ inside it and $(ii)$ in Theorem \ref{thm:3.13}, we will give a more quantitative statement about the finitely many other eigenvalues that may not be found inside these small circles. 
	
	To this end, we need to introduce \emph{doubly infinite Hilbert matrices} which are bounded operators in $\ell^2(\mathbb{Z})$ (cf.\ \cite[Thm.\ 2]{Hill61}):
	
	\begin{definition}
		Let $\eta\in\mathbb{C}\setminus\mathbb{Z}$. Then the doubly infinite Hilbert matrix $\mathcal{T}_\eta$ is the operator in $\ell^2(\mathbb{Z})$ given by
		\begin{equation}
			\mathcal{T}_\eta:\quad\ell^2(\mathbb{Z})\rightarrow\ell^2(\mathbb{Z}),\quad (\mathcal{T}_\eta a)_n=\sum_{m\in\mathbb{Z}}\frac{1}{m+n+\eta}a_m\:.
		\end{equation}
	\end{definition}
	We then get the following two theorems:
	\begin{theorem} \label{thm:3.11a}
		Fix $K\in L^2(0,2\pi)$. Then, there exists $N=N(K)\in\mathbb{N}$ such that for every $n\in\mathbb{Z}$ with $|n|\geq N$ and $\langle \psi_n,K\rangle\neq 0$, the function $\Phi$ has exactly one simple root inside the disk with center point $n$ and radius $|\langle \psi_n,K\rangle|$. Moreover, for every $n$ with $|n|\geq N$ and $\langle \psi_n,K\rangle=0$, the number $n$ is a simple zero of $\Phi$ and there are no other zeros of $\Phi$ in any disk with center point $n$ and radius less than $1/2$.
	\end{theorem}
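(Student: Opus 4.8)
The plan is to substitute $\lambda=n+z$ into \eqref{eq:3.38} and to use the Fourier expansion $K=\sum_{m\in\mathbb{Z}}c_m\psi_m$, $c_m:=\langle\psi_m,K\rangle$, with respect to the orthonormal basis \eqref{eq:3.36}, so as to obtain an exact factorization of $\Phi(n+z)$. Expanding $\int_0^{2\pi}e^{i(n+z)t}K(t)\,dt$ term by term (legitimate since $t\mapsto e^{i(n+z)t}$ lies in $L^2(0,2\pi)$ and the Fourier partial sums of $K$ converge in $L^2$), and using $\int_0^{2\pi}e^{i\alpha t}\,dt=(e^{2\pi i\alpha}-1)/(i\alpha)$ together with $e^{2\pi i(n-m)}=1$, one obtains, for $0<|z|<1$,
\begin{equation*}
\Phi(n+z)=\frac{1-e^{2\pi iz}}{z}\,\widetilde{\Xi}_n(z),\qquad
\widetilde{\Xi}_n(z):=z-\frac{c_n}{\sqrt{2\pi}}-\frac{z}{\sqrt{2\pi}}\sum_{m\neq n}\frac{c_m}{n-m+z}.
\end{equation*}
Here the series $S_n(z):=\sum_{m\neq n}c_m/(n-m+z)$ converges absolutely for $|z|\le 1/2$ by Cauchy--Schwarz (as $(c_m)\in\ell^2(\mathbb{Z})$ and $\big((n-m+z)^{-1}\big)_{m\neq n}\in\ell^2$), both displayed factors are analytic on $|z|<1$, and $(1-e^{2\pi iz})/z$ is entire and zero-free there. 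Consequently, for each $r\in(0,1)$ the zeros of $\Phi$ in $\overline{D(n,r)}$ coincide, with multiplicity, with those of $\widetilde{\Xi}_n$ in $\overline{D(0,r)}$.

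Next I would record two elementary facts: $|c_n|=|\langle\psi_n,K\rangle|\to 0$ as $|n|\to\infty$ by Bessel's inequality, and $\sigma_n:=\sup_{|z|\le 1/2}|S_n(z)|\to 0$ as $|n|\to\infty$. The latter follows by splitting the sum defining $S_n$ at $|m|=M$: the tail over $|m|\ge M$ is bounded, uniformly in $n$ and in $z$ with $|z|\le\tfrac12$, by $\big(\sum_{|m|\ge M}|c_m|^2\big)^{1/2}\big(\sum_{k\neq 0}(|k|-\tfrac12)^{-2}\big)^{1/2}$, while for $|n|\ge 2M$ the head over $|m|<M$ is at most $(M-\tfrac12)^{-1}\sum_{|m|<M}|c_m|\le\sqrt{2M}\,\|K\|_{L^2}/(M-\tfrac12)$, and both are small once $M$ is large. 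The same estimate yields a uniform bound $\sup_{|z|\le 1/2}|S_n'(z)|\le B<\infty$. I then fix $N=N(K)\in\mathbb{N}$ so that $|c_n|<\tfrac12$, $\sigma_n<\tfrac12\sqrt{2\pi}$, and $2|c_n|B/\pi<\tfrac12$ hold for all $|n|\ge N$.

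For the dichotomy I distinguish two cases. If $|n|\ge N$ and $c_n\neq 0$, then, rewriting $\widetilde{\Xi}_n(z)=z\big(1-\tfrac{1}{\sqrt{2\pi}}S_n(z)\big)-\tfrac{c_n}{\sqrt{2\pi}}$, the equation $\widetilde{\Xi}_n(z)=0$ is equivalent, on $|z|\le\tfrac12$, to the fixed-point equation $z=T_n(z)$ with $T_n(z):=c_n/(\sqrt{2\pi}-S_n(z))$ (the denominator not vanishing, since $|S_n|\le\sigma_n<\sqrt{2\pi}$). The choice of $N$ gives $|T_n(z)|\le 2|c_n|/\sqrt{2\pi}<|c_n|$ and $|T_n'(z)|\le 2|c_n|B/\pi<\tfrac12$ on $\overline{D(0,|c_n|)}$, so $T_n$ maps that closed disk into itself and is a contraction; by the contraction mapping principle it has a unique fixed point $z_n^{\ast}$, necessarily nonzero since $T_n(0)\neq 0$. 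Hence $\widetilde{\Xi}_n$, and therefore $\Phi$, has exactly one zero in the disk of radius $|c_n|=|\langle\psi_n,K\rangle|$ about $n$; it is simple because $\widetilde{\Xi}_n'(z_n^{\ast})$ differs from $1$ by at most $(\sigma_n+|c_n|B)/\sqrt{2\pi}<1$. If instead $|n|\ge N$ and $c_n=0$, then $\widetilde{\Xi}_n(z)=z\big(1-\tfrac{1}{\sqrt{2\pi}}S_n(z)\big)$ and $\big|1-\tfrac{1}{\sqrt{2\pi}}S_n(z)\big|\ge 1-\sigma_n/\sqrt{2\pi}>0$ for $|z|\le\tfrac12$, so the only zero of $\widetilde{\Xi}_n$ in that disk is the simple zero at $z=0$; thus $n$ is a simple zero of $\Phi$, and $\Phi$ has no other zero in any disk of radius less than $1/2$ about $n$.

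The heart of the matter is the factorization $\Phi(n+z)=(1-e^{2\pi iz})\,z^{-1}\,\widetilde{\Xi}_n(z)$: once it is in place, the statement reduces to the decay $\sigma_n\to 0$ and a routine contraction-mapping argument. The points needing care are the justification of the termwise integration and of the absolute convergence of $S_n$ (both via Cauchy--Schwarz, using $(c_m)\in\ell^2$), and checking that the finitely many conditions defining $N(K)$ can be met simultaneously, which is immediate from $|c_n|\to 0$ and $\sigma_n\to 0$.
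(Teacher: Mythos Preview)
Your proof is correct, and it arrives at the result via a route that, while sharing the same underlying Fourier computation with the paper, finishes differently.

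Both arguments expand $K$ in the basis $\{\psi_m\}$ and use $\int_0^{2\pi}e^{i(n-m+z)t}\,dt=(e^{2\pi iz}-1)/(i(n-m+z))$; this is precisely what lies behind the paper's identity $|g(n+\eta)|=|f(n+\eta)|\,(2\pi)^{-1/2}|(\mathcal{T}_\eta a(K))_n|$, where $\mathcal{T}_\eta$ is the doubly infinite Hilbert matrix. From there the paper proceeds by Rouch\'e: it shows $(2\pi)^{-1/2}|(\mathcal{T}_\eta a(K))_n|<1$ on the circle $|\eta|=|c_n|$ (or on all $|\eta|<1/2$ when $c_n=0$) via a chain of inequalities comparing $\mathcal{T}_\eta$ to $\mathcal{T}_{1/2}$, and then invokes Rouch\'e to transfer the zero count of $1-e^{2\pi i\lambda}$ to $\Phi$. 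You instead make the factorization $\Phi(n+z)=\dfrac{1-e^{2\pi iz}}{z}\,\widetilde{\Xi}_n(z)$ explicit, peel off the nonvanishing entire factor, and locate the unique zero of $\widetilde{\Xi}_n$ by the contraction mapping principle applied to $T_n(z)=c_n/(\sqrt{2\pi}-S_n(z))$. Your tail estimate $\sigma_n\to 0$ (split at $|m|=M$, Cauchy--Schwarz on the tail, elementary bound on the head for $|n|\ge 2M$) is a direct substitute for the paper's Hilbert-matrix estimates. What your approach buys is that simplicity drops out of the derivative bound on $\widetilde{\Xi}_n$ rather than from Rouch\'e's multiplicity count, and the argument is entirely self-contained; what the paper's Rouch\'e approach buys is a cleaner parallel with the proof of the companion Theorem~\ref{thm:3.13}, where the same $f$/$g$ decomposition is reused on rectangular contours.
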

	\begin{proof} 
		Let $n\in\mathbb{Z}$ and $\eta\in\mathbb{C}$. Then, decomposing $\Phi(\lambda)=f(\lambda)+g(\lambda)$, where 
		\begin{equation}
			f(\lambda)= 1-e^{2\pi i \lambda}\quad\mbox{and}\quad g(\lambda)=i\int_0^{2\pi} e^{i\lambda t}K(t)dt\:,
		\end{equation}
		note that $|f(n+\eta)|=|1-e^{2\pi i\eta}|$, while for $|g(n+\eta)|$, we obtain
		\begin{align}
			&|g(n+\eta)|=\left|\int_0^{2\pi}e^{i(n+\eta) t}K(t)dt\right|=|\langle e^{-i(n+\overline{\eta})t},K\rangle|\\=&\left|\sum_{m=-\infty}^\infty\langle e^{-i(n+\overline{\eta})t},\psi_{-m}\rangle\langle \psi_{-m},K\rangle\right|\\=&\frac{1}{\sqrt{2\pi}}\left|\sum_{m=-\infty}^\infty \left(\int_0^{2\pi}e^{i(n+\eta+m)t}dt\right)\langle \psi_{-m},K\rangle\right|\\
			=&\frac{1}{\sqrt{2\pi}}\left|1-e^{2\pi i \eta}\right|\left|\sum_{m=-\infty}^\infty \frac{1}{n+m+\eta}(a(K))_m\right|\\
			=&|f(n+\eta)| \frac{1}{\sqrt{2\pi}} \left|(\mathcal{T}_\eta a(K) )_n\right|\:, \label{eq:3.52}
		\end{align}
		where $a(K)\in\ell^2(\mathbb{Z})$ is the sequence defined by $(a(K))_m:=\langle \psi_{-m},K\rangle$.

		In what follows, we will abbreviate $a:=a(K)$. Then, note that for any $\eta\in\mathbb{C}$ with $|\eta|=\varepsilon<1/2$ we have
		\begin{align}
			&|(\mathcal{T}_\eta a)_n|\label{eq:3.53}\\&=\left|\sum_{m=-\infty}^\infty \frac{1}{n+m+\eta}a_m\right|\leq \frac{|a_{-n}|}{\varepsilon}+\left|\sum_{m\neq -n}\frac{a_m}{n+m+\eta}\right|\\&\leq \frac{|a_{-n}|}{\varepsilon}+\left|\sum_{m\neq -n}\frac{(1/2-\eta)a_m}{(n+m+\eta)(n+m+1/2)}\right|+\left| \sum_{m\neq -n}\frac{a_m}{n+m+1/2}\right|\\
			&= \frac{|a_{-n}|}{\varepsilon}+\left|\sum_{m\neq -n}\frac{(1/2-\eta)a_m}{(n+m+\eta)(n+m+1/2)}\right|+|(\mathcal{T}_{1/2}a)_n-2a_{-n}|\\&\leq \left(2+\frac{1}{\varepsilon}\right)|a_{-n}|+|(\mathcal{T}_{1/2}a)_n|+|1/2-\eta|\sum_{m\neq -n}\frac{|a_m|}{(|n+m|-\varepsilon)(|n+m|-1/2)}\\&\leq \left(2+\frac{1}{\varepsilon}\right)|a_{-n}|+|(\mathcal{T}_{1/2}a)_n|+\sum_{m\neq -n}\frac{|a_m|}{(|n+m|-1/2)^2}\\&\leq \left(2+\frac{1}{\varepsilon}\right)|a_{-n}|+|(\mathcal{T}_{1/2}a)_n|+\sum_{m\in\mathbb{Z}}\frac{|a_m|}{(|n+m|-1/2)^2}\:. \label{eq:3.59}
		\end{align}
		Let us now individually focus on the three terms that occur in \eqref{eq:3.59}. Since $a\in\ell^2(\mathbb{Z})$ and $(\mathcal{T}_{1/2}a)\in \ell^2(\mathbb{Z})$, there exists an $N_1=N_1(K)\in\mathbb{N}$ such that $2|a_{-n}|+|(\mathcal{T}_{1/2}a)_n|<\frac{1}{2}(\sqrt{2\pi}-1)$ for all $n\in\mathbb{Z}$ with $|n|\geq N_1$. 
		
		Next, we focus on the behavior of the last term in \eqref{eq:3.59} as $|n|\rightarrow\infty$:
		\begin{align} \label{eq:3.61}
			&\lim_{n\rightarrow\pm\infty}\sum_{m\in\mathbb{Z}}\frac{|a_m|} {(|n+m|-1/2)^2}=\lim_{n\rightarrow\pm\infty}\sum_{m\in\mathbb{Z}}\frac{|a_{m-n}|}{(|m|-1/2)^{2}}\\&=\sum_{m\in\mathbb{Z}}\left(\frac{\lim_{n\rightarrow\pm\infty}|a_{m-n}|}{(|m|-1/2)^{2}}\right)=0\:,
		\end{align}
		since $a$ is an $\ell^2(\mathbb{Z})$ sequence. Interchanging the limit and summation is justified by dominated convergence, since for every $n\in\mathbb{Z}$ we have the estimate
		\begin{equation}
			\frac{|a_{m-n}|}{(|m|-1/2)^{2}}\leq \frac{\|a\|_\infty}{(|m|-1/2)^2}\:, \label{eq:3.63}
		\end{equation}
		which is summable. Consequently, there exists $N_2=N_2(K)\in\mathbb{N}$ such that the third term in \eqref{eq:3.59} satisfies
		\begin{equation}
			\sum_{m\in\mathbb{Z}}\frac{|a_m|}{(|n+m|-1/2)^2}< \frac{1}{2}(\sqrt{2\pi}-1)
		\end{equation}
		for all $n\in\mathbb{Z}$ with $|n|\geq N_2$. Assuming $|a_{-n}|\neq0$, we choose $\varepsilon=|a_{-n}|$ and further estimate \eqref{eq:3.59}:
		\begin{equation}
			|(T_\eta a)_n|\leq \eqref{eq:3.59}<  \frac{1}{2}(\sqrt{2\pi}-1)+1+\frac{1}{2}(\sqrt{2\pi}-1)=\sqrt{2\pi}\:,
		\end{equation}
		for any $n\in\mathbb{Z}$ such that $|n|\geq \max\{N_1,N_2,N_3\}=:N$, where $N_3=N_3(K)$ is chosen such that $\varepsilon=|a_{-n}|<1/2$, for all $n\in\mathbb{Z}$ with $|n|\geq N_3$, which was needed to estimate the term $|(\mathcal{T}a)_n|$ in \eqref{eq:3.53} and after.
		
		Hence, with the choice $a=a(K)$, this shows that if $|n|\geq N$, we have 
		\begin{equation}
			|g(n+\eta)|=|f(n+\eta)|\frac{1}{\sqrt{2\pi}}|(\mathcal{T}_\eta a(K))_n|< |f(n+\eta)|\:.
		\end{equation}
		Since the roots (all of them simple) of $f$ are given by the integers, it follows from Rouch\'e's theorem that $\Phi$ has exactly one simple root inside the disk of radius $|a_{-n}(K)|=|\langle \psi_n,K\rangle|$ with center point $n$. 
		
		Next, consider the case $a_{-n}=0$. By direct inspection, it can be seen that $n$ itself is a root of $\Phi$, i.e.\ $\Phi(n)=0$. Moreover, the estimate \eqref{eq:3.59} simplifies to
		\begin{equation}
			|(\mathcal{T}_\eta a)_n|\leq |(\mathcal{T}_{1/2}a)_n|+\sum_{m\in\mathbb{Z}}\frac{|a_m|}{(|n+m|-1/2)^2}
		\end{equation}
		for any $\eta$ with $|\eta|<1/2$. By the same reasoning as above, for $|n|\geq N$, we have $|(\mathcal{T}_\eta a)_n|<\sqrt{2\pi}$, and consequently $|g(n+\eta)|<|f(n+\eta)|$ for any $\eta$ with $|\eta|<1/2$. Hence, $\Phi$ has no other root  inside any disk of radius less than $1/2$. 
		
		This finishes the proof.
	\end{proof}
	It is, of course, well-known that smoothness properties of a function $K\in L^2(0,2\pi)$ can be related to the decay of its Fourier coefficients, see, e.~g.~ \cite[I.4, I.5]{Katz04}. As an example of a simple application of the previous theorem, we deduce the following corollary for the case that $K\in\dom((P_{1,0})^{j})$ for some $j\in\mathbb{N}$.
	\begin{corollary} Let $K\in\dom((P_{1,0})^{j})$ for some $j\in\mathbb{N}$. Then, the radii $r_n$ of the disks with center point $n$, inside which exactly one simple root of $\Phi(\lambda)$ can be found, satisfy $r_n=o(|n|^{-j})$ as $n\rightarrow\pm \infty$.
	\end{corollary}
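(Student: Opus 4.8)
The plan is to read off the radii from Theorem~\ref{thm:3.11a} and then invoke the spectral decomposition of the selfadjoint operator $P_{1,0}$. First I would recall that, by Theorem~\ref{thm:3.11a}, there is $N=N(K)\in\mathbb{N}$ such that for every $n\in\mathbb{Z}$ with $|n|\geq N$ the disk around $n$ containing exactly one simple root of $\Phi$ can be taken to have radius $r_n=|\langle\psi_n,K\rangle|$ (in the degenerate case $\langle\psi_n,K\rangle=0$ the point $n$ is itself a simple zero with no other zero within distance $1/2$, so $r_n=0$, which is trivially consistent with the asserted bound). Thus the corollary reduces to the purely Fourier-analytic claim that $|\langle\psi_n,K\rangle|=o(|n|^{-j})$ as $n\to\pm\infty$.

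For that claim I would use that $P_{1,0}$ is selfadjoint with orthonormal eigenbasis $\{\psi_n\}_{n\in\mathbb{Z}}$ and $P_{1,0}\psi_n=n\psi_n$ (indeed $i\psi_n'=n\psi_n$ for $\psi_n(x)=\tfrac{1}{\sqrt{2\pi}}e^{-inx}$, as recorded in \eqref{eq:3.36}). Consequently $(P_{1,0})^j$ is selfadjoint with the same eigenbasis and eigenvalues $n^j$, and by the spectral theorem its domain consists exactly of those $K=\sum_{n\in\mathbb{Z}}\langle\psi_n,K\rangle\psi_n\in L^2(0,2\pi)$ for which $\sum_{n\in\mathbb{Z}}|n|^{2j}\,|\langle\psi_n,K\rangle|^2<\infty$; equivalently, integrating $\langle\psi_n,(P_{1,0})^jK\rangle$ by parts $j$ times, the periodicity of $\psi_n$ together with the periodicity of $K,K',\dots,K^{(j-1)}$ built into $\dom((P_{1,0})^j)$ kills all boundary terms, giving $\langle\psi_n,(P_{1,0})^jK\rangle=n^j\langle\psi_n,K\rangle$. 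Hence, for $K\in\dom((P_{1,0})^j)$, the sequence $\bigl(|n|^{2j}|\langle\psi_n,K\rangle|^2\bigr)_{n}$ is summable and therefore tends to $0$, so $|\langle\psi_n,K\rangle|=o(|n|^{-j})$. Combining this with the first paragraph gives $r_n=o(|n|^{-j})$ as $n\to\pm\infty$, as claimed.

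This is a short consequence of Theorem~\ref{thm:3.11a}, and I do not expect a genuine obstacle. The only point deserving a word of care is the identification of $\dom((P_{1,0})^j)$ with the weighted-$\ell^2$ condition on Fourier coefficients (or, equivalently, the vanishing of the boundary terms in the repeated integration by parts), which is the standard characterization and poses no difficulty.
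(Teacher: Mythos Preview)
Your proposal is correct and follows essentially the same approach as the paper: both read off the radii $r_n=|\langle\psi_n,K\rangle|$ from Theorem~\ref{thm:3.11a} and then use selfadjointness of $P_{1,0}$ (equivalently, integration by parts) to obtain $n^j\langle\psi_n,K\rangle=\langle\psi_n,(P_{1,0})^jK\rangle$, from which the $\ell^2$-summability of the right-hand side gives $|\langle\psi_n,K\rangle|=o(|n|^{-j})$. Your phrasing via the spectral characterization of $\dom((P_{1,0})^j)$ is a cosmetic variant of the paper's computation.
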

	\begin{proof}
		Since $K\in\dom(P_{1,0})$, we get for $n\neq 0$:
		\begin{align}
			(a(K))_n&=\langle \psi_{-n},K\rangle=\frac{(-1)^j}{n^j}\langle (P_{1,0})^j\psi_{-n},K\rangle\\=&\frac{(-i)^j}{n^j}\langle \psi_{-n}, K^{(j)}\rangle =\frac{1}{(in)^j} (a(K^{(j)}))_n
		\end{align}
		and consequently, we have
		\begin{equation}
			|(a(K))_n|\leq \frac{1}{1+|n|^j}|(a(K^{(j)}))_n|
		\end{equation}
		for every $n\in\mathbb{Z}$. Since $(a(K^{(j)}))_n\in\ell^2(\mathbb{Z})$, we have $\lim_{n\rightarrow\pm \infty}|a(K^{(j)})_n|=0$ and the corollary follows by an application of Theorem \ref{thm:3.11a}.
	\end{proof}
	\begin{theorem}
		Let $K\in L^2(0,2\pi)$ be fixed. Then, there exist $N\in\mathbb{N}$ and $b_0>0$ such that for all $n\in\mathbb{N}$ with $n\geq N$ and all $b\geq b_0$, the function $\Phi(\lambda)$ has precisely $2n+1$ zeros (counting multiplicity) inside the rectangle
		\begin{equation}
			\mathcal{R}_{n,b}:=\{z\in\mathbb{C}\:|\: -(n+1/2)\leq \Re(z) \leq n+1/2,\: -b\leq \Im z\leq b\}\:. 
		\end{equation}
		\label{thm:3.13}
	\end{theorem}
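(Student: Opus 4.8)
The plan is to argue by Rouch\'e's theorem on the boundary of $\mathcal{R}_{n,b}$, comparing $\Phi$ with the explicit entire function $f(\lambda):=1-e^{2\pi i\lambda}$ from the decomposition $\Phi=f+g$, $g(\lambda):=i\int_0^{2\pi}e^{i\lambda t}K(t)\,dt$, already used in the proof of Theorem~\ref{thm:3.11a}. The zeros of $f$ are exactly the integers, all simple, and those lying in $\mathcal{R}_{n,b}$ are $-n,-n+1,\dots,n$; hence $f$ has precisely $2n+1$ zeros in the interior of $\mathcal{R}_{n,b}$ and none on $\partial\mathcal{R}_{n,b}$, since the two vertical sides have real part the half-integer $\pm(n+\tfrac12)$ and the two horizontal sides have imaginary part $\pm b\neq0$. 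It therefore suffices to produce $N=N(K)\in\mathbb{N}$ and $b_0=b_0(K)>0$ such that $|g(\lambda)|<|f(\lambda)|$ for every $\lambda\in\partial\mathcal{R}_{n,b}$ whenever $n\geq N$ and $b\geq b_0$; Rouch\'e's theorem then gives that $\Phi=f+g$ has $2n+1$ zeros in $\mathcal{R}_{n,b}$, counted with multiplicity.

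On the two vertical sides I would reuse the identity \eqref{eq:3.52}, whose derivation there uses only that $\ell+m+\eta\neq0$ for all $m$. Writing a point of such a side as $\lambda=\ell+\eta$ with $\ell\in\{n,-(n+1)\}$ and $\eta=\tfrac12+iy$, $y\in[-b,b]$, the Fourier expansion of $K$ in the basis $\{\psi_m\}_{m\in\mathbb{Z}}$ together with termwise integration gives
\begin{equation*}
	|g(\lambda)|=|f(\lambda)|\,\frac{1}{\sqrt{2\pi}}\,\bigl|(\mathcal{T}_{1/2+iy}\,a)_\ell\bigr|,\qquad a:=a(K)\in\ell^2(\mathbb{Z}),
\end{equation*}
which is legitimate because $\ell+m+\tfrac12$ is a nonzero half-integer for all $m\in\mathbb{Z}$ and $f(\lambda)=1-e^{2\pi i\eta}\neq0$ on this line. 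So I only need $\sup_{y\in\mathbb{R}}\bigl|(\mathcal{T}_{1/2+iy}a)_\ell\bigr|\to0$ as $|\ell|\to\infty$. I would split $\sum_{m}a_m/(m+\ell+\tfrac12+iy)$ into the two resonant terms $m+\ell\in\{-1,0\}$, whose denominators have modulus $\geq\tfrac12$ and which therefore contribute at most $2(|a_{-\ell}|+|a_{-\ell-1}|)$, and the remaining terms, for which $|m+\ell+\tfrac12+iy|\geq|m+\ell+\tfrac12|\geq\tfrac12|m+\ell|$ and which therefore contribute at most $2\sum_{j\neq0,-1}|a_{j-\ell}|/|j|$. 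The first part tends to $0$ since $a\in\ell^2(\mathbb{Z})$, and the second is made $<\varepsilon$ uniformly in $y$ by first choosing a truncation level $J$ for which the tail $\sum_{|j|>J}|a_{j-\ell}|/|j|$ is small (by Cauchy--Schwarz, uniformly in $\ell$ and $y$) and then taking $|\ell|$ large, the remaining sum over $0<|j|\leq J$ being a fixed finite sum of terms each tending to $0$. This fixes $N=N(K)$, with no dependence on $b$.

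On the two horizontal sides the crude bound suffices. For $\lambda=x+ib$ one has $|f(\lambda)|\geq1-e^{-2\pi b}$ and $|g(\lambda)|\leq\int_0^{2\pi}e^{-bt}|K(t)|\,dt$, while for $\lambda=x-ib$, after factoring $e^{bt}=e^{2\pi b}e^{b(t-2\pi)}$, one has $|f(\lambda)|\geq e^{2\pi b}-1$ and $|g(\lambda)|\leq e^{2\pi b}\int_0^{2\pi}e^{b(t-2\pi)}|K(t)|\,dt$. In either case the displayed integral is nonincreasing in $b$ and tends to $0$ as $b\to\infty$ by dominated convergence (the integrand is dominated by $|K|\in L^1(0,2\pi)$), so there is $b_0=b_0(K)$, independent of $n$ and of $x$, with $|g(\lambda)|<|f(\lambda)|$ on both horizontal sides for all $b\geq b_0$. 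Combined with the previous paragraph, $|g|<|f|$ on all of $\partial\mathcal{R}_{n,b}$ once $n\geq N$ and $b\geq b_0$, and Rouch\'e's theorem completes the proof.

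The step I expect to be the main obstacle is the vertical-side estimate. The naive bound $|g(\lambda)|\leq\int_0^{2\pi}e^{-t\,\Im\lambda}|K(t)|\,dt$ is of size $\asymp e^{2\pi|\Im\lambda|}\|K\|_{L^1}$, which cannot be beaten against $|f(\lambda)|\sim e^{2\pi|\Im\lambda|}$ unless $\|K\|_{L^1}<1$; one genuinely has to exploit the cancellation in the oscillatory integral $g$, and to do so uniformly along the whole unbounded line $\Re\lambda=\pm(n+\tfrac12)$. This is precisely what the reformulation through the doubly infinite Hilbert matrices $\mathcal{T}_{1/2+iy}$ and the resonant/nonresonant splitting accomplish, and it is also what dictates the order of the quantifiers ($N$ chosen without reference to $b$, and $b_0$ uniformly in $n$). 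The count of zeros of $f$ and the horizontal-side bounds are routine.
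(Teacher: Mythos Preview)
Your proof is correct and follows essentially the same approach as the paper: Rouch\'e's theorem applied to the decomposition $\Phi=f+g$, with the Hilbert-matrix identity \eqref{eq:3.52} handling the vertical sides and direct estimates on the horizontal sides. Your vertical-side argument is in fact slightly sharper than the paper's---you obtain $\sup_{y\in\mathbb{R}}|(\mathcal{T}_{1/2+iy}a)_\ell|\to0$ via the resonant/nonresonant splitting, whereas the paper fixes $b$ first and carries a $b$-dependent bound---so the independence of $N$ from $b$ that the theorem asserts is transparent in your version.
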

	\begin{proof} Again, we write $\Phi(\lambda)=f(\lambda)+g(\lambda)$, where 
		\begin{equation}
			f(\lambda)= 1-e^{2\pi i \lambda}\quad\mbox{and}\quad g(\lambda)=i\int_0^{2\pi} e^{i\lambda t}K(t)dt
		\end{equation}
		and note that, since the roots of $f(\lambda)$ are given by the integers, $f(\lambda)$ has exactly $2n+1$ zeros in $\mathcal{R}_{n,b}$ for any $n\in\mathbb{N}$ and any $b>0$. The claimed result will follow from Rouch\'e's theorem if we can show $|g(\lambda)|<|f(\lambda)|$ on $\partial \mathcal{R}_{n,b}$ for large enough rectangles $\mathcal{R}_{n,b}$. 
		
		To this end, we begin with the horizontal segments of $\partial\mathcal{R}_{n,b}$, i.e.\ we will show that there is a $b_0>0$ such that $|g(x+iy)|<|f(x+iy)|$  for all $x\in\mathbb{R}$ and all $y\in\mathbb{R}$ with $|y|\geq b_0$. We begin with estimating $|g(x+iy)|$:
		\begin{align}
			|g(x+iy)|=&\left|\int_0^{2\pi}e^{ixt}e^{-yt}K(t)dt\right|\leq \int_0^{2\pi}e^{-yt}|K(t)|dt\leq \|K\|\left(\int_0^{2\pi}e^{-2yt}dt\right)^{1/2}\\=&\|K\|\left(\frac{1}{2y}\left(1-e^{-4\pi y}\right)\right)^{1/2}=\|K\|\frac{|1-e^{-4\pi y}|^{1/2}}{|2y|^{1/2}} \label{eq:3.46}\:,
		\end{align}
		where we used Cauchy-Schwarz for the second inequality. 
		Next, we look at $|f(x+iy)|$:
		\begin{align}
			|f(x+iy)|=&\left|1-e^{2\pi ix }e^{-2\pi y}\right|=\left(1+e^{-4\pi y}-2\cos(2\pi x)e^{-2\pi y}\right)^{1/2}\\\geq & \left(1-2e^{-2\pi y}+e^{-4\pi y}\right)^{1/2}=\left|1-e^{-2\pi y}\right| \label{eq:3.48}\:.
		\end{align}
		Now, clearly there exists a $b_0>0$, where $b_0=b_0(\|K\|)$, such that $\|K\|\frac{|1-e^{-4\pi y}|^{1/2}}{|2y|^{1/2}}<\left|1-e^{-2\pi y}\right|$ for all $|y|\geq b_0$. It then follows from \eqref{eq:3.46} and \eqref{eq:3.48} that $|f(x+ib)|>|g(x+ib)|$ as well as $|f(x-ib)|>|g(x-ib)|$ for all $x\in\mathbb{R}$ and all $b\geq b_0$. 
		
		For what follows, let $b\geq b_0$ be fixed. 
		
		We then need to show $|g(\lambda)|<|f(\lambda)|$ for $\lambda$ on the vertical segments of $\partial\mathcal{R}_{n,b}$, i.e.\ we want to show that there exists $N\in\mathbb{N}$ such that $|f(\pm(n+1/2)+iy)|>|g(\pm(n+1/2)+iy)|$ for all $n\in\mathbb{N}$ with $n\geq N$ and all $y\in\mathbb{R}$ such that $|y|\leq b$. We will only consider the case $\lambda=n+1/2+iy$, the case $\lambda=-n-1/2+iy$ follows analogously. As done in \eqref{eq:3.52}, we get
		\begin{equation}
			|g(n+1/2+iy)|=|f(n+1/2+iy)|\frac{1}{\sqrt{2\pi}}|(\mathcal{T}_{1/2+iy}a(K))_n|\:,
		\end{equation}
		so we need to show that there exists $N\in\mathbb{N}$ such that $\frac{1}{\sqrt{2\pi}}|(\mathcal{T}_{1/2+iy}a(K))_n|<1$ for all $y\in\mathbb{R}$ with $|y|\leq b$ and all $n\geq N$. Similarly as in the proof of Theorem \ref{thm:3.11a}, we further estimate -- again abbreviating $a=a(K)$ --
		\begin{align}
			&|(\mathcal{T}_{1/2+iy}a)_n|\\
			=&\left|\sum_{m\in\mathbb{Z}}\frac{1}{n+m+1/2+iy}a_m\right|\\
			\leq & \left|\sum_{m\in\mathbb{Z}}\frac{-iy}{(n+m+1/2)(n+m+1/2+iy)}a_m\right|+\left|\sum_{m\in\mathbb{Z}}\frac{1}{n+m+1/2}a_m\right|\\
			\leq & \left(\sum_{m\in\mathbb{Z}}\frac{|y|}{|n+m+1/2||n+m+1/2+iy|}|a_m|\right)+|(\mathcal{T}_{1/2}a)_n|\\
			\leq & \left(\sum_{m\in\mathbb{Z}}\frac{b}{(|n+m|-1/2)^2}|a_m|\right)+|(\mathcal{T}_{1/2}a)_n|\overset{n\rightarrow\infty}{\longrightarrow}0\:, \label{eq:3.81}
		\end{align}
		where $(\mathcal{T}_{1/2}a)_n$ goes to zero since $(\mathcal{T}_{1/2}a)\in\ell^2(\mathbb{Z})$ and it was already argued in the proof of Theorem \ref{thm:3.11a}, Equations \eqref{eq:3.61} to \eqref{eq:3.63}, that the first term in \eqref{eq:3.81} goes to zero as $n\rightarrow\infty$. 
		
		This shows that there exists $N\in\mathbb{N}$ such that 
		$|g(\pm(n+1/2)+iy)|< |f(\pm(n+1/2)+iy)|$ for all $|n|\geq N$ and all $y\in\mathbb{R}$ with $|y|\leq b$.
		
		Hence, if $b\geq b_0$ and $|n|\geq N$, we have $|g(\lambda)|<|f(\lambda)|$ on $\partial\mathcal{R}_{n,b}$ and thus, $\mathcal{R}_{n,b}$ contains exactly $(2n+1)$ roots of $\Phi(\lambda)$, counting multiplicities. This shows the theorem.
	\end{proof}
	
	We summarize the findings of this section in the following theorem:
	\begin{theorem}
		Let $K\in L^2(0,2\pi)$ be fixed. Then, there exists $N_0\in\mathbb{N}$ and a $b_0>0$ such that for every $N\geq N_0$ and every $b\geq b_0$
		\begin{itemize}
			\item[(i)] The rectangle $\mathcal{R}_{N,b}$ contains exactly $(2N+1)$ eigenvalues of $P_{1,K}$, counting algebraic multiplicity.
			\item[(ii)] For each $n\in\mathbb{Z}$ with $|n|>N$ and $\langle \psi_n,K\rangle\neq 0$, there is exactly one eigenvalue of $P_{1,K}$ of algebraic multiplicity one in the disk with center point $n$ and radius $|\langle \psi_n,K\rangle|$. 
			\item[(iii)] For each $n\in\mathbb{Z}$ with $|n|>N$ and $\langle \psi_n,K\rangle=0$, the number $n$ is an eigenvalue of $P_{1,K}$ of algebraic multiplicity one.
		\end{itemize}
		Moreover, this describes all eigenvalues of $P_{1,K}$. \label{thm:3.14}
	\end{theorem}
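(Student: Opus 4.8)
The statement is a summary of Theorems~\ref{thm:3.11a} and~\ref{thm:3.13}, so the plan is to assemble those two results (invoking Theorem~\ref{thm:3.6} to read ``order of a zero of $\Phi$'' as ``algebraic multiplicity of an eigenvalue of $P_{1,K}$'') and then to verify that they capture \emph{all} eigenvalues without double counting. First I would fix $N_0\in\mathbb{N}$ and $b_0>0$ so that: (a) $N_0$ exceeds the constant $N(K)$ of Theorem~\ref{thm:3.11a} (so that for every integer $n$ with $|n|>N_0$ the assertions of (ii), (iii) already hold and in particular $|\langle\psi_n,K\rangle|<1/2$); (b) $N_0$ and $b_0$ exceed the constants $N$ and $b_0$ supplied by Theorem~\ref{thm:3.13}; and (c) $b_0\ge 1/2$ (a harmless enlargement). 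Then, for any $N\ge N_0$ and $b\ge b_0$, item (i) is precisely Theorem~\ref{thm:3.13} (with its ``$n$'' taken to be $N$), and items (ii), (iii) for $|n|>N$ follow from Theorem~\ref{thm:3.11a}. Moreover these families do not overlap: a disk centered at an integer $n$ with $|n|>N$ has radius $<1/2$, hence every point of it satisfies $|\Re z|>N+1/2$, so it is disjoint from $\mathcal{R}_{N,b}$; and two such disks about distinct integers are disjoint since each radius is $<1/2$. What remains is the closing claim, that there are no other eigenvalues.

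To prove completeness, let $\lambda_0$ be any zero of $\Phi$. \emph{Step 1: $|\Im\lambda_0|<b_0$.} Suppose $|\Im\lambda_0|\ge b_0$. Choose an integer $M\ge N_0$ with $M+1/2>|\Re\lambda_0|$ and put $b'=|\Im\lambda_0|+1$. By Theorem~\ref{thm:3.13} both $\mathcal{R}_{M,b_0}$ and $\mathcal{R}_{M,b'}$ contain exactly $2M+1$ zeros of $\Phi$ counted with multiplicity, and $\mathcal{R}_{M,b_0}\subseteq\mathcal{R}_{M,b'}$; hence $\mathcal{R}_{M,b'}\setminus\mathcal{R}_{M,b_0}$ contains none. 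If $|\Im\lambda_0|>b_0$ then $\lambda_0\in\mathcal{R}_{M,b'}\setminus\mathcal{R}_{M,b_0}$, a contradiction; if $|\Im\lambda_0|=b_0$ then $\lambda_0\in\partial\mathcal{R}_{M,b_0}$, where $\Phi\ne 0$ because the proof of Theorem~\ref{thm:3.13} gives $|g|<|f|$ there — again a contradiction. \emph{Step 2: if $|\Re\lambda_0|>N+1/2$ then $\lambda_0$ is one of the eigenvalues listed in (ii)--(iii).} By Step 1, $|\Im\lambda_0|<b_0\le b$; choosing an integer $M\ge N_0$ with $M+1/2>|\Re\lambda_0|$, the point $\lambda_0$ lies in the strip $S:=\mathcal{R}_{M,b}\setminus\mathcal{R}_{N,b}$, which by Theorem~\ref{thm:3.13} contains exactly $2(M-N)$ zeros of $\Phi$ with multiplicity. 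But each of the $2(M-N)$ integers $n$ with $N<|n|\le M$ contributes, by (ii) or (iii), exactly one simple zero of $\Phi$ inside a disk of radius $<1/2$ about $n$; these disks are pairwise disjoint and, since $b\ge 1/2$, contained in $S$; hence they exhaust the zeros of $S$, forcing $\lambda_0$ to coincide with one of them. Finally, an eigenvalue not covered by Step 2 has $|\Re\lambda|\le N+1/2$ and, by Step 1, $|\Im\lambda|<b_0\le b$, hence lies in $\mathcal{R}_{N,b}$. So every eigenvalue is accounted for exactly once, as claimed.

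The proof uses no analytic ingredient beyond Theorems~\ref{thm:3.11a} and~\ref{thm:3.13}; the only delicate part -- what I would regard as the main obstacle -- is the bookkeeping of Step 2: keeping the Rouch\'e-type eigenvalue counts on the nested rectangles $\mathcal{R}_{N,b}\subseteq\mathcal{R}_{M,b}$ consistent and checking that the small disks about the integers $n$ with $N<|n|\le M$ sit inside the annular strip $S$ without overlapping one another or the central rectangle.
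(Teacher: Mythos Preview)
Your proof is correct, and it differs from the paper's in one substantive respect. For the closing claim (``this describes all eigenvalues''), the paper invokes Theorem~\ref{thm:3.7} (the Anselone/Boas/Cartwright result) to assert that only finitely many zeros of $\Phi$ lie outside the small disks about large integers, and then simply chooses $N_0',b_0'$ so that $\mathcal{R}_{N_0',b_0'}$ swallows those finitely many exceptional zeros; the final $N_0,b_0$ are the maxima of this pair together with the constants supplied by Theorems~\ref{thm:3.11a} and~\ref{thm:3.13}. You avoid Theorem~\ref{thm:3.7} entirely: your Steps~1 and~2 deduce the completeness claim purely from the rectangle count of Theorem~\ref{thm:3.13} applied to nested rectangles, together with the disk-localized zeros of Theorem~\ref{thm:3.11a}. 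This makes your argument more self-contained (the Anselone/Boas/Cartwright input is no longer needed for this theorem), at the price of the extra bookkeeping you flag. One minor stylistic point: in Step~1 you reach into the \emph{proof} of Theorem~\ref{thm:3.13} (the inequality $|g|<|f|$ on $\partial\mathcal{R}_{M,b_0}$) to rule out zeros with $|\Im\lambda_0|=b_0$; this is fine, but you could sidestep it by noting that the Rouch\'e count already excludes zeros on the boundary, or by nudging $b_0$ slightly using the discreteness of $\sigma(P_{1,K})$.
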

	\begin{proof}
		
		By Theorem \ref{thm:3.7}, there are only finitely roots of $\Phi$ (counting multiplicity) which are not inside disks around integers of large modulus with small radii. Choose $N_0'\in\mathbb{N}$ and $b_0'>0$ large enough such that $\mathcal{R}_{N_0',b_0'}$ contains all of these roots. Moreover, by Theorem \ref{thm:3.13}, there exist $N_0''\in\mathbb{N}$ and $b_0''>0$ such that for every $N\geq N_0''$ and every $b\geq b_0''$, the function $\Phi$ has exactly $(2N+1)$ roots (counting multiplicities) inside $\mathcal{R}_{N,b}$. Lastly, by Theorem \ref{thm:3.11a}, we can choose $N_0'''\in\mathbb{N}$ such that for every $n\in\mathbb{Z}$ with $|n|> N_0'''$ and $\langle \psi_n,K\rangle\neq 0$, there is exactly one simple root of $\Phi$ inside the disk with center point $n$ and radius $|\langle \psi_n,K\rangle|$ and if $\langle \psi_n,K\rangle=0$, then $n$ is a simple root of $\Phi$. 
		
		Since by Theorem \ref{thm:3.6}, the number of roots of $\Phi(\lambda)$ (counting multiplicity) inside a domain corresponds to the numbers of eigenvalues of $P_{1,K}$ inside the same domain (counting algebraic multiplicities), the result then follows when taking $N_0:=\max\{N_0', N_0'', N_0'''\}$ and $b_0=\max\{b_0', b_0''\}$.
	\end{proof}
	
	Figure \ref{fig:1} is an illustration of this result for the specific choice $\widetilde{K}(t)=\frac{1-i}{2}(t-\pi)$. The eigenvalues of $P_{1,\widetilde{K}}$ are plotted in red. It shows the rectangle $\mathcal{R}_{3,1.4}$, which contains the seven eigenvalues of $P_{1,\widetilde{K}}$ that lie closest to zero in the complex plane. The circles around $-6,-5,-4$, and $4,5,6$ have radii equal to the absolute values of the corresponding Fourier coefficients and each contains exactly one eigenvalue of $P_{1,\widetilde{K}}$. For this particular example note that the algebraic multiplicity of each eigenvalue in $\sigma(P_{1,\widetilde{K}})$ is equal to one. Lastly, observe that since $\widetilde{K}\perp\psi_0$, the number $0$ is an eigenvalue of $P_{1,\tilde{K}}$. 
	
	\begin{figure}[htbp]
		\centering
		\includegraphics[clip, trim=2.5cm 10cm 2.5cm 10cm, width=1.00\textwidth]{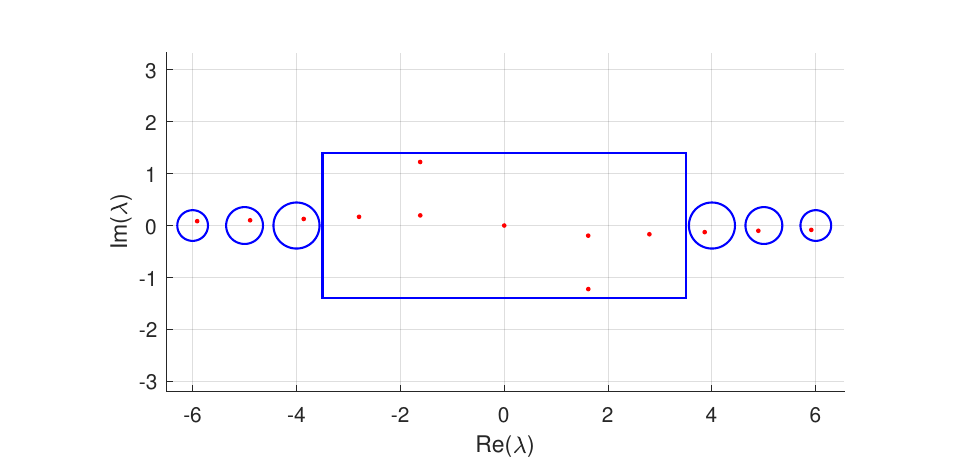}
		\caption{Eigenvalues of $P_{1,\widetilde{K}}$ for $\widetilde{K}(t)=\frac{1-i}{2}(t-\pi)$. The recatngle and circles illustrate the result stated in Theorem \ref{thm:3.14}.}
		\label{fig:1}
	\end{figure}

	\subsection{Riesz basis property of root functions}

	From \eqref{eq:3.6}, note that the adjoint $P_{1,K}^*$ is given by
	\begin{equation}
		P_{1,K}^*:\quad\dom(P_{1,K}^*)=\{f\in H^1(0,2\pi)\:|\: f(2\pi)=f(0)+\langle iK,f\rangle\},\quad f\mapsto if'\:.
	\end{equation}

	Next, we will show that the root vectors of $P_{1,K}$ and $P_{1,K}^*$ both form Riesz bases. We will need the following definitions and previous results:
	
	\begin{definition}{\cite[p.~316]{GK69}} Two sequences of vectors $\{f_n\}_{n\in\mathbb{Z}}, \{g_n\}_{n\in\mathbb{Z}}$ in a Hilbert space $\mathcal{H}$ are said to be \emph{quadratically close} if
		\begin{equation}
			\sum_{n\in\mathbb{Z}}\|f_n-g_n\|^2<\infty\:.
		\end{equation}
	\end{definition}
	\begin{definition}{\cite[p.\ 316]{GK69}} A sequence of vectors $\{g_n\}_{n\in\mathbb{Z}}$ is said to be $\omega$-\emph{linearly independent} if the equality 
		\begin{equation}
			\sum_{n\in\mathbb{Z}}c_ng_n=0
		\end{equation}
		is not possible for 
		\begin{equation}
			0<\sum_{n\in\mathbb{Z}}|c_n|^2\|g_n\|^2<\infty\:.
		\end{equation}
	\end{definition}
	\begin{proposition}{\cite{B51}; presentation given in \cite[Theorem VI.2.3]{GK69}} Any $\omega$-linearly independent sequence $\{g_n\}_{n\in\mathbb{Z}}$ which is quadratically close to some Riesz basis $\{f_n\}_{n\in\mathbb{Z}}$, is also a Riesz basis. \label{prop:3.18}
	\end{proposition}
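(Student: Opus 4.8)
The statement is Bari's classical theorem, and the plan is to realize $\{g_n\}_{n\in\mathbb{Z}}$ as the image of an orthonormal basis under a bounded, boundedly invertible operator. First I would normalize the comparison basis. Writing the given Riesz basis as $f_n=S\psi_n$ with $S$ bounded and boundedly invertible and $\{\psi_n\}_{n\in\mathbb{Z}}$ an orthonormal basis, and replacing $g_n$ by $S^{-1}g_n$, one reduces to the case in which $\{g_n\}$ is $\omega$-linearly independent and quadratically close to an orthonormal basis $\{e_n\}_{n\in\mathbb{Z}}$: indeed the norms $\|S^{-1}g_n\|$ and $\|g_n\|$ are comparable, $\sum_n\|S^{-1}g_n-e_n\|^2\le\|S^{-1}\|^2\sum_n\|g_n-f_n\|^2<\infty$, and $\{g_n\}$ is a Riesz basis if and only if $\{S^{-1}g_n\}$ is.

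Next I would write $g_n=e_n+h_n$ with $\sum_n\|h_n\|^2<\infty$ and let $K$ be the linear operator determined by $Ke_n:=h_n$. Since $\sum_n\|Ke_n\|^2=\sum_n\|h_n\|^2<\infty$, the operator $K$ is Hilbert--Schmidt, hence bounded and compact, and $T:=I+K$ is bounded with $Te_n=g_n$ for all $n$; as a compact perturbation of the identity, $T$ is Fredholm of index zero. Note also that for $(c_n)_n\in\ell^2(\mathbb{Z})$ the series $\sum_n c_n h_n$ converges absolutely by Cauchy--Schwarz, so $\sum_n c_n g_n$ converges and equals $T\big(\sum_n c_n e_n\big)$; in particular $\|g_n\|\to\|e_n\|=1$, so there are constants $0<c\le C<\infty$ with $c\le\|g_n\|\le C$ for all but finitely many $n$, and each $g_n\neq 0$.

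The crux is to show $T$ is injective, and this is exactly where $\omega$-linear independence enters. If $Tx=0$, set $c_n:=\langle e_n,x\rangle$, so that $(c_n)_n\in\ell^2(\mathbb{Z})$ and $\sum_n c_n g_n=Tx=0$. From the bounds on $\|g_n\|$ one gets $\sum_n|c_n|^2\|g_n\|^2<\infty$, and if $x\neq 0$ this sum is also strictly positive; then the relation $\sum_n c_n g_n=0$ contradicts $\omega$-linear independence, whence $\ker T=\{0\}$. An injective Fredholm operator of index zero is onto, so by the bounded inverse theorem $T$ is boundedly invertible; since $g_n=Te_n$, the system $\{g_n\}_{n\in\mathbb{Z}}$ is the image of an orthonormal basis under a bounded, boundedly invertible operator, i.e.\ a Riesz basis. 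I expect the injectivity step to be the main obstacle: one has to make sure that the coefficient sequence coming from a hypothetical kernel vector lands in the regime $0<\sum_n|c_n|^2\|g_n\|^2<\infty$ in which the hypothesis has content --- which is precisely what the two-sided control of $\|g_n\|$ (together with $g_n\neq 0$) provides --- everything else being soft Fredholm theory.
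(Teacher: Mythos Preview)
The paper does not prove this proposition; it is quoted as a classical result of Bari (with the reference to the presentation in \cite[Theorem VI.2.3]{GK69}) and used as a black box. Your argument is precisely the standard proof one finds there: reduce to an orthonormal comparison basis, observe that $K:e_n\mapsto g_n-e_n$ is Hilbert--Schmidt so that $T=I+K$ is Fredholm of index zero, and use $\omega$-linear independence to show $\ker T=\{0\}$. This is correct.

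One small point: the assertion ``each $g_n\neq 0$'' does not follow from $\|g_n\|\to 1$ alone, and with the paper's literal definition of $\omega$-linear independence (which only forbids relations with $0<\sum|c_n|^2\|g_n\|^2<\infty$) a vanishing $g_{n_0}$ would produce $e_{n_0}\in\ker T$ without contradicting the hypothesis. In the standard formulation of Bari's theorem (and in every application in this paper, where the $g_n$ are root vectors) one tacitly has $g_n\neq 0$, so this is a harmless convention rather than a genuine gap; but if you want the argument to be self-contained under the definition as written, simply add the standing assumption $g_n\neq 0$.
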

	The next result follows from an argument given in {\cite[\S 4, Chapter VI, Result 1 on page 329]{GK69}}. 
	\begin{lemma}{\cite[p.\ 316]{GK69}}  Let $\{f_n\}_{n\in\mathbb{Z}}$ be a system made up of bases of all the root subspaces of either $P_{1,K}$ or $P^*_{1,K}$. Then $\{f_n\}_{n\in\mathbb{Z}}$ is $\omega$-linearly independent. \label{lemma:3.19}
	\end{lemma}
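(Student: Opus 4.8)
The plan is to follow the argument of \cite[\S4, Chapter VI]{GK69}: I would reduce the $\omega$-linear independence of the infinite system $\{f_n\}_{n\in\mathbb{Z}}$ to the (automatic) linear independence of the finitely many $f_n$'s lying in any single root subspace, by testing a putative relation $\sum_n c_nf_n=0$ against the Riesz spectral projections of the operator.

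Concretely, write $T$ for whichever of $P_{1,K}$, $P_{1,K}^*$ is under consideration. First I would record the structural input: by Theorem \ref{thm:3.5} (for $P_{1,K}$; for $P_{1,K}^*$ one uses that its spectrum is $\overline{\sigma(P_{1,K})}$, or simply that $T$ has Hilbert--Schmidt resolvent) the spectrum $\sigma(T)=\{\lambda_j\}$ is discrete with every eigenvalue isolated, and by the Corollary following Theorem \ref{thm:3.6} each root subspace $\mathcal{R}(\lambda_j,T)$ is finite-dimensional. Since $\{f_n\}_{n\in\mathbb{Z}}$ is a union of bases of these root subspaces, there is a partition $\mathbb{Z}=\bigsqcup_j I_j$ into finite index sets with $\{f_n:n\in I_j\}$ a basis of $\mathcal{R}(\lambda_j,T)$. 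Next I would introduce, for each $j$, the Riesz projection $P_j=\frac{1}{2\pi i}\oint_{\gamma_j}(z-T)^{-1}\,dz$, where $\gamma_j$ is a small positively oriented circle around $\lambda_j$ meeting no other point of $\sigma(T)$; from the standard functional calculus for operators with compact resolvent, $P_j$ is bounded, $\ran(P_j)=\mathcal{R}(\lambda_j,T)$, and $P_jP_k=\delta_{jk}P_j$, so that $P_jf_n=f_n$ for $n\in I_j$ while $P_jf_n=0$ for $n\notin I_j$ (in the latter case $f_n\in\ran(P_k)\subseteq\ker(P_j)$ for some $k\neq j$).

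The conclusion is then immediate: assuming $\sum_{n\in\mathbb{Z}}c_nf_n=0$ is a convergent series with $0<\sum_{n}|c_n|^2\|f_n\|^2<\infty$, I would fix $j$, apply the bounded operator $P_j$ termwise to the partial sums (legitimate by continuity, and harmless since $P_j$ annihilates all but the finitely many terms indexed by $I_j$), and obtain $\sum_{n\in I_j}c_nf_n=0$; linear independence of the finite set $\{f_n:n\in I_j\}$ forces $c_n=0$ for $n\in I_j$, and letting $j$ vary gives $c_n\equiv0$, contradicting $\sum|c_n|^2\|f_n\|^2>0$. The only points that need care are the standard-but-nontrivial facts about isolated eigenvalues and their Riesz projections ($P_j$ bounded, $P_jP_k=\delta_{jk}P_j$, and $\mathcal{R}(\lambda_k,T)\subseteq\ker(P_j)$ for $k\neq j$) together with the legitimacy of passing $P_j$ through the infinite sum; I do not anticipate a genuine obstacle here, since all of these are classical for operators with compact (indeed Hilbert--Schmidt) resolvent, which $T$ is.
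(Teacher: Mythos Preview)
Your proposal is correct and follows essentially the same Riesz-projection argument as the paper: apply the bounded spectral projection associated with a fixed eigenvalue to a putative relation $\sum_n c_n f_n=0$, use that it annihilates all root vectors coming from other eigenvalues (via closedness of its kernel, equivalently continuity), and conclude that the finitely many surviving coefficients vanish by linear independence. The only cosmetic difference is that the paper first passes to the compact resolvent $(P_{1,K}-\mu)^{-1}$ before introducing the Riesz projections, whereas you define them directly for the unbounded operator $T$; the resulting projections coincide, so the arguments are the same.
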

	\begin{proof}
		We will only argue for the operator $P_{1,K}$, the result for $P_{1,K}^*$ follows analogously. 
		
		To begin with, let $\mu\in\varrho(P_{1,K})$. Firstly, observe that by Corollary \ref{coro:3.3}, the resolvent $(P_{1,K}-\mu)^{-1}$ is compact and therefore $\sigma((P_{1,K}-\mu)^{-1})$ is given by
		\begin{equation}
			\sigma((P_{1,K}-\mu)^{-1})=\left\{\frac{1}{\lambda-\mu}\:\bigg|\: \lambda\in\sigma(P_{1,K})\right\}\cup\{0\}
		\end{equation}
		and for every $\lambda\in\sigma(P_{1,K})$, we have
		\begin{equation}
			\mathcal{R}(\lambda,P_{1,K})=\mathcal{R}\left(\frac{1}{\lambda-\mu}, (P_{1,K}-\mu)^{-1}\right)\:.
		\end{equation}
		Therefore, if $\{f_n\}_{n\in\mathbb{Z}}$ is a system made up of bases of all the root subspaces of $P_{1,K}$, it is also the system made up of bases of all the root subspaces of $(P_{1,K}-\mu)^{-1}$ corresponding to all non-zero eigenvalues.
		
		Noting that by Corollary \ref{coro:3.3}, the resolvent $(P_{1,K}-\mu)^{-1}$ is compact, for every $z_0\in\sigma((P_{1,K}-\mu)^{-1})\setminus\{0\}$, there is a $\delta>0$ such that there are no other eigenvalues of $(P_{1,K}-\mu)^{-1}$ inside the disk $|z-z_0|=\delta$. Then, the operator
		\begin{equation}
			\mathscr{P}(z_0)=-\frac{1}{2\pi i}\ointctrclockwise_{|z-z_0|=\delta} \left[(P_{1,K}-\mu)^{-1}-z\right]^{-1}dz\:,
		\end{equation}
		where the integration contour is in counter-clockwise direction, is a (not necessarily orthogonal) projector (Riesz projection) onto $\mathcal{R}(z_0,(P_{1,K}-\mu)^{-1})$, such that
		\begin{equation}
			\ran\left(\mathscr{P}(z_0)\right)=\mathcal{R}(z_0,(P_{1,K}-\mu)^{-1})
		\end{equation}
		and
		\begin{equation}
			\mathscr{P}(z_0)\mathcal{R}(z_1,(P_{1,K}-\mu)^{-1})=0\quad\mbox{for every}\quad z_1\in\sigma((P_{1,K}-\mu)^{-1})\setminus\{0,z_0\}\:,
		\end{equation}
		cf.\ Section I.1 and I.2 in \cite{GK69}. Moreover, $\mathcal{H}$ can be decomposed into the direct sum
		\begin{equation}
			\mathcal{H}=\mathcal{R}(z_0,(P_{1,K}-\mu)^{-1})\dot{+}\mathfrak{N}_{z_0}\:,
		\end{equation}
		where both $\mathcal{R}(z_0,(P_{1,K}-\mu)^{-1})$ and $\mathfrak{N}_{z_0}=\ran(I-\mathscr{P}(z_0))$ are closed subspaces of $\mathcal{H}$ that are invariant under $(P_{1,K}-\mu)^{-1}$.
		
		Now assume that
		\begin{equation}
			\sum_{n\in\mathbb{Z}}c_nf_n=0\quad\mbox{and}\quad 0<\sum_{n\in\mathbb{Z}}|c_n|^2\|f_n\|^2<\infty\:, \label{eq:3.117}
		\end{equation}
		where for each $n\in \mathbb{Z}$, there exists $z_n\in\sigma((P_{1,K}-\mu)^{-1})\setminus\{0\}$ such that $f_n\in\mathcal{R}(z_n,(P_{1,K}-\mu)^{-1})$. Since each root space is finite-dimensional, we may assume without loss of generality that $z_{m_1}\neq z_{m_2}$ for $m_1\neq m_2$. Then, applying $\mathscr{P}(z_m)$ yields
		\begin{equation}
			0=\mathscr{P}(z_m)\sum_{n\in\mathbb{Z}}c_nf_n=c_m\mathscr{P}(z_m)f_m+\mathscr{P}(z_m)\left(\sum_{n\neq m}c_nf_n\right)=c_mf_m\:,
		\end{equation}
		where we note that $\sum_{n\neq m}c_mf_m\in\mathfrak{N}_{z_m}$, since for every $n\neq m$, we have $f_n\in\mathcal{R}(z_n,(P_{1,K}-\mu)^{-1})\subseteq \mathfrak{N}_{z_m}$ and $\mathfrak{N}_{z_n}$ is a closed subspace of $\mathcal{H}$. Consequently, we have $c_m=0$, and since we can do this for any $m\in\mathbb{Z}$, this implies $c_m=0$ for all $m\in\mathbb{Z}$, which is a contradiction to the right-hand side of \eqref{eq:3.117}. This shows the lemma.
		
	\end{proof}
	
	We are now prepared to show our main result:
	\begin{theorem}
		For any $K\in L^2(0,2\pi)$, the root vectors of $P_{1,K}$ and of $P_{1,K}^*$ form Riesz bases of $L^2(0,2\pi)$.
		\label{thm:3.20}
	\end{theorem}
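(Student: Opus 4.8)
The plan is to use the criterion from Proposition~\ref{prop:3.18}: a sequence that is $\omega$-linearly independent and quadratically close to a Riesz basis (in fact to an orthonormal basis) is itself a Riesz basis. By Lemma~\ref{lemma:3.19} the $\omega$-linear independence is already in hand for any system built from bases of the root subspaces of $P_{1,K}$ (and likewise for $P_{1,K}^*$), so the entire burden is to produce such a system that is quadratically close to the orthonormal Fourier basis $\{\psi_n\}_{n\in\mathbb{Z}}$ of \eqref{eq:3.36}. I will treat $P_{1,K}$ first and then note that $P_{1,K}^*$ is of exactly the same form (with $iK$ replaced, up to conjugation, by a boundary-condition vector), so the argument transfers verbatim.

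First I would invoke Theorem~\ref{thm:3.14}: fix $N_0$ and $b_0$ as there. For $|n|>N_0$ there is a single eigenvalue $\lambda_n$ of algebraic multiplicity one in the disk of center $n$ and radius $r_n:=|\langle\psi_n,K\rangle|$, and by Theorem~\ref{thm:3.1} the associated eigenfunction is (a normalization of) $\phi_{\lambda_n}$ from \eqref{eq:3.10}, which for $P_{1,K}$ is simply $\tfrac{1}{\sqrt{2\pi}}e^{-i\lambda_n x}$ up to the constant factor $\big(i\int_0^x I_{V,\lambda}(t)k(t)\,dt+\rho\big)$ — wait, for $P_{1,K}$ itself ($V\equiv 0$, $\rho$ absorbed) the eigenfunction is proportional to $e^{-i\lambda_n x}$ only when $K\equiv 0$; in general it is $\tfrac{1}{\sqrt{2\pi}}e^{-i\lambda_n x}\big(i\int_0^x e^{i\lambda_n t}K(t)\,dt+1\big)$. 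I would normalize each eigenfunction to unit norm and call it $f_n$. For the finitely many $|n|\le N_0$ I would let $f_n$ run through an arbitrary basis of each root subspace contained in $\mathcal R_{N_0,b_0}$, re-indexed by the integers so that the total count matches (Theorem~\ref{thm:3.14}(i) guarantees exactly $2N_0+1$ of them). The system $\{f_n\}_{n\in\mathbb Z}$ is then a complete system of root vectors of $P_{1,K}$, and by Lemma~\ref{lemma:3.19} it is $\omega$-linearly independent.

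The core estimate is then $\sum_{n\in\mathbb Z}\|f_n-\psi_n\|^2<\infty$, where only the tail $|n|>N_0$ matters (the finite part contributes a finite sum regardless of what $f_n$ is there). For $|n|>N_0$ I would write, before normalization, $g_n(x):=\tfrac{1}{\sqrt{2\pi}}e^{-i\lambda_n x}\big(1+i\int_0^x e^{i\lambda_n t}K(t)\,dt\big)$ and split $\|f_n-\psi_n\|\le \|f_n-\widehat g_n\|+\|\widehat g_n-\psi_n\|$ where $\widehat g_n=g_n/\|g_n\|$. Two things go to zero quadratically summably: (a) the phase shift, $\|\tfrac{1}{\sqrt{2\pi}}e^{-i\lambda_n x}-\psi_n\|=\|\tfrac{1}{\sqrt{2\pi}}(e^{-i(\lambda_n-n)x}-1)\|\le \tfrac{1}{\sqrt{2\pi}}\big(\int_0^{2\pi}|\lambda_n-n|^2 x^2\,dx\big)^{1/2}\lesssim |\lambda_n-n|\le r_n=|\langle\psi_n,K\rangle|$, and $\{\langle\psi_n,K\rangle\}\in\ell^2$ since $K\in L^2$; (b) the integral correction, $\big\|\tfrac{1}{\sqrt{2\pi}}e^{-i\lambda_n x}\cdot i\int_0^x e^{i\lambda_n t}K(t)\,dt\big\|$, which I would bound by controlling $\sup_{x\in[0,2\pi]}\big|\int_0^x e^{i\lambda_n t}K(t)\,dt\big|$. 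Since $|\Im\lambda_n|$ is bounded (Theorem~\ref{thm:3.14}(i) puts all $\lambda_n$ in a horizontal strip, using also Theorem~\ref{thm:3.13}), $|e^{i\lambda_n t}|$ is bounded by a constant on $[0,2\pi]$, and $\int_0^x e^{i\lambda_n t}K(t)\,dt = \int_0^x e^{i(\lambda_n-n)t}\big(e^{int}K(t)\big)\,dt$; expanding $e^{i(\lambda_n-n)t}=1+O(|\lambda_n-n|)$ and recognizing $\int_0^{2\pi}e^{int}K(t)\,dt=\sqrt{2\pi}\,\overline{\langle\psi_n,K\rangle}$ (up to conjugation) shows this quantity is $O(|\langle\psi_n,K\rangle|)+O(|\lambda_n-n|\,\|K\|)=O(r_n)$, hence square-summable; one also checks $\|g_n\|=1+o(1)$ so passing to $\widehat g_n$ costs only another square-summable error, and the boundary-condition/eigenfunction-matching error $\|f_n-\widehat g_n\|$ vanishes since $f_n$ and $\widehat g_n$ are the same vector up to a unimodular constant which can be chosen to match $\psi_n$'s phase.

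With $\{f_n\}$ now shown to be both $\omega$-linearly independent (Lemma~\ref{lemma:3.19}) and quadratically close to the orthonormal basis $\{\psi_n\}$, Proposition~\ref{prop:3.18} yields that $\{f_n\}$ is a Riesz basis of $L^2(0,2\pi)$. Finally, $P_{1,K}^*$ has domain $\{f\in H^1\mid f(2\pi)=f(0)+\langle iK,f\rangle\}$ and action $f\mapsto if'$; running the same resolvent computation as in Theorem~\ref{thm:3.1} (or observing directly that its spectrum is $\overline{\sigma(P_{1,K})}$ with root spaces spanned by $\{x^j e^{-i\overline{\lambda}x}\}$ as already used in the proof of Theorem~\ref{thm:3.6}) shows the eigenfunctions are again $\psi_n$-like up to a phase shift of size $O(r_n)$, so the identical quadratic-closeness estimate applies. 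I expect the main obstacle to be the bookkeeping in step (b): carefully extracting that $\sup_x|\int_0^x e^{i\lambda_n t}K(t)\,dt|$ is genuinely $O(|\langle\psi_n,K\rangle|)$ uniformly in $x$ — rather than merely $o(1)$ — which is what makes the series converge, and ensuring the normalization constants $\|g_n\|$ stay bounded away from $0$ so that dividing by them is harmless.
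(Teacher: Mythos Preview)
Your overall strategy coincides with the paper's: invoke Bari's criterion (Proposition~\ref{prop:3.18}) together with the $\omega$-linear independence from Lemma~\ref{lemma:3.19}, reduce to the tail $|n|>N_0$ via Theorem~\ref{thm:3.14}, and split the error into a phase-shift part and an integral-correction part. Part (a) is fine and matches the paper. Your treatment of $P_{1,K}^*$ at the end is also correct and in fact simpler than the $P_{1,K}$ case, since its eigenfunctions are exactly $\tfrac{1}{\sqrt{2\pi}}e^{-i\mu_n x}$.

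The gap is in step (b). You claim that
\[
\sup_{x\in[0,2\pi]}\Big|\int_0^x e^{int}K(t)\,dt\Big|=O\big(|\langle\psi_n,K\rangle|\big)
\]
by ``recognizing $\int_0^{2\pi}e^{int}K(t)\,dt=\sqrt{2\pi}\,\overline{\langle\psi_n,K\rangle}$''. But the integral you need runs from $0$ to $x$, not to $2\pi$, and the partial integral is \emph{not} controlled by the full Fourier coefficient. Take $K\equiv 1$: then $\langle\psi_n,K\rangle=0$ for all $n\neq 0$, yet $\int_0^x e^{int}\,dt=(e^{inx}-1)/(in)$ has supremum of order $1/|n|$, not $0$. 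So the pointwise bound you propose is simply false, and with it the claimed $O(r_n)$ estimate for term (b). You correctly flagged this as the main obstacle, but the resolution you sketched does not work.

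What does work---and this is what the paper does---is to abandon the supremum entirely. You only need the $L^2(0,2\pi)$-norm in $x$ of $e^{-i\lambda_n x}\int_0^x e^{i\lambda_n t}K(t)\,dt$, and since $|e^{-i\lambda_n x}|$ is uniformly bounded, it suffices to control $\int_0^{2\pi}\big|\int_0^x e^{int}K(t)\,dt\big|^2\,dx$ after handling the $e^{i(\lambda_n-n)t}-1$ difference as you already did. Now sum over $n$ \emph{before} estimating, interchange sum and integral by Fubini, and apply Parseval for each fixed $x$:
\[
\sum_{n\in\mathbb{Z}}\int_0^{2\pi}\Big|\int_0^x e^{int}K(t)\,dt\Big|^2 dx
=2\pi\int_0^{2\pi}\sum_{n\in\mathbb{Z}}\big|\langle\psi_n,\chi_{[0,x]}K\rangle\big|^2\,dx
=2\pi\int_0^{2\pi}\|\chi_{[0,x]}K\|^2\,dx\le 4\pi^2\|K\|^2.
\]
This gives the quadratic-closeness without any term-by-term control. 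A minor additional point: the paper does not normalize the eigenfunctions to unit norm but uses the explicit representative $\tfrac{1}{\sqrt{2\pi}}e^{-i\lambda_n x}\big(1+i\int_0^x e^{i\lambda_n t}K(t)\,dt\big)$ directly, which spares you the ``check $\|g_n\|\to 1$'' step.
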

	\begin{proof}
		Since by Lemma \ref{lemma:3.19}, the root vectors of $P_{1,K}$ and $P_{1,K}^*$ are $\omega$-linearly independent, we only need to show that they are quadratically close to the orthogonal basis $\{\psi_n\}_{n\in\mathbb{Z}}$. Then it will follow from Proposition \ref{prop:3.18} that they form Riesz bases of $L^2(0,2\pi)$. We will only prove that the root functions of $P_{1,K}$ form a Riesz basis of $L^2(0,2\pi)$ and explain in Remark \ref{rem:3.21} how the proof needs to be modified when considering the root functions of $P_{1,K}^*$ instead. 
		
		Firstly, by Theorem \ref{thm:3.14}, we can pick a rectangle $\mathcal{R}_{N,b}$ such that it contains exactly $(2N+1)$ eigenvalues $\{\lambda_n\}_{n=-N}^{N}$ of $P_{1,K}$ (counting algebraic multiplicity) and all other eigenvalues $\{\lambda_n\}_{|n|>N}$ of $P_{1,K}$ have algebraic multiplicity equal to one and moreover satisfy $|\lambda_n-n|\leq |\langle \psi_n,K\rangle|\leq \tfrac{1}{4\pi}$.
		
		Let $\phi_{\lambda_n}$ denote the root function/eigenfunction of $P_{1,K}$ corresponding to $\lambda_n$. We then get
		\begin{equation}
			\sum_{n\in\mathbb{Z}}\|\psi_n-\phi_{\lambda_n}\|^2=\sum_{n=-N}^N\|\psi_n-\phi_{\lambda_n}\|^2+\sum_{|n|>N}\|\psi_n-\phi_{\lambda_n}\|^2\:,
		\end{equation}
		which means we have to show that $\sum_{|n|>N}\|(2\pi)^{-1/2}e^{-inx}-\phi_{\lambda_n}\|^2<\infty$. Since for $|n|>N$, the algebraic multiplicity is equal to one, the functions $\phi_{\lambda_n}$ are all actual eigenfunctions given by (cf.\ \eqref{eq:3.10})
		\begin{equation} \phi_{\lambda_n}(x)=\frac{1}{\sqrt{2\pi}}e^{-i\lambda_nx}\left(1+i\int_0^xe^{i\lambda_n t}K(t)dt\right)\:.
		\end{equation}
		Moreover, we write $\lambda_n=n+\eta_n$, where $|\eta_n|\leq |\langle \psi_n,K\rangle|\leq \frac{1}{4\pi}$. For $|n|\geq N$, we estimate
		\begin{equation}
			\|\psi_n-\phi_{\lambda_n}\|^2\leq\frac{2}{\sqrt{2\pi}}\left(\|e^{-inx}-e^{-i\lambda_nx}\|^2+\left\|e^{-i\lambda_nx}\int_0^xe^{i\lambda_n t}K(t)dt\right\|^2\right)=\frac{2}{\sqrt{2\pi}}[(I)+(II)]\:.\label{eq:3.97}
		\end{equation}
		We continue with estimating the first term $(I)$:
		\begin{align}
			\|e^{-inx}-e^{-i\lambda_nx}\|^2&=\int_0^{2\pi}\left|e^{-inx}-e^{-i(n+\eta_n)x}\right|^2dx=\int_0^{2\pi}\left|1-e^{-i\eta_nx}\right|^2dx\label{eq:3.98}\\
			&\leq \frac{9}{4}\int_0^{2\pi}|\eta_n|^2x^2dx=6\pi^3|\eta_n|^2\leq 6\pi^3|\langle \psi_n,K\rangle|^2\:, \label{eq:3.99}
		\end{align}
		where we have used that for $|z|\leq 1/2$, one has the estimate $|1-e^z|\leq \frac{3}{2}|z|$. This applies to our case since $|-i\eta_nx|=|\eta_n|x\leq |\langle \psi_n,K\rangle|2\pi\leq 1/2$, for $x\in[0,2\pi]$ and because $|\langle \psi_n,K\rangle|\leq \frac{1}{4\pi}$. 
		Next, we focus on the second term $(II)$:
		\begin{align}
			(II)&=\int_0^{2\pi}\left|e^{-i\lambda_nx}\int_0^xe^{i\lambda_nt}K(t)dt\right|^2dx\leq C_K\int_0^{2\pi}\left|\int_0^xe^{i\lambda_nt}K(t)dt\right|^2dx\\&\leq 2C_K\left(\int_0^{2\pi}\left|\int_0^x(e^{i\lambda_nt}-e^{int})K(t)dt\right|^2dx+\int_0^{2\pi}\left|\int_0^xe^{int}K(t)dt\right|^2dx\right)\\&\leq 2C_K\left(\int_0^{2\pi}\left(\int_0^{2\pi}\left|(e^{i\lambda_nt}-e^{int})K(t)\right|dt\right)^2dx+\int_0^{2\pi}\left|\int_0^xe^{int}K(t)dt\right|^2dx\right) \label{eq:3.102}
			\\&\leq 2C_K\left(\int_0^{2\pi}\|K\|^2\|e^{int}-e^{i\lambda_nt}\|^2dx+\int_0^{2\pi}\left|\int_0^xe^{int}K(t)dt\right|^2dx\right) \label{eq:3.103}
			\\&\leq 2C_K\left(12\pi^4|\langle \psi_n,K\rangle|^2\|K\|^2+\int_0^{2\pi}\left|\int_0^xe^{int}K(t)dt\right|^2dx\right)\:. \label{eq:3.104}
		\end{align}
		For the first inequality of this estimate, we used that by Theorem \ref{thm:3.14}, we have $\sup_{n\in\mathbb{Z}}|\Im\lambda_n|\leq C$ and consequently there is some $C_K<\infty$ such that $|e^{-i\lambda_nx}|\leq C_K$ for all $n\in\mathbb{Z}$ and all $x\in[0,2\pi]$. The inequality $\eqref{eq:3.102}\leq\eqref{eq:3.103}$ follows from Cauchy-Schwarz and the inequality $\eqref{eq:3.103}\leq\eqref{eq:3.104}$ follows from $\|e^{int}-e^{i\lambda_nt}\|^2\leq 6\pi^3|\langle \psi_n,K\rangle|^2$, which can be argued for analogously to \eqref{eq:3.98} and \eqref{eq:3.99}. 
		
		Taking everything together, we continue to estimate \eqref{eq:3.97}
		\begin{align}
			&\|\psi_n-\phi_n\|^2\\\leq &\frac{2}{\sqrt{2\pi}}\left((6\pi^3+24C_K\pi^4\|K\|^2)|\langle \psi_n,K\rangle|^2+2C_K\int_0^{2\pi}\left|\int_0^xe^{int}K(t)dt\right|^2dx\right)    \\
			=&D_K|\langle \psi_n,K\rangle|^2+E_K\int_0^{2\pi}\left|\int_0^xe^{int}K(t)dt\right|^2dx\:,
		\end{align}
		for some constants $D_K,E_K<\infty$. We then get
		\begin{align}
			&\sum_{|n|>N}\|\psi_n-\phi_n\|^2\leq D_K\sum_{|n|>N}|\langle \psi_n,K\rangle|^2+E_K\sum_{|n|>N}\int_0^{2\pi}\left|\int_0^xe^{int}K(t)dt\right|^2dx\\\leq & D_K\sum_{n\in\mathbb{Z}}|\langle \psi_n,K\rangle|^2+E_K\sum_{n\in\mathbb{Z}}\int_0^{2\pi}\left|\int_0^xe^{int}K(t)dt\right|^2dx\label{eq:3.109}\\=&D_K\|K\|^2+\sqrt{2\pi}E_K\int_0^{2\pi}\left(\sum_{n\in\mathbb{Z}}|\langle \psi_n, \chi_{[0,x]}K\rangle|^2\right)dx\label{eq:3.110}\\= & D_K\|K\|^2+\widehat{E}_K\int_0^{2\pi}\|\chi_{[0,x]}K\|^2dx\leq D_K\|K\|^2+\widehat{E}_K\int_0^{2\pi}\|K\|^2dx\label{eq:3.111}\\=&(D_K+2\pi\widehat{E}_K)\|K\|^2<\infty\:,
		\end{align}
		where $\widehat{E}_K:=\sqrt{2\pi}E_K$. We used Fubini's theorem to interchange the summation with the integral in \eqref{eq:3.109} and $\chi_{[0,x]}$ in \eqref{eq:3.110} denotes the indicator function of the interval $[0,x]$. For the inequality in \eqref{eq:3.111}, we estimated $\|\chi_{[0,x]}K\|\leq\|K\|$ for all $x\in[0,2\pi]$. This finishes the proof.
	\end{proof}
	
	\begin{remark} $(i)$ Since for $\mu_n\in\sigma(P_{1,K}^*)$ and $|n|>N$, where $N$ is chosen large enough as in the proof of Theorem \ref{thm:3.20}, the eigenfunctions of $P_{1,K}^*$ are given by
		\begin{equation}
			\widetilde{\phi}_{\mu_n}(x)=\frac{1}{\sqrt{2\pi}}e^{-i\mu_n x}\:,
		\end{equation}
		where $|n-\mu_n|\leq |\langle \psi_n,K\rangle|\leq \tfrac{1}{4\pi}$, since $\mu_n\in\sigma(P_{1,K}^*)$ implies $\lambda_n=\overline{\mu_n}\in\sigma(P_{1,K})$ by the spectral mapping theorem. The estimate
		\begin{equation}
			\sum_{|n|>N}\|\widetilde{\phi}_n-\psi_n\|^2<\infty
		\end{equation}
		now follows analogously to \eqref{eq:3.98} and \eqref{eq:3.99}.
		\label{rem:3.21}
		
		$(ii)$ We also provide an alternative sketch of proof for the Riesz basis property of the root vectors of $P_{1,K}^*$. For $\mu_n\in\sigma(P_{1,K}^*)$, a direct calculation shows that the root spaces are given by
		\begin{equation} \label{eq:3.115}
			\mathcal{R}(\mu_n,P_{1,K}^*)=\mbox{span}\{e^{-i\mu_n x}, xe^{-i\mu_n x},\dots, x^m xe^{-i\mu_n x}\}\:,
		\end{equation}
		where $m=m_a(\mu_n,P_{1,K}^*)-1$. In particular, we know by Theorem \ref{thm:3.14}, that all but finitely many eigenvalues of $P_{1,K}^*$ have algebraic multiplicity equal to one and can be found inside disks around integers $n$, where $|n|>N$, with arbitrarily small radius. For such an eigenvalue, the root space coincides with the actual eigenspace and is given by
		\begin{equation}
			\mathcal{R}(\mu_n,P_{1,K}^*)=\ker(P_{1,K}^*-\mu_n)=\mbox{span}\{e^{-i\mu_nx}\}\:.
		\end{equation}
		On the other hand, it is a well-known result by Duffin/Eachus \cite{DE42, Y80} that if a sequence of complex numbers $\{\kappa_n\}_n$ satisfies $|\kappa_n-n|\leq L$ for some $L <\ln(2)/\pi$, then $\{e^{-i\kappa_n x}\}_{n\in\mathbb{Z}}$ is a Riesz basis of $L^2(-\pi,\pi)$ and using that $\sup_{n\in\mathbb{Z}}|\Im (\kappa_n)|\leq C<\infty$, it is also not difficult to show that it is also a Riesz basis of $L^2(0,2\pi)$. Choosing $N\in\mathbb{N}$ large enough such that $|\mu_n-n|\leq L<\ln(2)/\pi$ for all $|n|>N$, we define the sequence $\{\kappa_n\}_{n\in\mathbb{Z}}$, where
		\begin{equation}
			\kappa_n:=\begin{cases} \mu_n\quad&\mbox{if}\quad |n|>N\\ n\quad&\mbox{if}\quad |n|\leq N\:,\end{cases}
		\end{equation}
		which can immediately be seen to satisfy the assumptions of the theorem by Duffin/Eachus and hence, $\{e^{-inx}\}_{n=-N}^N\cup\{e^{-i\mu_nx}\}_{|n|>N}$ is a Riesz basis of $L^2(0,2\pi)$. It now follows from \cite[Lemma II.4.11]{AA95} that a change of finitely many exponents in such a Riesz basis yields another Riesz basis. A slight modification of the argument given in \cite[Lemma II.4.11]{AA95} also covers the case when finitely many exponents are changed to allow for repeated numbers, corresponding to root spaces of the form \eqref{eq:3.115}, cf.\ \cite[Section II.4.5]{AA95}. 
	\end{remark}

	\section{Dissipative extensions of $A_{min}$} \label{sec:4}
	%%%%%%%%%%%%%%%%%%%%%%%%
	%%%%%%%%%%%%%%%%%%%%%%%%
	For the remainder of this paper, we consider extensions $A_{\rho,k}$ of $A_{min}$ that are
	maximally dissipative. 
	
	\subsection{Basic definitions and characterization of all dissipative extensions of $A_{min}$}
	
	We recall the definition of (maximally) dissipative operators:
	\begin{definition}
		A densely defined operator $A$ in a Hilbert space $\mathcal{H}$ is called \emph{dissipative} if 
		\begin{equation}
			\Im\langle f,Af\rangle\geq 0
		\end{equation}
		for every $f\in\dom(A)$. Moreover, if there is no non-trivial dissipative operator extension of $A$, then $A$ is called \emph{maximally dissipative}. 
	\end{definition}
	There are the following equivalent characterizations of maximally dissipative operators:
	\begin{proposition}{\cite{P59}} Let $A$ be a dissipative operator in $\mathcal{H}$. Then the following are equivalent:
		\begin{enumerate}
			\item $A$ is maximally dissipative
			\item $-A^*$ is dissipative
			\item there is a $z\in\mathbb{C}^-$ with $z\in\varrho(A)$
			\item $\mathbb{C}^-\subseteq \varrho(A)$
			\item $(iA)$ generates a strongly continuous one-parameter semigroup of contractions
		\end{enumerate}
	\end{proposition}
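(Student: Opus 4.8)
The plan is to prove the cyclic chain $(1)\Rightarrow(3)\Rightarrow(4)\Rightarrow(1)$ and then attach $(2)$ and $(5)$ to this block; throughout I take $A$ to be closed (which is automatic once $A$ is maximally dissipative, and holds for the operators $A_{\rho,k}$ of interest). The workhorse is the elementary \emph{dissipativity estimate}: expanding $\|(A-\lambda)f\|^{2}$ and using $\Im\langle Af,f\rangle=-\Im\langle f,Af\rangle\le 0$ gives
\begin{equation*}
\|(A-\lambda)f\|\ge|\Im\lambda|\,\|f\|,\qquad \lambda\in\mathbb{C}^{-},\ f\in\dom(A),
\end{equation*}
so $A-\lambda$ is injective with closed range, and $\|(A-\lambda)^{-1}\|\le 1/|\Im\lambda|$ whenever in addition $\lambda\in\varrho(A)$. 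Then $(3)\Rightarrow(4)$ is a connectedness argument: $\varrho(A)\cap\mathbb{C}^{-}$ is open, non-empty by hypothesis, and relatively closed in $\mathbb{C}^{-}$ because the resolvent norms stay bounded along sequences converging inside $\mathbb{C}^{-}$, so a Neumann series propagates $\varrho(A)$ over the connected set $\mathbb{C}^{-}$; $(4)\Rightarrow(3)$ is trivial. For $(4)\Rightarrow(1)$: if $B\supseteq A$ is dissipative and $\lambda\in\mathbb{C}^{-}$, then $\ran(B-\lambda)\supseteq\ran(A-\lambda)=\mathcal{H}$ while the estimate applied to $B$ makes $B-\lambda$ injective, so every $g\in\dom(B)$ already lies in $\dom(A)$ and $B=A$.

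The crux is $(1)\Rightarrow(3)$. Fix $\lambda_{0}=-i$; by the estimate $A+i$ is injective with $\|(A+i)f\|\ge\|f\|$, hence has closed range. Suppose for contradiction that $\ran(A+i)\ne\mathcal{H}$ and pick $0\ne\psi\perp\ran(A+i)$; then $\langle A\varphi,\psi\rangle=i\langle\varphi,\psi\rangle$ for all $\varphi\in\dom(A)$, and taking $\varphi=\psi$ would give $\Im\langle\psi,A\psi\rangle=-\|\psi\|^{2}<0$, so in fact $\psi\notin\dom(A)$. I would then define the one-dimensional extension $B$ on $\dom(A)\dotplus\mathbb{C}\psi$ by $B(\varphi+c\psi)=A\varphi+ci\psi$; this is a \emph{proper} extension of $A$, and a short expansion of $\Im\langle\varphi+c\psi,B(\varphi+c\psi)\rangle$ shows the cross terms cancel -- precisely because $\langle A\varphi,\psi\rangle=i\langle\varphi,\psi\rangle$ and $B\psi=i\psi$ -- leaving $\Im\langle\varphi,A\varphi\rangle+|c|^{2}\|\psi\|^{2}\ge0$, so $B$ is dissipative, contradicting $(1)$. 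Hence $\ran(A+i)=\mathcal{H}$, and together with injectivity and the lower bound this gives $-i\in\varrho(A)\cap\mathbb{C}^{-}$, i.e.\ $(3)$. (If one prefers not to assume $A$ closed at the outset, one first checks that a dissipative $A$ is closable with $\overline{A}$ again dissipative, so that $(1)$ forces $A=\overline{A}$.)

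It remains to bring in $(2)$ and $(5)$. For $(1)\Rightarrow(2)$: having $\mathbb{C}^{-}\subseteq\varrho(A)$ with $\|(A-\lambda)^{-1}\|\le1/|\Im\lambda|$, passing to Hilbert-space adjoints gives $\mathbb{C}^{+}\subseteq\varrho(A^{*})$ and $\|(A^{*}-\mu)f\|\ge|\Im\mu|\,\|f\|$ for $\mu\in\mathbb{C}^{+}$; expanding this square and letting $\Im\mu\to+\infty$ forces $\Im\langle A^{*}f,f\rangle\ge0$, i.e.\ $-A^{*}$ is dissipative. For $(2)\Rightarrow(4)$: the estimate applied to the dissipative operator $-A^{*}$ makes $A^{*}-\overline{\lambda}$ injective for $\lambda\in\mathbb{C}^{-}$, so $\ran(A-\lambda)$, having trivial orthogonal complement $\ker(A^{*}-\overline{\lambda})$, is dense; being also closed (by the estimate and closedness of $A$) it is all of $\mathcal{H}$, which is $(4)$. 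Finally, $(4)\Leftrightarrow(5)$ is the Hille--Yosida theorem read through the substitution $\mu\mapsto-i\mu$, which maps $\{\Re\mu>0\}$ onto $\mathbb{C}^{-}$, turns $(\mu-iA)^{-1}$ into $i(A+i\mu)^{-1}$, and identifies $\Re\mu$ with $|\Im(-i\mu)|$: thus $(4)$ together with the resolvent bound is exactly the generation condition for a $C_{0}$-semigroup of contractions with generator $iA$, and conversely such a semigroup forces $\{\Re\mu>0\}\subseteq\varrho(iA)$, hence $\mathbb{C}^{-}\cap\varrho(A)\ne\emptyset$ and, by $(3)\Rightarrow(4)$, $\mathbb{C}^{-}\subseteq\varrho(A)$.

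I expect the main obstacle to be $(1)\Rightarrow(3)$: one must produce an honest proper dissipative extension out of a single deficiency vector, and the two delicate points are (a) that the image $B\psi$ is forced to be $i\psi$ -- any other multiple breaks the cancellation of the cross terms in $\Im\langle\varphi+c\psi,B(\varphi+c\psi)\rangle$ -- and (b) that $\ran(A+i)$ is closed, i.e.\ that $A$ is closed, so that the orthogonal-complement argument actually supplies a deficiency vector in the range rather than merely in its closure. All the other implications are short once the dissipativity estimate and the Hille--Yosida theorem are available.
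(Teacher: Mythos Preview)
The paper does not prove this proposition at all: it is quoted as a known result with the citation \cite{P59} (Phillips, 1959) and no argument is supplied. So there is nothing in the paper to compare your proof against.

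That said, your proof sketch is correct and self-contained. The dissipativity estimate, the connectedness argument for $(3)\Rightarrow(4)$, the one-dimensional extension in $(1)\Rightarrow(3)$, and the reduction of $(4)\Leftrightarrow(5)$ to Hille--Yosida are all standard and you have carried them out cleanly. Two small remarks: in $(1)\Rightarrow(3)$ your computation of the imaginary part of $\langle \varphi+c\psi, B(\varphi+c\psi)\rangle$ is right---the cross terms combine to a real number precisely because $\langle A\varphi,\psi\rangle=i\langle\varphi,\psi\rangle$ forces $ci\langle\varphi,\psi\rangle+\overline{c}\,\overline{\langle A\varphi,\psi\rangle}$ to be $-2\,\Im(c\langle\varphi,\psi\rangle)\in\bbR$; and your handling of the closedness issue is adequate, since condition $(3)$ already forces $A$ to be closed (a nonempty resolvent set implies closedness), so the only place one needs the observation ``maximally dissipative $\Rightarrow$ closed'' is at the start of $(1)\Rightarrow(3)$, exactly where you put it. For the purposes of this paper, where the proposition is merely background for the analysis of the concrete operators $A_{\rho,k}$, a citation would have sufficed---but your argument stands on its own.
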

	For $A_{min}$ to be dissipative, it is necessary and sufficient that $\Im V(x)=V_I(x)\geq 0$ almost everywhere, which we will assume from now on:
	
	\begin{hypothesis} For the remainder of the paper, we assume that $V_I(x)\geq 0$ almost everywhere and that $V_I(x)>0$ on a set of positive Lebesgue measure.
		\label{hypo:4.3}
	\end{hypothesis}
	
	Under this assumption, the operator $V_I$ is a bounded, non-negative, selfadjoint operator not equal to the zero operator and possesses a unique
	non-negative selfadjoint square-root, which we will denote by $V_I^{1/2}$. It is the maximal operator of multiplication by the function $V_I^{1/2}(x)$ and $\ran(V_I^{1/2})\subseteq L^2(0,2\pi)$ denotes the range of this operator, i.e.
	\begin{equation}
		\ran(V_I^{1/2})=\{V_I^{1/2}f\:|\: f\in L^2(0,2\pi)\}\:.
	\end{equation}
	
	From an application of the results obtained in \cite{F18}, we have the following characterization of \emph{all} maximally dissipative extensions of $A_{min}$:
	\begin{proposition}
		Let $\rho\in \mathbb{C}$ and $k\in L^2(0,2\pi)$. The operator $A_{\rho,k}$ given by
		\begin{align}
			A_{\rho,k}: \quad\dom(A_{\rho,k})&=\{f\in H^1(0,2\pi)\:|\: f(0)=\rho f(2\pi)\} \\(A_{\rho,k}f)(x)&=if'(x)+V(x)f(x)+f(2\pi)k(x)
		\end{align}
		is (maximally) dissipative if and only if $k\in\ran(V_I^{1/2})$ and 
		\begin{equation} \label{eq:3.6a}
			1-|\rho|^2\geq \frac{1}{2}\left\|V_I^{-1/2}k\right\|^2\:.
		\end{equation}
		
		Moreover, all maximally dissipative extensions of $A_{min}$ are of the above form, i.e.\ if $\hat{A}$ is a maximally dissipative extension of $A_{min}$, then there exist unique $\rho\in\C$ and $k\in \ran(V_I^{1/2})$ such that $\hat{A}=A_{\rho,k}$.
		\label{thm:4.2}
	\end{proposition}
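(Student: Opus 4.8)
The plan is to establish the equivalence in three parts: $(a)$ that every $A_{\rho,k}$ satisfying the two conditions is maximally dissipative; $(b)$ that both conditions are already forced by mere dissipativity of $A_{\rho,k}$; and $(c)$ that no maximally dissipative extension of $A_{min}$ falls outside the family $\{A_{\rho,k}\}$, which is the point where the general extension theory of \cite{F18} is indispensable.

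For $(a)$ I would start from Green's formula on $(0,2\pi)$. Integrating by parts in $i\int_0^{2\pi}\overline f f'\,dx$ and using $f(0)=\rho f(2\pi)$ gives, for $f\in\dom(A_{\rho,k})$,
\begin{equation}
\Im\langle f,A_{\rho,k}f\rangle=\tfrac12(1-|\rho|^2)|f(2\pi)|^2+\|V_I^{1/2}f\|^2+\Im\!\big(f(2\pi)\langle f,k\rangle\big),
\end{equation}
and the analogous calculation for $A_{\rho,k}^*$, using the non-local boundary condition in \eqref{eq:3.6}, yields $\Im\langle g,-A_{\rho,k}^*g\rangle=\tfrac12(1-|\rho|^2)|g(0)|^2+\|V_I^{1/2}g\|^2-\Re\!\big(\rho\,\overline{g(0)}\,\langle ik,g\rangle\big)-\tfrac12|\langle k,g\rangle|^2$. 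Writing $k=V_I^{1/2}h$ with $h:=V_I^{-1/2}k\in\overline{\ran(V_I^{1/2})}$ one has $\langle f,k\rangle=\langle V_I^{1/2}f,h\rangle$, hence $|\langle f,k\rangle|\le\|V_I^{1/2}f\|\,\|V_I^{-1/2}k\|$. Inserting this bound into the two identities reduces both nonnegativity assertions to the positive semidefiniteness of an explicit real $2\times2$ quadratic form in $\big(|f(2\pi)|,\|V_I^{1/2}f\|\big)$, respectively $\big(|g(0)|,\|V_I^{1/2}g\|\big)$, and a one-line determinant computation shows that this is equivalent to \eqref{eq:3.6a}. Hence $A_{\rho,k}$ (which is closed, being a bounded perturbation of a closed first-order differential operator) is dissipative and $-A_{\rho,k}^*$ is dissipative, so $A_{\rho,k}$ is maximally dissipative by the Phillips criterion of \cite{P59}.

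For $(b)$, assume only that $A_{\rho,k}$ is dissipative and fix $f_*\in\dom(A_{\rho,k})$ with $f_*(2\pi)=1$. If $k\notin\ran(V_I^{1/2})$, then by Douglas' majorization lemma the functional $f\mapsto\langle f,k\rangle$ is not controlled by $f\mapsto\|V_I^{1/2}f\|$; exploiting density of $\dom(A_{min})$ in $L^2(0,2\pi)$ together with rescaling and a phase choice, one constructs $g_n\in\dom(A_{min})$ with $\|V_I^{1/2}g_n\|\to0$ but $\Im\langle g_n,k\rangle\to-\infty$, so that $\Im\langle f_*+g_n,A_{\rho,k}(f_*+g_n)\rangle\to-\infty$, contradicting dissipativity. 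Once $k=V_I^{1/2}h$ is known, I minimize $\|V_I^{1/2}f\|^2+\Im\langle f,k\rangle$ over $f\in\dom(A_{\rho,k})$ with $f(2\pi)=1$: because $\{V_I^{1/2}f:f\in\dom(A_{min})\}$ is dense in $\overline{\ran(V_I^{1/2})}$, this infimum coincides with the infimum of $\|u\|^2+\Im\langle u,h\rangle$ over $u\in\overline{\ran(V_I^{1/2})}$, and completing the square by $u=\tfrac i2h+w$ shows the value is $-\tfrac14\|h\|^2=-\tfrac14\|V_I^{-1/2}k\|^2$; feeding this into $\Im\langle f,A_{\rho,k}f\rangle\ge0$ forces $\tfrac12(1-|\rho|^2)\ge\tfrac14\|V_I^{-1/2}k\|^2$, i.e.\ \eqref{eq:3.6a}.

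For $(c)$ I would invoke the parametrization of all maximally dissipative extensions of a dual pair obtained in \cite{F18}, applied to the dual pair formed by $A_{min}$ and the closure of $i\partial_x+\overline V$ acting on $\dom(A_{min})$: its abstract boundary data are two-dimensional and can be realized through the endpoint evaluations $f\mapsto(f(0),f(2\pi))$, and one checks that \cite{F18}'s maximal-dissipativity condition translates verbatim into ``$k\in\ran(V_I^{1/2})$ and \eqref{eq:3.6a}''; the bijectivity inherent in that parametrization yields both that $\{A_{\rho,k}\}$ exhausts all maximally dissipative extensions of $A_{min}$ and that $(\rho,k)$ is uniquely determined (which one can also see by hand: equality $A_{\rho_1,k_1}=A_{\rho_2,k_2}$ forces equal domains, hence $\rho_1=\rho_2$, and then equal actions, hence $k_1=k_2$). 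I expect $(c)$ to be the real obstacle, since the elementary form computations in $(a)$ and $(b)$ do not themselves rule out maximally dissipative extensions of $A_{min}$ whose action is not of the displayed non-local form; it is exactly the structural theorem of \cite{F18} that excludes such exotic extensions.
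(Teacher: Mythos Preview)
The paper does not actually give a proof of this proposition: it simply states the result as ``an application of the results obtained in \cite{F18}'' and leaves it at that. Your proposal, by contrast, supplies a self-contained argument for the equivalence in parts $(a)$ and $(b)$ via the quadratic-form identity
\[
\Im\langle f,A_{\rho,k}f\rangle=\tfrac12(1-|\rho|^2)|f(2\pi)|^2+\|V_I^{1/2}f\|^2+\Im\!\big(f(2\pi)\langle f,k\rangle\big),
\]
together with the Cauchy--Schwarz/Douglas bound $|\langle f,k\rangle|\le\|V_I^{-1/2}k\|\,\|V_I^{1/2}f\|$, and then reserves the citation to \cite{F18} only for the exhaustion statement $(c)$. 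This is a genuinely more informative route: the $2\times2$ discriminant argument makes the numerical condition \eqref{eq:3.6a} transparent (it is exactly the positive-semidefiniteness of the form in the variables $|f(2\pi)|$ and $\|V_I^{1/2}f\|$), and your necessity argument in $(b)$ --- constructing violating sequences when $k\notin\ran(V_I^{1/2})$, then minimising by completing the square once $k=V_I^{1/2}h$ --- is correct and gives a direct explanation of why $\ran(V_I^{1/2})$ and the constant $\tfrac12$ appear. The adjoint computation in $(a)$ is slightly more than ``one line'' (after bounding $|\langle k,g\rangle|\le\|V_I^{-1/2}k\|\,\|V_I^{1/2}g\|$ one first eliminates the third variable before reducing to a $2\times2$ form), but the conclusion is right. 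What your approach buys is that only the structural part --- that \emph{every} maximally dissipative extension of $A_{min}$ has the displayed non-local form --- genuinely needs the abstract parametrisation from \cite{F18}; the paper's bare citation obscures this.
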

	\begin{remark}
		The operator $V^{-1/2}_I$ in \eqref{eq:3.6a} denotes the inverse of $V_I^{1/2}$ on its range $\ran(V^{1/2}_I)$ which reduces the operator $V_I^{1/2}$. 
	\end{remark}

	\subsection{Extensions of $A_{min}$ with a real eigenvalue}
	
	In this section, we investigate how to construct maximally dissipative extensions $A_{\rho,k}$ that have real eigenvalues. 
	Similar work was done in \cite{FNW24}, where dissipative Schr\"odinger operators on the half-line were studied. In principle, it would be possible to apply the methods used \cite{FNW24} to obtain the results presented in this section. However, since the operators $A_{\rho,k}$ are first-order and have no essential spectrum, we are able to give more straightforward and shorter proofs and avoid having to use the more technical methods used in \cite{FNW24}.
	
	To begin with, we recall the following result:
	
	\begin{lemma}{(Sz.-Nagy)} Let $A$ be a maximally dissipative operator and let $\lambda\in\mathbb{R}$ be an eigenvalue of $A$ with corresponding eigenvector $f\in\dom(A)$, i.e.\ $Af=\lambda f$. Then, $f\in\dom(A^*)$ and $A^*f=\lambda f$.
		\label{lemma:5.1}
	\end{lemma}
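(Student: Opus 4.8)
The plan is to exploit the maximal dissipativity of $A$ together with the assumption $\lambda \in \mathbb{R}$ to show that the eigenvector $f$ actually lies in $\dom(A^*)$ and is an eigenvector of $A^*$ as well. The key observation is that for a maximally dissipative operator, $\Im\langle g, Ag\rangle \geq 0$ for all $g \in \dom(A)$, and by Proposition (Phillips), $-A^*$ is also dissipative, i.e.\ $\Im\langle g, A^*g\rangle \leq 0$ for all $g \in \dom(A^*)$. So the idea is that the eigenvalue $\lambda$ being real forces $f$ to be on the ``boundary'' of the dissipativity inequality, and a standard argument then propagates this to the adjoint.

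First I would consider the operator $B := A - \lambda$, which is still maximally dissipative (shifting by a real scalar preserves the numerical range condition, since $\Im\langle g, (A-\lambda)g\rangle = \Im\langle g, Ag\rangle$ for $\lambda$ real), and note that $Bf = 0$, i.e.\ $f \in \ker(B)$. Pick any $\mu \in \mathbb{C}^-$; by Phillips' theorem $\mu \in \varrho(B)$, so $(B-\mu)^{-1}$ is a bounded operator, and the Cayley transform $T := (B-\bar\mu)(B-\mu)^{-1}$ (equivalently $T = I + (\mu - \bar\mu)(B-\mu)^{-1}$, defined on all of $\mathcal{H}$) is a \emph{contraction}: this is the standard equivalence between maximal dissipativity of $B$ and $\|T\| \leq 1$. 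Now $Bf = 0$ means $f = -\mu(B-\mu)^{-1}(\ldots)$; more directly, from $Bf=0$ we compute $(B-\mu)f = -\mu f$, hence $(B-\mu)^{-1}f = -\mu^{-1}f$ (note $\mu \neq 0$ since $\mu \in \mathbb{C}^-$), and therefore
\begin{equation}
	Tf = f + (\mu - \bar\mu)(B-\mu)^{-1}f = f - \frac{\mu-\bar\mu}{\mu}f = \frac{\bar\mu}{\mu}f.
\end{equation}
Since $|\bar\mu/\mu| = 1$, the vector $f$ is an eigenvector of the contraction $T$ with unimodular eigenvalue. A contraction acting on a vector as multiplication by a unimodular constant forces that vector into the kernel of $I - T^*T$ (if $\|Tf\| = \|f\|$ and $\|T\|\le 1$ then $T^*Tf = f$), and moreover $T^*f = \overline{(\bar\mu/\mu)}f = (\mu/\bar\mu)f$: indeed $\|Tf\|=\|f\|$ together with $\|T^*\|\le1$ gives $\langle f, TT^* f\rangle \ge \|T^*f\|^2 \ge \langle T^*f, T^* (\bar\mu/\mu)^{-1} Tf\rangle$-type bookkeeping, but the clean way is that $TT^*$ and $T^*T$ agree on eigenvectors with unimodular eigenvalue via the identity $\|f\|^2 = \|Tf\|^2 \le \|T\|^2\|f\|^2 \le \|f\|^2$ forcing equality throughout, and then $T^* Tf = f$ gives $T^*f = T^*(\mu/\bar\mu)Tf = (\mu/\bar\mu)T^*Tf = (\mu/\bar\mu)f$.

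Finally I would translate back: $T^* = (B^* - \mu)(B^* - \bar\mu)^{-1}$ is the Cayley transform of $B^* = A^* - \lambda$ (using that $\bar\mu \in \mathbb{C}^+ \subseteq \varrho(B^*)$ because $-B^*$ is dissipative, hence maximally dissipative, so $\mathbb{C}^+ \subseteq \varrho(B^*)$). From $T^*f = (\mu/\bar\mu)f$ we get $f \in \ran((B^*-\bar\mu)^{-1}) = \dom(B^*) = \dom(A^*)$, and unwinding the Cayley transform, $(B^* - \mu)(B^*-\bar\mu)^{-1}f = (\mu/\bar\mu)f$ becomes, after setting $g = (B^*-\bar\mu)^{-1}f$ so that $(B^*-\bar\mu)g = f$: $(B^*-\mu)g = (\mu/\bar\mu)f = (\mu/\bar\mu)(B^*-\bar\mu)g$, which simplifies to $B^* g = 0$ after collecting terms (the coefficient of $g$: $(1 - \mu/\bar\mu)B^*g = \mu - \bar\mu\cdot(\mu/\bar\mu) = 0$, wait — recompute: $(B^*-\mu)g - (\mu/\bar\mu)(B^*-\bar\mu)g = (1-\mu/\bar\mu)B^*g + (-\mu + \mu)g = (1-\mu/\bar\mu)B^*g = 0$), and since $\mu \neq \bar\mu$ we conclude $B^*g = 0$. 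Then $f = (B^*-\bar\mu)g = -\bar\mu g$, so $g$ is a nonzero multiple of $f$ and $B^*f = 0$, i.e.\ $A^*f = \lambda f$. The main obstacle is getting the Cayley-transform bookkeeping exactly right — matching domains, checking $\bar\mu \in \varrho(B^*)$, and verifying the adjoint of the Cayley transform is the Cayley transform of the adjoint with the conjugate parameter — but this is all standard operator theory and no genuinely hard estimate is involved.
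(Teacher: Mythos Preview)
The paper does not actually prove this lemma; it is stated as a classical result attributed to Sz.-Nagy and then invoked without proof in Theorem \ref{thm:5.2}. So there is no ``paper's own proof'' to compare against.

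Your Cayley-transform argument is correct and is in fact the standard route to this result. The essential steps --- reduce to $\lambda=0$ by replacing $A$ with $B=A-\lambda$, observe that the Cayley transform $T=(B-\bar\mu)(B-\mu)^{-1}$ is a contraction carrying $f$ to $(\bar\mu/\mu)f$ with $|\bar\mu/\mu|=1$, use the elementary fact that a contraction with unimodular eigenvalue $\alpha$ at $f$ satisfies $T^*f=\bar\alpha f$ (since $\|Tf\|=\|f\|$ forces $T^*Tf=f$ by the equality case of Cauchy--Schwarz, whence $T^*f=\alpha^{-1}T^*Tf=\bar\alpha f$), and then undo the Cayley transform --- are all present and correctly executed. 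The exposition is a bit ragged in places: the parenthetical justification in step~4 meanders, and the ``wait --- recompute'' in step~5 should be cleaned up before submission. One tidy way to finish step~5 is to use the identity $T=I+(\mu-\bar\mu)(B-\mu)^{-1}$ you already noted, which gives $T^*=I+(\bar\mu-\mu)(B^*-\bar\mu)^{-1}$ immediately; then $T^*f=(\mu/\bar\mu)f$ rearranges to $(B^*-\bar\mu)^{-1}f=-\bar\mu^{-1}f$, so $f\in\dom(B^*)$ and $B^*f=0$, avoiding the auxiliary vector $g$ entirely.
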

	
	The first main result of this section is the following
	
	\begin{theorem} \label{thm:5.2}
		Let $A_{\rho,k}$ be a maximally dissipative extension of $A_{min}$. Then, we have the following:
		\begin{itemize}
			\item[(i)] If $\lambda\in\sigma(A_{\rho,k})\cap\mathbb{R}$, then $m_a(\lambda, A_{\rho,k})=1$. 
			\item[(ii)] $\sigma(A_{\rho,k})$ contains at most one real point. 
			\item[(iii)] For a given $\lambda\in\mathbb{R}$, there exists at most one extension $A_{\rho,k}$ such that $\lambda\in\sigma(A_{\rho,k})$.
			\item[(iv)] If $A_{\rho,k}$ has a real eigenvalue $\lambda$, then the corresponding eigenspace is orthogonal to the root spaces corresponding to all other eigenvalues of $A_{\rho,k}$.
		\end{itemize}
	\end{theorem}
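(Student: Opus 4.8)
The plan is to leverage the similarity $A_{\rho,k} = W^{-1}(P_{1,K}+\eta)W$ from Lemma~\ref{lemma:3.4} so that all four claims may be checked for $P_{1,K}$, whose eigenvalues are the zeros of the entire function $\Phi$ via Theorem~\ref{thm:3.6}, together with the Sz.-Nagy lemma (Lemma~\ref{lemma:5.1}) and Proposition~\ref{thm:4.2}. First I would establish $(i)$: if $\lambda\in\mathbb{R}$ is an eigenvalue of the maximally dissipative $A_{\rho,k}$ with eigenvector $f$, then by Lemma~\ref{lemma:5.1} also $A_{\rho,k}^*f=\lambda f$; comparing the action $if'+Vf+f(2\pi)k=\lambda f$ with $if'+\overline V f=\lambda f$ (using the description \eqref{eq:3.6}) forces $f(2\pi)k(x) = (\overline V - V)f(x) = -2iV_I(x)f(x)$ a.e., and the vanishing of $f$ at an endpoint would trivialize $f$, so $f(2\pi)\neq 0$; then $f$ satisfies a single first-order linear ODE with a one-dimensional solution space, so $\dim\ker(A_{\rho,k}-\lambda)=1$, and since $f\in\dom(A_{\rho,k}^*)$ one sees directly that $(A_{\rho,k}-\lambda)^2 g = 0$ already forces $g$ proportional to $f$ (the generalized eigenvector equation $(A_{\rho,k}-\lambda)g=f$ would require $f\in\ran(A_{\rho,k}-\lambda)$, contradicting $f\perp\ker(A_{\rho,k}^*-\lambda)=\mathrm{span}\{f\}$ unless $f=0$). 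Hence $m_a(\lambda,A_{\rho,k})=1$.

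Next, for $(ii)$ and $(iii)$ simultaneously, I would argue that a real eigenvalue pins down the data $(\rho,k)$. Suppose $\lambda\in\mathbb{R}$ is an eigenvalue; normalizing the eigenfunction by $f(2\pi)=1$, the relation $k(x) = -2iV_I(x)f(x)$ derived above shows $k$ is determined by $f$, and $f$ in turn solves $if' + (V-\lambda)f = -f(2\pi)k = 2iV_I f$, i.e.\ $f' = -i(V-\lambda)f + 2V_I f = -i(V_R-\lambda)f + 3V_I f$ — wait, more carefully $if'+Vf = \lambda f - k$ gives $f' = -i(V-\lambda)f + i k$; substituting $k=-2iV_If$ yields $f'=-i(V-\lambda)f+2V_If$, a closed linear ODE whose solution with $f(0)=f(2\pi)$ (after conjugating by $W$, the periodic boundary condition for $P_{1,K}$) and $f(2\pi)=1$ is uniquely determined by $\lambda$. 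This determines $f$, hence $k$, hence (from the boundary condition $f(0)=\rho f(2\pi)=\rho$) also $\rho$. Thus for each real $\lambda$ there is at most one admissible $(\rho_\lambda,k_\lambda)$, giving $(iii)$; and if two distinct reals $\lambda_1<\lambda_2$ were both in $\sigma(A_{\rho,k})$ for the same operator, the corresponding normalized eigenfunctions would both satisfy the ODE and boundary conditions, and subtracting gives $f_1-f_2$ an eigenfunction-type solution vanishing at both endpoints, forcing $f_1\equiv f_2$ and thus $\lambda_1=\lambda_2$ from the differential equation — contradiction, proving $(ii)$.

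For $(iv)$, let $\lambda\in\mathbb{R}$ be the real eigenvalue with eigenvector $f$ (so by Lemma~\ref{lemma:5.1}, $A_{\rho,k}^*f=\lambda f$), and let $g$ be a root vector for some other eigenvalue $\mu\neq\lambda$, say $(A_{\rho,k}-\mu)^m g = 0$ with $(A_{\rho,k}-\mu)^{m-1}g =: h\neq 0$ an honest eigenvector. I would show $\langle f,g\rangle=0$ by the standard pairing argument: from $\langle f,(A_{\rho,k}-\mu)^m g\rangle = 0$ and $A_{\rho,k}^*f=\lambda f$ (so $(A_{\rho,k}^*-\bar\mu)f = (\lambda-\bar\mu)f = (\lambda-\mu)f$, using $\lambda$ real), we get $0 = \langle (A_{\rho,k}^*-\bar\mu)^m f, g\rangle = (\lambda-\mu)^m\langle f,g\rangle$, and since $\mu\neq\lambda$ this forces $\langle f,g\rangle = 0$. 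This shows the eigenspace $\mathrm{span}\{f\}$ is orthogonal to every root space $\mathcal R(\mu,A_{\rho,k})$ with $\mu\neq\lambda$.

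The main obstacle I anticipate is part $(iii)$: carefully verifying that the constraint $k\in\ran(V_I^{1/2})$ together with the real-eigenvalue condition genuinely determines $(\rho,k)$ uniquely, rather than merely constraining them, requires tracking the interplay between the ODE solution, the normalization, and the dissipativity inequality \eqref{eq:3.6a}; some of this is deferred to the explicit construction in Theorem~\ref{thm:4.9}, so here I would content myself with the uniqueness half, which follows cleanly from the ODE rigidity argument above. Parts $(i)$, $(ii)$, $(iv)$ are comparatively routine once the identity $k = -2iV_I f$ (a consequence of Sz.-Nagy) is in hand.
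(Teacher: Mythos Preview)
Your arguments for parts $(i)$, $(iii)$, and $(iv)$ are essentially correct and match the paper's approach: $(i)$ and $(iv)$ are the standard pairing tricks using $A_{\rho,k}^*f=\lambda f$ from Sz.-Nagy, and for $(iii)$ your ODE-rigidity argument is exactly what the paper does (observe that your closed ODE $f'=-i(V-\lambda)f+2V_If$ is just the adjoint equation $if'+\overline V f=\lambda f$, which involves neither $\rho$ nor $k$, so the normalized eigenfunction, and hence $\rho=f(0)$ and $k=-2iV_If$, are determined by $\lambda$ alone). The opening reference to the similarity $A_{\rho,k}=W^{-1}(P_{1,K}+\eta)W$ is not actually used and can be dropped.

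There is, however, a genuine gap in your argument for $(ii)$. If $\lambda_1\neq\lambda_2$ are both real eigenvalues, the normalized eigenfunctions $f_1,f_2$ satisfy \emph{different} first-order ODEs (namely $if_j'+\overline V f_j=\lambda_j f_j$), so ``subtracting'' does not produce an equation for $f_1-f_2$ from which $f_1\equiv f_2$ follows; the vanishing of $f_1-f_2$ at both endpoints is not enough, since $f_1-f_2$ solves no homogeneous linear ODE. You already have the key identity $k=-2iV_I f_j$ for $j=1,2$, hence $V_I(f_1-f_2)=0$ a.e.; what is missing is the observation (used in the paper) that, from the explicit solution of the adjoint ODE,
\[
f_2(x)=e^{i(\lambda_2-\lambda_1)(2\pi-x)}f_1(x),
\]
and since $f_1$ is nowhere zero this forces $V_I(x)\bigl(1-e^{i(\lambda_2-\lambda_1)(2\pi-x)}\bigr)=0$ a.e. The second factor vanishes only at finitely many points when $\lambda_1\neq\lambda_2$, so $V_I=0$ a.e., contradicting Hypothesis~\ref{hypo:4.3}. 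Without invoking $V_I>0$ on a set of positive measure, part $(ii)$ cannot be concluded.
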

	\begin{proof} Part $(i)$: Let $0\neq f\in\ker(A_{\rho,k}-\lambda)$, where $\lambda\in\mathbb{R}$. Assume that $m_a(\lambda,A_{\rho,k})\geq 2$. Then, there exists $g\in\mathcal{R}(\lambda,A_{\rho,k})$ be such that $(A_{\rho,k}-\lambda)g=f$. Using $(A_{\rho,k}-\lambda)f=(A_{\rho,k}^*-\lambda)f=0$, we then get
		\begin{equation}
			\|f\|^2=\langle f,f\rangle=\langle (A_{\rho,k}-\lambda)g,f\rangle=\langle g,(A_{\rho,k}^*-\lambda)f\rangle=0\:,
		\end{equation}
		which is a contradiction. Hence, we have $m_a(\lambda,A_{\rho,k})=1$.
		
		Part $(ii)$: Let $\lambda_1, \lambda_2\in\mathbb{R}$ with $\lambda_1\neq\lambda_2$. Suppose that $f_1, f_2\in\dom(A_{\rho,k})\cap\dom(A_{\rho,k}^*)$ are the eigenfunctions corresponding to $\lambda_1, \lambda_2$, respectively. It follows from a simple calculation that as the solutions to the eigenvalue equation $A_{\rho,k}^*f=if'+\overline{V}f=\lambda f$, the functions $f_1$ and $f_2$ are given by
		\begin{equation}
			f_j(x)=\exp\left(i\lambda_j(2\pi-x)-i\int_x^{2\pi}\overline{V(t)}dt\right)\quad\mbox{for}\quad j=1,2\:.
		\end{equation}
		Observe that $f_1(2\pi)=f_2(2\pi)=1$ and 
		\begin{equation} \label{eq:5.3}
			f_2(x)=e^{i(\lambda_2-\lambda_1)(2\pi-x)}f_1(x)\:.
		\end{equation}
		Now, since by Lemma \ref{lemma:5.1} we have
		\begin{equation}
			A_{\rho,k}f_j=if_j'+Vf_j+f_j(2\pi)k=if_j'+Vf_j+k=A_{\rho,k}^*f_j=if_j'+\overline{V}f_j \label{eq:5.4}
		\end{equation}
		for $j=1,2$, this implies
		\begin{equation}
			V_If_j=\frac{i}{2}k\quad\mbox{for}\quad j=1,2 \label{eq:5.5}
		\end{equation}
		and therefore $V_I(f_1-f_2)=0$. Using \eqref{eq:5.3}, this yields
		\begin{equation}
			V_I(x)(f_1(x)-f_2(x))=V_I(x)\left[1-e^{i(\lambda_1-\lambda_2)(2\pi-x)}\right]f_1(x)=0
		\end{equation}
		almost everywhere. Since $f_1(x)\neq 0$ for every $x\in(0,2\pi)$, $V_I(x)>0$ on a set of positive measure, and $\left[1-e^{i(\lambda_1-\lambda_2)(2\pi-x)}\right]=0$ for only finitely many points, this is a contradiction. Therefore $A_{\rho,k}$ has at most one real eigenvalue.
		
		Part $(iii)$: Let $\lambda\in\mathbb{R}$ and suppose there exist $(\rho_1,k_1)\neq(\rho_2,k_2)$ such that $\lambda\in\sigma(A_{\rho_1,k_1})\cap\sigma(A_{\rho_2,k_2})$. For $j=1,2$, let $f_j$ satisfy $A_{\rho_j,k_j}f_j=A_{\rho_j,k_j}^*f_j=\lambda f_j $. Since 
		\begin{equation}
			A_{\rho_j,k_j}^*f_j=if_j'+\overline{V}f_j=\lambda f_j\:,
		\end{equation}
		this means that $f_1$ and $f_2$ must satisfy the same first-order differential equation and therefore are equal (up to a multiplicative constant), which implies $\rho_1=\rho_2$. In what follows, let us choose $f_1$ and $f_2$ such that $f_1=f_2$. Arguing as in \eqref{eq:5.4} and \eqref{eq:5.5}, we get
		\begin{equation}
			\frac{i}{2}k_1=V_If_1=V_If_2=\frac{i}{2}k_2\:,
		\end{equation}
		and therefore $k_1=k_2$, which is a contradiction.
		
		Part $(iv)$: Let $\lambda$ be the real eigenvalue of $A_{\rho,k}$ and $g$ be a root vector corresponding to a non-real eigenvalue $\mu$. Then, there exists $j\in\mathbb{N}$ such that $(A_{\rho,k}-\mu)^jg=0$. Now, consider
		\begin{equation}
			(\lambda-\overline{\mu})^j\langle g,f\rangle=\langle g, (A_{\rho,k}-\overline{\mu})^jf\rangle=\langle g, (A^*_{\rho,k}-\overline{\mu})^jf\rangle=\langle (A_{\rho,k}-{\mu})^jg, f\rangle=0\:,
		\end{equation}
		which shows that $f\perp g$. This finishes the proof.
	\end{proof}
	
	Combining the results of Section \ref{sec:3} with the previous theorem, we have the following statement about $\sigma(A_{\rho,k})$:
	\begin{corollary} \label{thm:4.5}
		Let $A_{\rho,k}$ be a maximally dissipative extension of $A_{min}$, where $\rho\neq0$. We then have
		\begin{equation}
			\inf_{\mu\in\sigma(A_{\rho,k})\setminus\mathbb{R}}\Im \mu=:\zeta>0\:. \label{eq:4.15}
		\end{equation}
		Moreover, the root functions of $A_{\rho,k}$ form a Riesz basis of $L^2(0,2\pi)$.
	\end{corollary}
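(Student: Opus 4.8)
The plan is to deduce both assertions from the similarity $W^{-1}(P_{1,K}+\eta)W=A_{\rho,k}$ of Lemma \ref{lemma:3.4} (which requires $\rho\neq0$), combined with the spectral description of $P_{1,K}$ obtained in Theorems \ref{thm:3.14} and \ref{thm:3.20}. Since $W$ is bounded and boundedly invertible and translation by $\eta$ does not affect root spaces, conjugation gives $(A_{\rho,k}-\mu)^m=W^{-1}(P_{1,K}-(\mu-\eta))^mW$, hence $\sigma(A_{\rho,k})=\sigma(P_{1,K})+\eta$ and $\mathcal{R}(\mu,A_{\rho,k})=W^{-1}\mathcal{R}(\mu-\eta,P_{1,K})$ for each $\mu\in\sigma(A_{\rho,k})$. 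Applying the bounded, boundedly invertible operator $W^{-1}$ to the Riesz basis of root vectors of $P_{1,K}$ furnished by Theorem \ref{thm:3.20} therefore produces a Riesz basis of $L^2(0,2\pi)$ consisting of root vectors of $A_{\rho,k}$, which settles the ``moreover'' part.

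For the lower bound $\zeta>0$, I would first compute $\Im\eta$. From \eqref{eq:3.7} we get $\Im\eta=\tfrac{1}{2\pi}\bigl(\int_0^{2\pi}V_I(t)\,dt-\ln|\rho|\bigr)$. Maximal dissipativity forces $|\rho|\le 1$ by \eqref{eq:3.6a}, so $-\ln|\rho|\ge 0$, while Hypothesis \ref{hypo:4.3} gives $\int_0^{2\pi}V_I(t)\,dt>0$; hence $\Im\eta>0$. Next I would invoke Theorem \ref{thm:3.14}: for $|n|$ large, $P_{1,K}$ has exactly one eigenvalue $\lambda_n$ in the disk around $n$ of radius $|\langle\psi_n,K\rangle|$, and $|\langle\psi_n,K\rangle|\to0$ as $|n|\to\infty$ since $K\in L^2(0,2\pi)$. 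Consequently $\Im\lambda_n\to0$, so the eigenvalues $\lambda_n+\eta$ of $A_{\rho,k}$ satisfy $\Im(\lambda_n+\eta)\to\Im\eta>0$, and in particular only finitely many of them have imaginary part below $\tfrac{1}{2}\Im\eta$.

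It then remains to handle the finitely many eigenvalues of $A_{\rho,k}$ not covered above, all of which lie in the translated rectangle $\mathcal{R}_{N,b}+\eta$ of Theorem \ref{thm:3.14} and so form a finite set. Since $A_{\rho,k}$ is maximally dissipative, $\sigma(A_{\rho,k})\subseteq\{\mu\in\mathbb{C}\,:\,\Im\mu\ge0\}$, and by Theorem \ref{thm:5.2}(ii) at most one point of this finite set is real; each of the remaining ones is non-real with strictly positive imaginary part. Hence the set of non-real eigenvalues of $A_{\rho,k}$ with imaginary part $<\tfrac12\Im\eta$ is finite, and $\zeta=\inf_{\mu\in\sigma(A_{\rho,k})\setminus\mathbb{R}}\Im\mu$ is the minimum of finitely many strictly positive numbers (the tail being uniformly bounded below by $\tfrac12\Im\eta$), so $\zeta>0$.

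I do not anticipate a serious obstacle: the substantive work is already contained in Theorems \ref{thm:3.14}, \ref{thm:3.20} and \ref{thm:5.2}, and the only genuinely new input is the sign of $\Im\eta$, which is immediate from Hypothesis \ref{hypo:4.3} and \eqref{eq:3.6a}. The one point requiring a little care is the bookkeeping for the similarity transform, namely that conjugation by the bounded invertible multiplier $W$ together with the scalar shift by $\eta$ preserves the Riesz basis property of root vectors and merely translates the spectrum, so that the quantitative eigenvalue-location statements for $P_{1,K}$ transfer verbatim, up to the shift by $\eta$, to $A_{\rho,k}$.
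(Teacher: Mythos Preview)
Your proposal is correct and follows essentially the same approach as the paper: both proofs use the similarity of Lemma \ref{lemma:3.4} together with Theorem \ref{thm:3.20} for the Riesz basis assertion, and both compute $\Im\eta>0$ from Hypothesis \ref{hypo:4.3} and $|\rho|\le1$, then use Theorem \ref{thm:3.14} to trap all but finitely many eigenvalues near the line $\Im\mu=\Im\eta$. Your final paragraph is in fact a bit more explicit than the paper's (which simply notes that the remaining eigenvalues are finitely many); your appeal to Theorem \ref{thm:5.2}(ii) is harmless but not strictly needed, since the infimum in \eqref{eq:4.15} is taken over $\sigma(A_{\rho,k})\setminus\mathbb{R}$ and maximal dissipativity already forces $\Im\mu>0$ for every non-real eigenvalue.
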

	\begin{proof}
		Firstly note that since $A_{\rho,k}$ is maximally dissipative, all of its eigenvalues have nonnegative imaginary part.
		From Hypothesis \ref{hypo:4.3} it follows that $\int_0^{2\pi}V_I(t)dt>0$. Moreover, since $A_{\rho,k}$ is assumed to be maximally dissipative, this implies by Proposition \ref{thm:4.2} that $|\rho|\leq 1$. Consequently, the scalar $\eta$, which is defined in \eqref{eq:3.7}, has positive imaginary part. Since by Lemma \ref{lemma:3.4}, $\sigma(A_{\rho,k})=\sigma(P_{1,K}+\eta)$, Theorem \ref{thm:3.14} implies that for any $\varepsilon>0$, all but finitely many eigenvalues of $A_{\rho,k}$ can be found inside circles with center points $(n+\eta)$ and radius $\varepsilon$, where $n\in\mathbb{Z}$ such that $|n|$ is large enough. Choosing $\varepsilon$ small enough, this ensures that the imaginary parts of all these eigenvalues will be larger than $\Im \eta/2$. Since there are only finitely remaining additional eigenvalues, this yields \eqref{eq:4.15}.
		The fact that the root functions of $A_{\rho,k}$ form a Riesz basis follows from Lemma \ref{lemma:3.4}. Since $W^{-1}(P_{1,K}+\eta)W=A_{\rho,k}$, the root functions of $A_{\rho,k}$ are given by $\{W^{-1}\phi_n\}_{n\in\mathbb{Z}}$, where $\{\phi_n\}_{n\in\mathbb{Z}}$ are the root functions of $P_{1,K}$, which by Theorem \ref{thm:3.20} form a Riesz basis. But this means that the root functions of $A_{\rho,k}$ are the image of the Riesz basis $\{\phi_n\}_{n\in\mathbb{Z}}$ under the boundedly invertible operator $W^{-1}$ and therefore also a Riesz basis.
	\end{proof}
	\begin{theorem} \label{thm:4.9}
		For any $\lambda\in\mathbb{R}$ there exists a unique pair $\rho_\lambda, k_\rho$ such that $A_{\rho_\lambda, k_\lambda}$ is a maximally dissipative extension of $A_{min}$ with $\lambda\in\sigma(A_{\rho_\lambda,k_\lambda})$.
	\end{theorem}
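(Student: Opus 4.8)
The plan is to prove \emph{existence} by an explicit construction and to obtain \emph{uniqueness} for free: by Theorem \ref{thm:5.2}$(iii)$, at most one maximally dissipative extension of $A_{min}$ can have a prescribed $\lambda\in\mathbb{R}$ in its spectrum, so the whole matter reduces to exhibiting, for each $\lambda\in\mathbb{R}$, a maximally dissipative $A_{\rho_\lambda,k_\lambda}$ with $\lambda\in\sigma(A_{\rho_\lambda,k_\lambda})$.

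First I would reverse-engineer the candidate. If $A_{\rho,k}$ is maximally dissipative with real eigenvalue $\lambda$ and eigenfunction $f$, then Lemma \ref{lemma:5.1} forces $A_{\rho,k}^{*}f=\lambda f$, i.e.\ $if'+\overline{V}f=\lambda f$; normalizing $f(2\pi)=1$ pins down
\begin{equation}
	f_\lambda(x):=\exp\!\left(i\lambda(2\pi-x)-i\int_x^{2\pi}\overline{V(t)}\,dt\right)\in H^1(0,2\pi),
\end{equation}
which lies in $H^1(0,2\pi)$ since $V\in L^\infty(0,2\pi)$. The boundary condition $f_\lambda(0)=\rho f_\lambda(2\pi)$ then forces $\rho_\lambda:=f_\lambda(0)$ (note $\rho_\lambda\neq 0$, being an exponential), and subtracting $A_{\rho,k}^{*}f_\lambda=\lambda f_\lambda$ from $A_{\rho,k}f_\lambda=\lambda f_\lambda$ while using $f_\lambda(2\pi)=1$, exactly as in \eqref{eq:5.4}--\eqref{eq:5.5}, forces $k_\lambda:=-2iV_If_\lambda$. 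Going back, one checks directly that with these definitions $f_\lambda\in\dom(A_{\rho_\lambda,k_\lambda})$ and $A_{\rho_\lambda,k_\lambda}f_\lambda=\lambda f_\lambda$, so that $\lambda\in\sigma(A_{\rho_\lambda,k_\lambda})$ by Theorem \ref{thm:3.1}.

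Next I would verify maximal dissipativity via Proposition \ref{thm:4.2}. Membership $k_\lambda\in\ran(V_I^{1/2})$ is immediate since $k_\lambda=V_I^{1/2}(-2iV_I^{1/2}f_\lambda)$, and then $V_I^{-1/2}k_\lambda=-2iV_I^{1/2}f_\lambda$, so that the inequality \eqref{eq:3.6a} reads $1-|\rho_\lambda|^2\ge 2\int_0^{2\pi}V_I(x)|f_\lambda(x)|^2\,dx$. The key point is that, because $\lambda$ and $V_R$ are real, the modulus of $f_\lambda$ satisfies $|f_\lambda(x)|^2=\exp\!\big(-2\int_x^{2\pi}V_I(t)\,dt\big)$, whence $\tfrac{d}{dx}|f_\lambda(x)|^2=2V_I(x)|f_\lambda(x)|^2$, and the fundamental theorem of calculus yields
\begin{equation}
	2\int_0^{2\pi}V_I(x)|f_\lambda(x)|^2\,dx=|f_\lambda(2\pi)|^2-|f_\lambda(0)|^2=1-|\rho_\lambda|^2 .
\end{equation}
Thus \eqref{eq:3.6a} holds with \emph{equality}; in particular $|\rho_\lambda|<1$ by Hypothesis \ref{hypo:4.3}, and $A_{\rho_\lambda,k_\lambda}$ is maximally dissipative, lying exactly on the boundary of the family described in Proposition \ref{thm:4.2}. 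Combined with Theorem \ref{thm:5.2}$(iii)$ this gives existence and uniqueness.

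I do not expect a genuine obstacle here: once Sz.-Nagy's lemma (Lemma \ref{lemma:5.1}) is invoked, the construction of $f_\lambda$, $\rho_\lambda$ and $k_\lambda$ is essentially forced. The only point requiring care is the bookkeeping of conjugates and signs in the exponent of $f_\lambda$ — this is precisely what makes the dissipativity condition \eqref{eq:3.6a} collapse to an exact equality rather than a strict inequality, and getting it right (so that exactly $V_I$ survives in $|f_\lambda|^2$) is the crux of the verification.
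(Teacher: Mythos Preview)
Your proof is correct and follows essentially the same approach as the paper: the paper normalizes the eigenfunction as $g_\lambda(2\pi)=-2i$, sets $\rho_\lambda=\tfrac{i}{2}g_\lambda(0)$ and $k_\lambda=V_Ig_\lambda$, which after the rescaling $g_\lambda=-2if_\lambda$ is exactly your $\rho_\lambda=f_\lambda(0)$, $k_\lambda=-2iV_If_\lambda$. The one place you go beyond the paper is in spelling out the verification of \eqref{eq:3.6a} via $|f_\lambda(x)|^2=\exp\!\big(-2\int_x^{2\pi}V_I\big)$ and the fundamental theorem of calculus; the paper leaves this as ``a direct calculation''.
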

	\begin{proof} We prove this by an explicit construction. To this end, let $g_\lambda$ be the solution to the following initial value problem
		\begin{align}
			ig_\lambda'+\overline{V}g_\lambda&=\lambda g_\lambda \label{eq:5.8}\\ 
			g_\lambda(2\pi)&=-2i\:. 
		\end{align}
		We will show that the operator $A_{\rho_\lambda,k_\lambda}$ with the choice $\rho_\lambda:=\frac{i}{2}g_\lambda(0)$ and $k_\lambda:=V_Ig_\lambda$ is a maximally dissipative extension of $A_{min}$ with a real eigenvalue at $\lambda$ and $g_\lambda$ being the corresponding eigenvector. 
		
		Note that the solution to \eqref{eq:5.8} is given by 
		\begin{equation}
			g_\lambda(x)=\frac{2}{i}\exp\left(-i\int_x^{2\pi}\overline{V(t)}dt+(2\pi-x)i\lambda\right)
		\end{equation}
		and from a direct calculation, it can be verified that $1-|\rho_\lambda|^2=\tfrac{1}{2}\|V_I^{-1/2}k_\lambda\|^2$, which implies by Proposition \ref{thm:4.2}, Condition \eqref{eq:3.6a} that $A_{\rho_\lambda,k_\lambda}$ is maximally dissipative.
		
		Since $g_\lambda(0)=\frac{2}{i}\rho_\lambda=\rho_\lambda g_\lambda(2\pi)$, this implies that $g_\lambda\in\dom(A_{\rho_\lambda,k_\lambda})$. We now compute $A_{\rho_\lambda,k_\lambda}g_\lambda$:
		\begin{equation}
			A_{\rho_\lambda,k_\lambda}g_\lambda=ig_\lambda'+Vg_\lambda+g_\lambda(2\pi)k_\lambda=ig_\lambda'+(V_R+iV_I)g_\lambda-2iV_Ig_\lambda=ig_\lambda'+\overline{V}g_\lambda=\lambda g_\lambda\:,
		\end{equation}
		where the last equality follows from \eqref{eq:5.8}. This shows that $g_\lambda$ is an eigenfunction of $A_{\rho_\lambda,k_\lambda}$ corresponding to the eigenvalue $\lambda$. The uniqueness of $\rho_\lambda, k_\lambda$ follows from Theorem \ref{thm:5.2}, part $(iii)$.
	\end{proof}
	\subsection{Long-time behavior of the semigroup generated by $iA_{\rho,k}$}
	We finish this work with an investigation of the long-term behavior of the semigroup of contractions generated by $iA_{\rho,k}$. Depending on whether $\sigma(A_{\rho,k})$ contains a real eigenvalue or not, we observe two different types of behaviors:
	\begin{theorem} \label{thm:4.10}
		Let $\rho\neq0$ and $k\in\ran(V_I^{1/2})$ be such that $A_{\rho,k}$ is maximally dissipative and let $\left(e^{iA_{\rho,k}t}\right)_{t\geq 0}$ be the semigroup generated by $A_{\rho,k}$. Then, we have the following two cases:
		\begin{itemize}
			\item[(i)] If $\sigma(A_{\rho,k})\cap\mathbb{R}=\emptyset$, then $\lim_{t\rightarrow\infty}\left\|e^{iA_{\rho,k}t}\right\|=0$.
			\item[(ii)] If $\sigma(A_{\rho,k})\cap\mathbb{R}=\{\lambda\}$, then $\lim_{t\rightarrow\infty}\left\|e^{iA_{\rho,k}t}-e^{i\lambda t}P_{\ker(A_{\rho,k}-\lambda)}\right\|=0$.
		\end{itemize}
	\end{theorem}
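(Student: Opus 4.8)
The plan is to push the similarity/diagonalization philosophy of the paper one step further: use the Riesz basis of root functions from Corollary~\ref{thm:4.5} to put $A_{\rho,k}$ into a block‑diagonal normal form on which the semigroup can be written down explicitly, and then read off the decay block by block. First I would recall that, $A_{\rho,k}$ being maximally dissipative, $iA_{\rho,k}$ generates a strongly continuous contraction semigroup $\bigl(e^{iA_{\rho,k}t}\bigr)_{t\ge0}$ (cf.\ \cite{P59}), so the statement is meaningful and $\|e^{iA_{\rho,k}t}\|\le1$. Write $\sigma(A_{\rho,k})=\{\mu_j\}_j$, put $d_j:=m_a(\mu_j,A_{\rho,k})$, and let $P_{\mu_j}$ be the spectral projection onto the root space $\mathcal R(\mu_j,A_{\rho,k})$ coming from the Riesz basis of root functions (equivalently, the Riesz contour projection used in the proof of Lemma~\ref{lemma:3.19}). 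By Corollary~\ref{thm:4.5} the root functions form a Riesz basis of $L^2(0,2\pi)$; since this basis is a union of bases of the root spaces and -- by Theorem~\ref{thm:3.14} transported through Lemma~\ref{lemma:3.4} -- only finitely many $d_j$ exceed $1$, standard Riesz‑basis theory shows that $\{\mathcal R(\mu_j,A_{\rho,k})\}_j$ is a Riesz basis of (finite‑dimensional) subspaces: there are constants $0<c\le C<\infty$ with $c\sum_j\|P_{\mu_j}g\|^2\le\|g\|^2\le C\sum_j\|P_{\mu_j}g\|^2$ for all $g\in L^2(0,2\pi)$.

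Each $P_{\mu_j}$ commutes with the semigroup ($\mathcal R(\mu_j,A_{\rho,k})$ and its complement being $A_{\rho,k}$‑invariant), and on the finite‑dimensional invariant subspace $\mathcal R(\mu_j,A_{\rho,k})$ the operator $A_{\rho,k}$ acts as $\mu_j\,I+N_j$ with $N_j$ nilpotent of index $d_j$, whence
\[
e^{iA_{\rho,k}t}\big|_{\mathcal R(\mu_j,A_{\rho,k})}=e^{i\mu_j t}\sum_{\ell=0}^{d_j-1}\frac{(it)^\ell}{\ell!}\,N_j^\ell.
\]
Since $d_j=1$ for all but finitely many $j$, there is a single polynomial $q$ with $q\ge1$ on $[0,\infty)$ such that $\bigl\|e^{iA_{\rho,k}t}P_{\mu_j}\bigr\|\le e^{-t\,\Im\mu_j}\,q(t)$ for all $j$ and all $t\ge0$.

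For $(i)$: if $\sigma(A_{\rho,k})\cap\mathbb R=\emptyset$ then $\Im\mu_j\ge\zeta>0$ for every $j$, where $\zeta$ is as in Corollary~\ref{thm:4.5}. Expanding $g=\sum_jP_{\mu_j}g$ and combining the upper Riesz bound with the block estimate gives $\|e^{iA_{\rho,k}t}g\|^2\le C\,e^{-2t\zeta}q(t)^2\sum_j\|P_{\mu_j}g\|^2\le (C/c)\,e^{-2t\zeta}q(t)^2\|g\|^2$, so $\|e^{iA_{\rho,k}t}\|\le\sqrt{C/c}\;e^{-t\zeta}q(t)\to0$. For $(ii)$: let $\lambda$ be the unique real eigenvalue. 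By Theorem~\ref{thm:5.2}$(i)$, $d_\lambda=1$, so $\mathcal R(\lambda,A_{\rho,k})=\ker(A_{\rho,k}-\lambda)$ and $e^{iA_{\rho,k}t}P_\lambda=e^{i\lambda t}P_\lambda$; and by Theorem~\ref{thm:5.2}$(iv)$ the subspace $\ker(A_{\rho,k}-\lambda)$ is orthogonal to every other root space, hence to their closed span, so the complementary pair $\ran(P_\lambda)=\ker(A_{\rho,k}-\lambda)$ and $\ker(P_\lambda)$ is orthogonal and therefore $P_\lambda=P_{\ker(A_{\rho,k}-\lambda)}$. Then $e^{iA_{\rho,k}t}g-e^{i\lambda t}P_{\ker(A_{\rho,k}-\lambda)}g=e^{iA_{\rho,k}t}(I-P_\lambda)g=\sum_{\mu_j\neq\lambda}e^{iA_{\rho,k}t}P_{\mu_j}g$, and since every $\mu_j\neq\lambda$ has $\Im\mu_j\ge\zeta>0$, applying the estimate from $(i)$ to $(I-P_\lambda)g$ yields $\bigl\|e^{iA_{\rho,k}t}-e^{i\lambda t}P_{\ker(A_{\rho,k}-\lambda)}\bigr\|\le\sqrt{C/c}\;e^{-t\zeta}q(t)\to0$.

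I expect the main obstacle to be the second step rather than any computation: one has to verify carefully that the root‑space decomposition really is a Riesz basis of subspaces with uniform frame constants $c,C$, and that a single polynomial $q$ dominates every block -- both resting on Corollary~\ref{thm:4.5} together with Theorem~\ref{thm:3.14}, which guarantees that only finitely many Jordan blocks are nontrivial. In part $(ii)$ the other nontrivial ingredient is the identification of the (a priori oblique) Riesz projection $P_\lambda$ with the orthogonal projection $P_{\ker(A_{\rho,k}-\lambda)}$, which is exactly what Theorem~\ref{thm:5.2}$(iv)$ provides.
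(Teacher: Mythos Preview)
Your proposal is correct and follows essentially the same route as the paper: both arguments rest on the Riesz basis of root functions (Corollary~\ref{thm:4.5}), the spectral gap $\zeta>0$ from the same corollary, the fact that only finitely many Jordan blocks are nontrivial (Theorem~\ref{thm:3.14} via Lemma~\ref{lemma:3.4}), and, for part~$(ii)$, the orthogonality statement of Theorem~\ref{thm:5.2}$(iv)$. The only difference is presentational --- the paper expands directly in the root vectors and uses the isomorphism $S:\phi_n\mapsto\psi_n$ to an orthonormal basis (so your constants $c,C$ appear as $\|S\|^{-2},\|S^{-1}\|^2$), whereas you phrase the same estimate via a Riesz basis of root subspaces and their spectral projections; your formulation is arguably a bit cleaner on the nilpotent blocks, but the substance is identical.
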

	\begin{proof}
		Firstly, note that by Theorem \ref{thm:5.2}, part (iii), only the two cases described above are possible.
		
		Part $(i)$: If $\sigma(A_{\rho,k})\cap\mathbb{R}=\emptyset$, then it follows from Corollary \ref{thm:4.5} that $\inf_{\mu\in\sigma(A_{\rho,k})}\Im \mu=:\zeta>0$. Moreover, Corollary \ref{thm:4.5} also implies that the root functions $\{\phi_n\}_{n\in\mathbb{Z}}$ of $A_{\rho,k}$ form a Riesz basis of $L^2(0,2\pi)$. We may therefore expand any $f\in L^2(0,2\pi)$ in this basis: $f=\sum_{n\in\mathbb{Z}}c_n\phi_n$. For the time evolution, we then get
		\begin{align}
			\left\|e^{iA_{\rho,k}t}f\right\|^2=\left\|e^{iA_{\rho,k}t}\sum_{n\in\mathbb{Z}}c_n\phi_n\right\|^2=\left\|\sum_{n\in\mathbb{Z}}p_n(t)e^{i\mu_n t}c_n\phi_n\right\|^2\:, \label{eq:6.1}
		\end{align}
		where $\mu_n$ is the eigenvalue corresponding to the root function $\phi_n$ and $p_n(t)$ is a polynomial of degree less than or equal to $(m_a(\mu_n,A_{\rho,k})-1)$. In particular, if $m_a(\mu_n,A_{\rho,k})=1$, this implies $p_n(t)\equiv 1$. Since by Theorem \ref{thm:3.14} there are at most finitely many eigenvalues of $A_{\rho,k}$ with algebraic multiplicity greater than $1$, and therefore only finitely many $n\in\mathbb{Z}$, for which $p_n(t)$ is a non-constant polynomial, there is a large enough $T>0$ such that $\sup_{n\in\mathbb{Z}}|p_n(t)|\leq e^{\zeta t/2}$ for all $t\geq T$ and therefore
		\begin{align}
			|p_n(t)e^{i\mu_n t}|=|p_n(t)|e^{-\Im\mu_n t}\leq e^{\zeta t/2}e^{-\zeta t}=e^{-\zeta t/2} \label{eq:6.2}
		\end{align}
		for all $n\in\mathbb{Z}$ and all $t\geq T$. Since $\{\phi_n\}_{n\in\mathbb{Z}}$ is a Riesz basis, the operator $S: L^2(0,2\pi)\rightarrow L^2(0,2\pi)$, $\phi_n\mapsto \psi_n$ and then extended by linearity, where $\psi_n(x)=(2\pi)^{-1/2}e^{-inx}$, is boundedly invertible. Assuming $t\geq T$, we therefore continue
		\begin{align}
			&\eqref{eq:6.1}=\left\|\sum_{n\in\mathbb{Z}}p_n(t)e^{i\mu_n t}c_nS^{-1}\psi_n\right\|^2\leq \|S^{-1}\|^2\left\|\sum_{n\in\mathbb{Z}}p_n(t)e^{i\mu_n t}c_n\psi_n\right\|^2\\ &=\|S^{-1}\|^2\sum_{n\in\mathbb{Z}}|p_n(t)|^2e^{-2\Im\mu_n t}|c_n|^2\|\psi_n\|^2\overset{\eqref{eq:6.2}}{\leq}\|S^{-1}\|^2e^{-\zeta t}\sum_{n\in\mathbb{Z}}|c_n|^2\|\psi_n\|^2\\&= \|S^{-1}\|^2e^{-\zeta t}\left\|\sum_{n\in\mathbb{Z}}c_n\psi_n\right\|^2=\|S^{-1}\|^2e^{-\zeta t}\left\|\sum_{n\in\mathbb{Z}}c_nS\phi_n\right\|^2\\&\leq \|S^{-1}\|^2\|S\|^2e^{-\zeta t}\|f\|^2\:.
		\end{align}
		For $f\neq 0$ and $t\geq T$, this implies 
		\begin{equation}
			\frac{\left\|e^{iA_{\rho,k}t}f\right\|}{\|f\|}\leq \|S^{-1}\|\|S\|e^{-\zeta t/2}
		\end{equation}
		and therefore $\lim_{t\rightarrow \infty}\left\|e^{iA_{\rho,k}t}\right\|=0$ as claimed.
		
		Part $(ii)$: Now assume that there is a real number $\mu_0$ in $\sigma(A_{\rho,k})$ with $\phi_0$ being the corresponding eigenvector. By Theorem \ref{thm:5.2}, part $(i)$, the algebraic multiplicity of $\mu_0$ is equal to $1$ and therefore we have $e^{iA_{\rho,k}t}\phi_0=e^{i\mu_0t}\phi_0$. Moreover, by Theorem \ref{thm:5.2}, part $(iv)$, all other root vectors $\{\phi_n\}_{n\neq 0}$ are orthogonal to $\phi_0$ and thus $P_{\ker(A_{\rho,k}-\mu_0)}\phi_n=0$ for $n\neq 0$. We therefore get
		\begin{align}
			&\left\|(e^{iA_{\rho,k}t}-e^{i\mu_0 t}P_{\ker(A_{\rho,k}-\mu_0)})f\right\|^2\\=&\left\|\left(e^{iA_{\rho,k}t}-e^{i\mu_0t}\right)c_0\phi_0\right\|^2+\left\|e^{iA_{\rho,k}t}\sum_{n\neq 0} c_n\phi_n\right\|^2=\left\|e^{iA_{\rho,k}t}\sum_{n\neq 0} c_n\phi_n\right\|^2\:.
		\end{align}
		Since, again by Corollary \ref{thm:4.5}, we have $\inf_{n\neq 0}\Im\mu_n=:\zeta>0$, we now may mimic the argument given in Part $(i)$ of this proof and conclude that
		\begin{equation}
			\lim_{t\rightarrow\infty}\left\|e^{iA_{\rho,k}t}-e^{i\mu_0t}P_{\ker(A_{\rho,k}-\mu_0)}\right\|=0\:,
		\end{equation}
		which finishes the proof.
	\end{proof}
	%%%%%%%%%%%%%%%%%%%%%%%%%%%%%%%%%%%%%%%
	\noindent {\bf Acknowledgments.}
	%%%%%%%%%%%%%%%%%%%%%%%%%%%%%%%%%%%%%%%
	This paper is the result of an undergraduate summer research project supervised by the first author. We gratefully acknowledge the financial support of the Baylor math department which made this possible, especially Prof.\ Dorina Mitrea's encouragement and support. We appreciate Prof.\ Anton Baranov and Prof.\ Dmitry Yakubovich's feedback concerning their paper \cite{BY16}. We also thank Prof.\ Fritz Gesztesy for valuable feedback. 
	
	%%%%%%%%%%%%%%%%%%%%%%%%%%%%%%%%%%%%%%%
	
\end{document}